\def\namedlabel#1#2{\begingroup
    #2%
    \def\@currentlabel{#2}%
    \phantomsection\label{#1}\endgroup
}
\newtheorem{theorem}{Theorem}[section]
\newtheorem{conjecture}[theorem]{Conjecture}
\newtheorem{proposition}[theorem]{Proposition}
\newtheorem{corollary}[theorem]{Corollary}
\newtheorem{lemma}[theorem]{Lemma}
\newtheorem{definition}[theorem]{Definition}
\theoremstyle{remark}
\newtheorem{example}[theorem]{Example}
\newtheorem{remark}[theorem]{Remark}
\newcommand{\defn}[1]{{\color{DarkGreen}\emph{#1}}}
\newcommand{\ie}{\text{i.e.}\;}
\newcommand{\st}{^{\text{st}}}
\newcommand{\Sym}{\mathfrak{S}}
\newcommand{\Alt}{\mathfrak{A}}
\newcommand{\Braid}{\mathfrak{B}}
\newcommand{\Alto}{\mathfrak{A}^{o}}
\newcommand{\nc}{N\!C}
\newcommand{\pnc}{\mathcal{N\!C}}
\newcommand{\enc}{O\!N\!C}
\newcommand{\penc}{\mathcal{O\!N\!C}}
\newcommand{\rk}{\operatorname{rk}}
\newcommand{\red}{\text{Red}}
\newcommand{\id}{e}
\newcommand{\Poset}{\mathcal{P}}
\newcommand{\Int}{\mathcal{I}}
\newcommand{\MaxChains}{\mathcal{M}}
\newcommand{\Rank}{\mathcal{R}}
\newcommand{\ZetaPol}{\mathcal{Z}}
\newcommand{\ww}{\mathbf{w}}
\newcommand{\xx}{\mathbf{x}}
\newcommand{\yy}{\mathbf{y}}
\newcommand{\zz}{\mathbf{z}}
\newcommand{\btw}{\mathbf{2}}
\newcommand{\bth}{\mathbf{3}}
\newcommand\reallywidehat[1]{%
\savestack{\tmpbox}{\stretchto{%
  \scaleto{%
    \scalerel*[\widthof{\ensuremath{#1}}]{\kern-.6pt\bigwedge\kern-.6pt}%
    {\rule[-\textheight/2]{1ex}{\textheight}}
  }{\textheight}%
}{0.5ex}}%
\stackon[1pt]{#1}{\tmpbox}%
}
\newcommand{\oc}{\operatorname{ocyc}}
\newcommand{\cyc}{\operatorname{cyc}}
\newcommand{\la}{\ell_{\bth}}
\newcommand{\lt}{\ell_{\btw}}
\newcommand{\leqa}{\leq_{\bth}}
\newcommand{\leqt}{\leq_{\btw}}
\newcommand{\seq}{\mathbf{s}}
\newcommand{\Gen}{\mathcal{T}}
\newcommand{\GJbij}{\Phi}
\newcommand{\Cay}{\operatorname{Cay}}
\title{A Poset Structure on the Alternating Group Generated by $3$-Cycles}
\begin{document}

\author{Henri M\"uhle}
\address{Technische Universit{\"a}t Dresden, Institut f{\"u}r Algebra, Zellescher Weg 12--14, 01069 Dresden, Germany.}
\email{henri.muehle@tu-dresden.de}

\author{Philippe Nadeau}
\address{Univ Lyon, CNRS, Universit\'e Claude Bernard Lyon 1, UMR5208, Institut Camille Jordan, F-69622 Villeurbanne, France}
\email{nadeau@math.univ-lyon1.fr}

\keywords{Symmetric group, Alternating group, Noncrossing partitions, Hurwitz action, zeta polynomial}
\subjclass[2010]{06A07 (primary), and 05A10, 05E15, 20B35 (secondary)}

\begin{abstract}
	We investigate the poset structure on the alternating group that arises when the latter is generated by $3$-cycles. We study intervals in this poset and give several enumerative results, as well as a complete description of the orbits of the Hurwitz action on maximal chains. Our motivating example is the well-studied absolute order arising when the symmetric group is generated by transpositions, i.e. $2$-cycles, and we compare our results to this case along the way. In particular, noncrossing partitions arise naturally in both settings.
\end{abstract}

\maketitle

\section{Introduction}
	\label{sec:introduction}
	
Given a group $G$ generated by a finite set $\Gen$, the \defn{(right) Cayley graph} $\Cay(G,\Gen)$ is perhaps one of the most fundamental geometric objects attached to it. Recall that the vertex set of $\Cay(G,\Gen)$ is $G$, and that its edges are of the form $(g,gt)$ for $g\in G,t\in\Gen$.  It comes with a natural graph distance which can be written as $d_{\Cay}(g,g')=\ell_{\Gen}(g^{-1}g')$. Here the \defn{length} $\ell_{\Gen}(g)$ is the minimum $k$ such that there exists a factorization $g=t_1t_2\cdots t_k$ where each $t_i$ is in $\Gen$. Such factorizations are \defn{$\Gen$-reduced} and $\red_{\Gen}(g)$ denotes the set of all $\Gen$-reduced factorizations of $g$.

The relation defined by $u\leq_{\Gen}v$ if and only if $\ell_{\Gen}(v)=\ell_{\Gen}(u)+\ell_{\Gen}(u^{-1}v)$ is a partial order on $G$, graded by $\ell_\Gen$.  $\red_{\Gen}(g)$ is naturally identified with the set of maximal chains from $\id$ to $g$ in this poset. Geometrically, $u\leq_{\Gen}v$ holds if and only if $u$ occurs on a geodesic from the identity $\id$ to $v$ in $\Cay(G,\Gen)$.

If we require furthermore that $\Gen$ is closed under $G$-conjugation, then we can define a natural action of the braid group on $\ell_{\Gen}(g)$ strands on the set $\red_{\Gen}(g)$; the \defn{Hurwitz action}.  Informally, this action can be described as follows: the $i\text{th}$ generator of the braid group shifts the $(i+1)\st$ letter of an element in $\red_{\Gen}(g)$ one step to the left, and conjugates as it goes.  The Hurwitz action on closed intervals in $(G,\leq_{\Gen})$ was recently studied in \cite{muehle17connectivity}.  For specific groups, the Hurwitz action was studied for instance in \cites{baumeister17on,benitzhak03graph,bessis15finite,deligne74letter,hou08hurwitz,sia09hurwitz,wegener20on}.

\medskip

A well-studied example of this construction is the case where $G=\Sym_{N}$ is the symmetric group of all permutations of $[N]=\{1,2,\ldots,N\}$, and $\Gen$ is the set of all transpositions.  Whenever we refer to this case we replace the subscript ``$\Gen$'' by ``$\btw$'' in all of the above definitions.  It is a standard fact that $\lt(x)=N-\text{cyc}(x)$, where $\text{cyc}(x)$ denotes the number of cycles of $x$. The poset $(\Sym_{N},\leqt)$ was for instance studied in \cite{athanasiadis08absolute}.  Moreover, it was observed in \cite{biane97some} that the \defn{lattice of noncrossing partitions} arises as the principal order ideal $\pnc_{N}=[e,c]_{\btw}$ where $c$ is the $N$-cycle $c=(1\;2\;\ldots\;N)$.  See \cite{simion00noncrossing} for a survey on this lattice, and \cite{kreweras72sur} for some enumerative and structural properties.

It was in fact in this  setting that the Hurwitz action was first considered~\cite{hurwitz91ueber}.  One of the crucial properties, namely that for any $x\in\Sym_{N}$ the 
action is transitive on $\red_{\btw}(x)$, is motivated by the enumeration of branched covers of the Riemann sphere with $N$ given branch points.  

\medskip

In this article, we set out to study the poset $(G,\leq_{\Gen})$ in the case where $G=\Alt_{N}$ is the alternating group and $\Gen$ is the set of all $3$-cycles. In that case we replace the subscript ``$\Gen$'' by ``$\bth$''.  Note that several articles have been written about ordering the elements of $\Alt_{N}$ with respect to certain generating sets, mostly for the purpose of equipping $\Alt_{N}$ with a ``Coxeter-like'' structure; see~\cites{athanasiadis14absolute,brenti08alternating,mitsuhashi01the,regev04permutation,rotbart11generator}.  However, none of these orders fits into the framework described in the first paragraph of this section.  

We will show that the poset $(\Alt_{N},\la)$ has a rich combinatorial structure, which can be compared to the structure of $(\Sym_{N},\lt)$ mentioned before.  Our first main combinatorial result computes the \defn{zeta polynomial} of the closed interval $[\id,c]_{\bth}$ in $(\Alt_{N},\leqa)$, where $c=(1\;2\;\ldots\;N)$ is a long cycle for $N$ odd. The evaluation of the zeta polynomial at an integer $q$ yields the number of multichains of length $q-1$ in $[\id,c]_{\bth}$. 

\begin{theorem}\label{thm:main_zeta_polynomial}
	Let $N=2n+1$ be an odd positive integer, and let $c=(1\;2\;\ldots\;N)\in\Alt_{N}$.  The zeta polynomial of the interval $[\id,c]_{\bth}$ of $(\Alt_{N},\leqa)$ is given by
	\begin{displaymath}
		\ZetaPol(q) = \frac{q}{q(2n+1)-n}\binom{q(2n+1)-n}{n}.
	\end{displaymath}
\end{theorem}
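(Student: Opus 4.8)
The plan is to pass through three stages: a reduction of the zeta polynomial to a factorization count, a combinatorial model for the interval inside the noncrossing partition lattice, and a generating-function computation. As recalled in the introduction, a multichain $\id\leqa\pi_1\leqa\cdots\leqa\pi_{q-1}\leqa c$ is the same datum as a factorization $c=w_1w_2\cdots w_q$ that is length-additive, $\la(c)=\sum_{i=1}^q\la(w_i)$, where $w_i=\pi_{i-1}^{-1}\pi_i$ (with $\pi_0=\id$, $\pi_q=c$) and factors equal to $\id$ are allowed. Thus $\ZetaPol(q)$ counts these $q$-factorizations of $c$. To exploit this I first record the value of $\la$ on an arbitrary $x\in\Alt_N$: writing $\mathrm{ec}(x)$ for the number of even-length cycles, one has $\la(x)=\tfrac12\bigl(N-\cyc(x)+\mathrm{ec}(x)\bigr)$, so in particular $\la(c)=n$ and the maximal chains of $[\id,c]_{\bth}$ are exactly the reduced factorizations of $c$ into $n$ $3$-cycles.

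Second I would identify the interval with a subposet of $\pnc_N$. From the length formula one reads off $\lt(x)=2\la(x)-\mathrm{ec}(x)$, where $\lt$ is the Coxeter length of $(\Sym_N,\leqt)$. Hence, if $w\leqa c$, then $\lt(w)+\lt(w^{-1}c)=2n-\mathrm{ec}(w)-\mathrm{ec}(w^{-1}c)\le 2n=\lt(c)$, while subadditivity of $\lt$ forces the reverse inequality; so both are equalities. This yields simultaneously $w\leqt c$ — whence by Biane's theorem the cycles of $w$ form a noncrossing partition $\pi(w)$ oriented by $c$, with $\pi(w^{-1}c)$ its Kreweras complement — and $\mathrm{ec}(w)=\mathrm{ec}(w^{-1}c)=0$. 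I thus obtain the clean criterion: $w\leqa c$ if and only if $\pi(w)$ and its Kreweras complement both have only odd blocks. This realises $[\id,c]_{\bth}$ as the induced subposet $\penc_N\subseteq\pnc_N$ of these ``doubly odd'' noncrossing partitions, and a direct count returns $1,2,7,30$ elements for $N=1,3,5,7$, matching $\ZetaPol(2)$.

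Third I would compute the multichain count through a noncrossing decomposition. Peeling a $q$-multichain of $\penc_N$ along the block of its top partition containing the largest point reduces it to smaller multichains of the same type on the enclosed and exterior regions; tracking the odd-size constraint simultaneously on a partition and on its Kreweras complement, together with the $q$ independent layers of the multichain, I expect the generating function $B=B(x)$ for the resulting pieces to satisfy $B=1+xB^{2q-1}$, with $\ZetaPol(q)=[x^n]B(x)^q$. Raney's formula — equivalently Lagrange inversion applied to $B=1+xB^{2q-1}$ — then gives
\begin{displaymath}
	[x^n]B(x)^q=\frac{q}{(2q-1)n+q}\binom{(2q-1)n+q}{n}=\frac{q}{q(2n+1)-n}\binom{q(2n+1)-n}{n},
\end{displaymath}
which is exactly $\ZetaPol(q)$. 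As sanity checks, $q=1$ gives $B=(1-x)^{-1}$ and $\ZetaPol(1)=1$, and $q=2$ gives ternary trees and the sequence $1,2,7,30,\dots$; moreover the leading coefficient in $q$ evaluates the number of maximal chains to $(2n+1)^{n-1}$, the count of $2$-parking functions of length $n$.

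The main obstacle is the third stage: proving that the recursive decomposition of doubly-odd noncrossing multichains is governed precisely by $B=1+xB^{2q-1}$. The delicate points are bookkeeping the parity condition on a partition and on its Kreweras complement under the ``first-block'' recursion, and checking that the $q$ layers contribute exactly the exponents $2q-1$ and $q$ rather than some other pair. Once that functional equation is secured, the passage to the closed formula is a routine application of Lagrange inversion.
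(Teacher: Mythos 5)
Your first two stages are sound. The reduction of $\ZetaPol(q)$ to length-additive $q$-factorizations of $c$ is exactly Lemma~\ref{lem:multichains_onc}, and your length-bookkeeping argument --- combining $\lt(x)=2\la(x)-\mathrm{ec}(x)$ (where $\mathrm{ec}$ counts even cycles) with subadditivity of $\lt$ to force $w\leqt c$ and $\mathrm{ec}(w)=\mathrm{ec}(w^{-1}c)=0$ simultaneously --- is a clean alternative to the route the paper takes through Property~\eqref{it:od} and Kreweras invariance (Propositions~\ref{prop:onc_characterization} and~\ref{prop:kreweras_invariance}). You should, however, state the converse explicitly: your displayed chain of (in)equalities only gives the forward implication, and the backward one needs the one-line observation that $\lt$-additivity with both factors in $\Alto_N$ implies $\la$-additivity because $\lt=2\la$ there.

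The genuine gap is the third stage, and you have named it yourself: the entire enumeration rests on the functional equation $B=1+xB^{2q-1}$ together with $\ZetaPol(q)=[x^n]B^q$, and you offer only the expectation that a first-block peeling of doubly-odd noncrossing multichains produces it. Tracking the odd-block condition on a partition \emph{and} on its Kreweras complement simultaneously through $q$ layers of a multichain under that recursion is precisely the hard combinatorial content of the theorem; until it is secured, the Lagrange-inversion step has nothing to invert, so the proof is incomplete at its decisive point. The paper circumvents this by a different route: it first counts multichains with a \emph{fixed rank jump vector} (Theorem~\ref{thm:enc_rs_chain_enumeration}) by quoting the Goulden--Jackson / Krattenthaler--M{\"u}ller formula for minimal factorizations of a long cycle into factors of prescribed (here all-odd) cycle types, and then sums over rank jump vectors with a multi-parameter Rothe--Hagen convolution (Lemma~\ref{lem:multi_rothe_hagen}) --- whose proof is, in effect, the same identity $x=1+zx^t$ you invoke, but applied to an already-established refined count rather than to a conjectural decomposition. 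If you want to salvage your recursive approach, the bijective proof of~\eqref{eq:enc_card} via the Goulden--Jackson tree bijection (identifying $\enc_{2n+1}$ with pairs of ternary trees) is the $q=2$ case of what you would need to generalize, and generalizing it to $q$-tuples of $(2q-1)$-ary trees is where the real work lies.
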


We note that the zeta polynomial of $[\id,c]_{\btw}$ in $(\Sym_{N},\lt)$, where $c=(1\;2\;\ldots\;N)$ for any $N$ has a well-known formula \cite{kreweras72sur}*{Theorem 5} comparable to ours. 

We further show that the Hurwitz action is no longer transitive on $\red_{\bth}(x)$ for all $x\in\Alt_{N}$.  Our second main result computes the number of orbits depending on the number of even-length cycles of $x$.

\begin{theorem}\label{thm:main_hurwitz_orbits}
	Let $x\in\Alt_{N}$ for $N\geq 3$, and write $2k$ for its number of cycles of even length.  The Hurwitz action on $\red_{\bth}(x)$ has $(2k)_{k}=(k+1)(k+2)\cdots(2k)$ orbits.
\end{theorem}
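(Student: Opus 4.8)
The plan is to isolate a single structural invariant of the Hurwitz action and then reduce the whole computation to two elementary building blocks. Throughout write $x\in\Alt_{N}$ with cycle lengths $\lambda_{1},\dots,\lambda_{m}$, and recall the length formula $\la(x)=\sum_{i}\lfloor\lambda_{i}/2\rfloor$. For a factorization $\mathbf{t}=(t_{1},\dots,t_{\ell})\in\red_{\bth}(x)$ the subgroup $\langle t_{1},\dots,t_{\ell}\rangle\leq\Sym_{N}$ is unchanged by every Hurwitz move, since conjugating one generator by another does not alter the generated group; hence the partition of $[N]$ into orbits of this subgroup is a Hurwitz invariant. Each such orbit is a union of cycles of $x$, and each $3$-cycle $t_{i}$ is supported inside one orbit, so restricting $\mathbf{t}$ to the factors living over a fixed orbit exhibits $\mathbf{t}$ as a shuffle of ``connected'' sub-factorizations, one per orbit, whose lengths add up to $\la(x)$ by additivity of the length formula. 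First I would record that factors in different orbits have disjoint support, so the corresponding braid generators act by simply transposing two commuting letters; this shows the orbit count is multiplicative over the blocks of the invariant partition and independent of how the blocks are interleaved.

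The heart of the argument is the classification of the possible blocks, i.e.\ of connected reduced factorizations. Here I would pass to transpositions: writing each $3$-cycle as a product of two transpositions turns a length-$\ell$ connected factorization of $x_{B}$ on $s=|\mathrm{supp}(x_{B})|$ points into a transitive factorization into $r=2\ell$ transpositions, and the Riemann--Hurwitz relation for the associated connected branched cover of the sphere reads $2\ell=s+\mathrm{cyc}(x_{B})-2+2g$, with $g\geq 0$ the genus. Substituting the reduced value $\ell=\la(x_{B})=\tfrac12(s-o)$, where $o$ is the number of odd cycles and $\mathrm{cyc}(x_{B})=o+e$ with $e$ even cycles, collapses this to $g=1-o-\tfrac{e}{2}$. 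Nonnegativity of $g$ then forces $2o+e=2$, so a connected reduced factorization is supported either on a single odd cycle $(o=1,\,e=0)$ or on a pair of even cycles $(o=0,\,e=2)$, both of genus $0$, and all other configurations are impossible. This is the key point; the only delicate issue I anticipate here is justifying the genus relation in the present combinatorial setting, or else replacing it by a direct induction tracking $s-\mathrm{cyc}$ under multiplication by each factor.

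It then remains to count orbits within the two block types. For a single odd cycle the factorization is a genus-$0$, tree-like object, and I would prove Hurwitz transitivity (a single orbit) by induction on the cycle length, using a Hurwitz move to detach a leaf $3$-cycle and invoking the classical transitivity of the transposition analogue in $\Sym_{N}$. For a pair of even cycles $C_{1},C_{2}$ I would exhibit an explicit $\mathbb{Z}/2$-valued invariant (a parity recording the relative orientation with which the two cycles are interwoven) separating $\red_{\bth}(x_{B})$ into at least two classes, and then prove transitivity on each class, again by induction after peeling off one factor. This gives exactly $2$ orbits per even pair. The ``at most two'' half, i.e.\ transitivity within a fixed value of the invariant, is where I expect the real work, since it requires a workable normal form for these genus-$0$ factorizations.

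Finally I would assemble the pieces. By the invariance and multiplicativity of the first step, a Hurwitz orbit is determined by its block partition together with the orbit data inside each block; by the classification this partition pairs up the $2k$ even cycles into $k$ blocks of size two, while each odd cycle forms its own block contributing a single choice. Summing the product of the per-block counts over all pairings yields
\begin{displaymath}
	\sum_{\text{pairings of the }2k\text{ even cycles}} 2^{k}
	=(2k-1)!!\cdot 2^{k}
	=\frac{(2k)!}{k!}
	=(2k)_{k},
\end{displaymath}
which is the asserted number of orbits.
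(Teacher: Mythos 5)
Your structural reduction is sound, and the first half of your argument is a genuinely nice alternative to the paper's. The paper obtains the matching of even cycles (its Lemma~\ref{lem:matchings}) by analysing the three types of cover relations from Corollary~\ref{cor:covers}; you instead take the Hurwitz-invariant orbit partition of $\langle t_1,\ldots,t_\ell\rangle$ and classify the possible blocks by converting to transpositions and invoking Riemann--Hurwitz, getting $g=1-o-e/2$ and hence $(o,e)\in\{(1,0),(0,2)\}$ (using that each block's restriction is an even permutation, so $e$ is even). That computation is correct, it automatically rules out spurious fixed points in a block, and it recovers exactly the paper's conclusion that every letter is supported inside a single odd cycle or inside a matched pair of even cycles. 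The multiplicativity over blocks and the final count $(2k-1)!!\cdot 2^{k}=(2k)!/k!=(2k)_{k}$ are also exactly as in the paper.

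The genuine gap is in the transitivity claims, which you defer but which constitute the bulk of the paper's proof. Knowing the invariants only bounds the number of orbits from below by $(2k)_k$; to get equality you must show (a) transitivity of the Hurwitz action on $\red_{\bth}(\zeta)$ for a single odd cycle $\zeta$, and (b) transitivity within each parity class for a pair of even cycles. Your ``peel off the first letter and induct'' scheme is the right skeleton, but it does not close on its own: given two reduced words $a\ww'$ and $b\xx'$ with the same invariants, induction on length only helps once you have arranged for them to begin with the \emph{same} letter, and that requires proving that every admissible first letter occurs at the front of some word in the Hurwitz orbit of a fixed normal form. This is precisely where the paper spends its effort (the explicit braid words $\ww_{k,j}$ in the proof of Proposition~\ref{prop:hurwitz_transitive}, and Lemma~\ref{lem:technical} with the words $\beta_{p,k}$, $\alpha_{p,k,j}$, $\beta_{k,j}$ for the four types of odd mixed generators), and nothing in your proposal substitutes for it. Moreover, your suggested shortcut of ``invoking the classical transitivity of the transposition analogue'' does not work directly: after replacing each $3$-cycle by two transpositions, general Hurwitz moves on the transposition word destroy the pairing into consecutive transpositions forming $3$-cycles, so transitivity at the transposition level does not descend to the $3$-cycle level. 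Similarly, the well-definedness of your $\mathbb{Z}/2$ parity invariant (that \emph{all} mixed generators in a given reduced word share the same parity, the paper's Lemma~\ref{lem:mixed_generators}) is asserted rather than proved, and it too requires a case analysis. So the architecture is right and the block classification is an attractive improvement, but the core of the theorem --- exact transitivity within each invariant class --- is missing.
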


We see in particular that the Hurwitz action is transitive on $\red_{\bth}(x)$ if and only if $x$ has only cycles of odd length.

\medskip

This article is organized as follows.  In Section~\ref{sec:preliminaries} we recall the necessary definitions and concepts.  In Section~\ref{sec:alternating} we recall the definition of the alternating group and prove some basic properties of the poset $(\Alt_{N},\leqa)$.  Subsequently, in Section~\ref{sec:alternating_noncrossing} we recall the definition of noncrossing partitions, and exhibit a subset of $\Alt_{N}$ whose combinatorial properties can be described with certain noncrossing partitions.  We prove Theorem~\ref{thm:main_zeta_polynomial} in Section~\ref{sec:alternating_combinatorics}, and establish further combinatorial results of certain intervals of ${(\Alt_{N},\leqa)}$.  In Section~\ref{sec:hurwitz} we prove Theorem~\ref{thm:main_hurwitz_orbits}, and we finish this article with Section~\ref{sec:extensions}, where we outline potential extensions of the work presented here.

\section{Preliminaries}
	\label{sec:preliminaries}

\subsection{Poset Terminology}
	\label{sec:posets}
Let $\Poset=(P,\leq)$ be a partially ordered set, or \defn{poset} for short. If $x < y$ in $\Poset$ and there exist no $z\in P$ such that $x<z<y$, then $y$ \defn{covers} $x$ and we write $x\lessdot y$.  The \defn{Hasse diagram} of $\Poset$ is the directed graph with vertex set $P$ formed by such \defn{cover relations}. For $x,y\in P$ with $x\leq y$, the set $\{p\in P\mid x\leq p\leq y\}$ is an \defn{interval}.  Let $\Int(\Poset)$ denote the set of all intervals of $\Poset$.
 
An \defn{($m$-)multichain} is a sequence $(x_{1},x_{2},\ldots,x_{m})$ of $m$ elements of $P$ such that $x_{1}\leq x_{2}\leq \cdots\leq x_{m}$.  If all elements in this sequence are distinct, we simply speak of an \defn{($m$-)chain}.  A chain $C$ (considered as a set) is \defn{maximal} if there is no $x\in P\setminus C$ such that $C\cup\{x\}$ is a chain.  Let $\MaxChains(\Poset)$ denote the set of maximal chains of $\Poset$.  

The poset $\Poset$ is \defn{graded} if its maximal chains all  have the same cardinality.  In that case we can assign a \defn{rank function} to $\Poset$, where the rank of an element $x$ is the maximum cardinality minus one of some chain from a minimal element (below $x$) to $x$.  Let $\Rank_{\Poset}(k)$ denote the set of elements of $\Poset$ having rank $k$.  The \defn{rank} of $\Poset$ is the rank of a maximal element of $\Poset$. 

Now let $\Poset$ be a \defn{bounded} poset, \ie $\Poset$ has a least element $\hat{0}$ and greatest element $\hat{1}$.  Moreover, let $\Poset$ be graded with rank function $\rk$.  Given an $m$-multichain $C=(x_{1},x_{2},\ldots,x_{m})$ of $\Poset$, the \defn{rank jump vector} of $C$ is $r(C)=(r_{1},r_{2},\ldots,r_{m+1})$, where $r_{i}=\text{rk}(x_{i})-\text{rk}(x_{i-1})$ for $i\in[m+1]$, and $x_{0}=\hat{0}, x_{m+1}=\hat{1}$.  

The \defn{M{\"o}bius function} of a poset $\Poset$ is the function $\mu\colon P\times P\to\mathbb{Z}$ recursively defined by
\begin{displaymath}
	\mu(x,y) = \begin{cases}1, & \text{if}\;x=y,\\ -\sum_{x\leq z<y}{\mu(x,z)}, & \text{if}\;x<y,\\ 0, & \text{otherwise}.\end{cases}
\end{displaymath}
If $\Poset$ is bounded, then we call the value $\mu(\hat{0},\hat{1})$ the \defn{M{\"o}bius number} of $\Poset$, and it is denoted by $\mu(\Poset)$.  

If $\Poset$ is finite, let $\ZetaPol_{\Poset}(m)$ denote the number of $(m-1)$-multichains of $\Poset$.  This is in fact a polynomial function of $m$ called the \defn{zeta polynomial} of $\Poset$, which encodes a lot of information about $\Poset$.  In particular, when $\Poset$ is bounded and graded of rank $n$, then $\mu(\Poset)=\ZetaPol_{\Poset}(-1)$, $\ZetaPol_{\Poset}(m)$ has degree $n$, and its leading coefficient is $\bigl\lvert\MaxChains(\Poset)\bigr\rvert/n!$; see~\cite{stanley11enumerative_vol1}*{Proposition~3.12.1}.

\subsection{Groups With Generating Sets Closed Under Conjugation}
	\label{sec:generated_groups}
Let $G$ be a group with identity $\id$, and let $\Gen\subseteq G$ be a generating set.  Any element $g\in G$ can be written as a word over the alphabet $\Gen$.  A shortest such word is \defn{$\Gen$-reduced}, and the length of a $\Gen$-reduced word for $g$ will be denoted by $\ell_{\Gen}(g)$.  This gives rise to a graded partial order on $G$ defined by 
\begin{align}\label{eq:order}
	x\leq_{\Gen}y\quad\text{if and only if}\quad\ell_{\Gen}(x)+\ell_{\Gen}(x^{-1}y)=\ell_{\Gen}(y);
\end{align}
the \defn{$\Gen$-prefix order}.  This definition has first appeared explicitly in \cite{brady01partial} in the case where $G$ is the symmetric group and $\Gen$ is the set of all transpositions.  It is also implicitly contained in \cite{biane97some} and has its origins perhaps in \cite{garside69braid}.  In other words, for $x,y\in G$ we have $x\leq_{\Gen}y$ if and only if $x$ lies on a geodesic from the identity to $y$ in the (right) Cayley graph of $G$ with respect to $\Gen$.  We write $[x,y]_{\Gen}$ for the interval between $x$ and $y$ in $(G,\leq_{\Gen})$.

The poset $(G,\leq_{\Gen})$ has rank function $\ell_{\Gen}$, and the following result states that when $\Gen$ is well behaved, this poset is in fact locally self dual.

\begin{proposition}[\cite{huang17absolute}*{Proposition~2.5}]\label{prop:generalized_kreweras}
	Let $G$ be a group, and suppose $\Gen\subseteq G$ is a generating set closed under $G$-conjugation.  Let $y,z\in G$ with $y\leq_{\Gen}z$. The permutation of $G$ defined by $x\mapsto yx^{-1}z$ restricts to a poset antiisomorphism $[y,z]_{\Gen}\to[y,z]_{\Gen}$. 
\end{proposition}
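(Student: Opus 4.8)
The plan is to exploit the two structural consequences of $\Gen$ being closed under $G$-conjugation, and then to reduce the general interval $[y,z]_{\Gen}$ to the principal case $[\id,w]_{\Gen}$ with $w=y^{-1}z$. First I would record that $\ell_{\Gen}$ is a class function: if $g=t_{1}\cdots t_{k}$ is a $\Gen$-reduced word and $h\in G$, then $hgh^{-1}=(ht_{1}h^{-1})\cdots(ht_{k}h^{-1})$ is again a word over $\Gen$, so $\ell_{\Gen}(hgh^{-1})\leq\ell_{\Gen}(g)$; applying this to $h^{-1}$ gives equality. An immediate consequence, read directly off the definition~\eqref{eq:order}, is that every conjugation $x\mapsto h^{-1}xh$ is an order-automorphism of $(G,\leq_{\Gen})$. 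These are the only places where the conjugation hypothesis is used; everything afterwards is formal.

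Second, writing $w=y^{-1}z$, I would show that left translation $L_{y}\colon u\mapsto yu$ restricts to a poset isomorphism $[\id,w]_{\Gen}\to[y,z]_{\Gen}$. The technical heart, and the step I expect to be the main obstacle, is the length-additivity lemma $\ell_{\Gen}(yv)=\ell_{\Gen}(y)+\ell_{\Gen}(v)$ for every $v\leq_{\Gen}w$. One inequality is subadditivity of $\ell_{\Gen}$; for the reverse I would combine $\ell_{\Gen}(yw)=\ell_{\Gen}(y)+\ell_{\Gen}(w)$ (valid since $y\leq_{\Gen}z$) with $\ell_{\Gen}(w)=\ell_{\Gen}(v)+\ell_{\Gen}(v^{-1}w)$ (valid since $v\leq_{\Gen}w$) and $\ell_{\Gen}(yw)\leq\ell_{\Gen}(yv)+\ell_{\Gen}(v^{-1}w)$. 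Granting the lemma, both defining inequalities of $y\leq_{\Gen}yv\leq_{\Gen}z$ follow by substitution, so $L_{y}$ maps $[\id,w]_{\Gen}$ into $[y,z]_{\Gen}$; the inverse correspondence $x\mapsto y^{-1}x$ lands in $[\id,w]_{\Gen}$ by the same circle of identities, giving a bijection, and order-preservation in both directions is a one-line computation applying the lemma at $v=u$ and $v=u'$.

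Finally, a short calculation gives $L_{y}^{-1}\circ\phi\circ L_{y}=\psi$, where $\phi(x)=yx^{-1}z$ and $\psi(u)=u^{-1}w$; hence it suffices to prove $\psi$ is an antiisomorphism of $[\id,w]_{\Gen}$. Here the class-function property does all the work. That $\psi$ maps the interval into itself follows from $\ell_{\Gen}\bigl((u^{-1}w)^{-1}w\bigr)=\ell_{\Gen}(w^{-1}uw)=\ell_{\Gen}(u)$, which forces $\psi(u)\leq_{\Gen}w$; and order-reversal $u\leq_{\Gen}u'\Rightarrow\psi(u')\leq_{\Gen}\psi(u)$ reduces, after cancelling the common factor $w$ and using the class-function property on $w^{-1}u'u^{-1}w$, to the identity $\ell_{\Gen}(u'u^{-1})=\ell_{\Gen}(u')-\ell_{\Gen}(u)$, which holds because $u'u^{-1}$ is conjugate to $u^{-1}u'$ and $\ell_{\Gen}(u^{-1}u')=\ell_{\Gen}(u')-\ell_{\Gen}(u)$ by $u\leq_{\Gen}u'$. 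Bijectivity comes for free from $\psi^{2}(u)=w^{-1}uw$: conjugation by $w$ is an order-automorphism of $(G,\leq_{\Gen})$ fixing $\id$ and $w$, hence preserves $[\id,w]_{\Gen}$, so $\psi^{2}$ is a bijection of the interval; this makes $\psi$ a bijection and, applied twice, also supplies the reverse implication of order-reversal. Thus $\psi$, and therefore $\phi=L_{y}\circ\psi\circ L_{y}^{-1}$, is a poset antiisomorphism of $[y,z]_{\Gen}$ onto itself.
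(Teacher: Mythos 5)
Your proof is correct. Note that the paper itself offers no proof of this statement --- it is quoted from Huang--Lewis--Reiner, so there is nothing internal to compare against; your argument is a valid self-contained substitute. Every step checks out: the length-additivity lemma $\ell_{\Gen}(yv)=\ell_{\Gen}(y)+\ell_{\Gen}(v)$ for $v\leq_{\Gen}y^{-1}z$ is proved by exactly the right sandwich of triangle inequalities, the conjugacy-invariance of $\ell_{\Gen}$ is used only where it is genuinely needed, and the bijectivity-via-$\psi^{2}$ trick is clean. For comparison, the cited source argues more directly, without reducing to the principal interval: one first characterizes $x\in[y,z]_{\Gen}$ by the single identity $\ell_{\Gen}(z)=\ell_{\Gen}(y)+\ell_{\Gen}(y^{-1}x)+\ell_{\Gen}(x^{-1}z)$ (the reverse implication being forced by subadditivity), and then observes that $x'=yx^{-1}z$ satisfies $y^{-1}x'=x^{-1}z$ while $(x')^{-1}z$ is conjugate to $y^{-1}x$, so the map merely permutes the two non-constant summands up to conjugacy; order-reversal follows from the same bookkeeping applied to a pair $x_{1}\leq_{\Gen}x_{2}$. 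That route is shorter and avoids the translation step, but your decomposition through $[\id,y^{-1}z]_{\Gen}$ has the advantage of isolating the Kreweras complement $u\mapsto u^{-1}w$ --- the special case the paper actually goes on to use --- and of making Corollary~\ref{cor:shifting_intervals} an immediate byproduct.
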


An important example of the maps from Proposition~\ref{prop:generalized_kreweras} is obtained in the special case $y=\id$; the resulting map $K_{z}(x)=x^{-1}z$ from $[\id,z]_{\Gen}$ to itself will be called the \defn{Kreweras complement}. 

\begin{corollary}\label{cor:shifting_intervals}
	Under the same hypotheses as in Proposition~\ref{prop:generalized_kreweras},  the map $K_{z}\circ K_{y^{-1}z}$ restricts to an isomorphism $[y,z]_{\Gen}\cong [\id,y^{-1}z]_{\Gen}$.
\end{corollary}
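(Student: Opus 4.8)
The plan is to exhibit the claimed map as a composite of two anti-isomorphisms supplied by Proposition~\ref{prop:generalized_kreweras}, using the elementary fact that the composite of two order-reversing bijections is order-preserving. Before doing so I would record that the interval $[\id,y^{-1}z]_{\Gen}$ is well defined and sits inside $[\id,z]_{\Gen}$: since $y\leq_{\Gen}z$ we have $y\in[\id,z]_{\Gen}$, and as $K_{z}$ is an anti-automorphism of $[\id,z]_{\Gen}$ (Proposition~\ref{prop:generalized_kreweras} with bottom element $\id$) its value $K_{z}(y)=y^{-1}z$ again lies in $[\id,z]_{\Gen}$; hence $y^{-1}z\leq_{\Gen}z$ and $[\id,y^{-1}z]_{\Gen}\subseteq[\id,z]_{\Gen}$.

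The key step is to check that $K_{z}$ restricts to an anti-isomorphism $[y,z]_{\Gen}\to[\id,y^{-1}z]_{\Gen}$. Indeed, $K_{z}$ is an order-reversing bijection of the whole interval $[\id,z]_{\Gen}$, and $[y,z]_{\Gen}\subseteq[\id,z]_{\Gen}$. A general order-theoretic observation then applies: an order-reversing bijection of a poset carries a subinterval onto the interval bounded by the images of its endpoints, with the two bounds swapped. Here those images are $K_{z}(z)=\id$ and $K_{z}(y)=y^{-1}z$, so $K_{z}$ maps $[y,z]_{\Gen}$ bijectively and order-reversingly onto $[\id,y^{-1}z]_{\Gen}$. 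Surjectivity onto this interval is automatic because $K_{z}$ is a bijection of $[\id,z]_{\Gen}$ whose inverse is likewise order-reversing.

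Finally, Proposition~\ref{prop:generalized_kreweras} (again with bottom element $\id$) gives that $K_{y^{-1}z}$ is an anti-automorphism of $[\id,y^{-1}z]_{\Gen}$. Composing the restriction of $K_{z}$ with $K_{y^{-1}z}$ therefore yields a bijection $[y,z]_{\Gen}\to[\id,y^{-1}z]_{\Gen}$ that is order-preserving, being the composite of two order-reversing bijections; this is the asserted isomorphism. As a sanity check one can compute the composite explicitly as $x\mapsto K_{y^{-1}z}\bigl(K_{z}(x)\bigr)=(x^{-1}z)^{-1}(y^{-1}z)=z^{-1}xy^{-1}z$, which sends $y$ to $\id$ and $z$ to $y^{-1}z$, as it must.

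I do not expect a serious obstacle here; the content is entirely formal once Proposition~\ref{prop:generalized_kreweras} is in hand. The one point needing care is the order of composition: the map must be applied as ``$K_{z}$ first, then $K_{y^{-1}z}$'', since only $K_{z}$ is defined on $[y,z]_{\Gen}$, whereas $K_{y^{-1}z}$ is defined on $[\id,y^{-1}z]_{\Gen}$. The only mildly substantive ingredient is the general fact that an order-reversing bijection of $[\id,z]_{\Gen}$ takes the subinterval $[y,z]_{\Gen}$ onto $[\id,y^{-1}z]_{\Gen}$, together with the preliminary check that $y^{-1}z\leq_{\Gen}z$ so that all the intervals in sight genuinely exist.
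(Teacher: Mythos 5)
Your argument is correct and is precisely the intended (and, in the paper, unwritten) proof: one composes the two anti-isomorphisms furnished by Proposition~\ref{prop:generalized_kreweras}, first noting that the anti-automorphism $K_{z}$ of $[\id,z]_{\Gen}$ carries the subinterval $[y,z]_{\Gen}$ onto $[\id,y^{-1}z]_{\Gen}$, and then applying the anti-automorphism $K_{y^{-1}z}$ of the latter. Your reading of the composition order ($K_{z}$ applied first) is indeed the only one under which the statement parses, and your explicit formula $x\mapsto z^{-1}xy^{-1}z$, sending $y$ to $\id$ and $z$ to $y^{-1}z$, confirms it.
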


The next proposition is usually stated in the case $H=G$.

\begin{proposition}\label{prop:invariant_conjugation}
	Let $G$ be a group, and $H\leq G$ be a subgroup generated by some set $\Gen\subseteq H$.  If $\Gen$ is closed under $G$-conjugation, then $\ell_{\Gen}$ is invariant under $G$-conjugation, and if $x,x'\in H$ are $G$-conjugate, then the intervals $[\id,x]_{\Gen}$ and $[\id,x']_{\Gen}$ are isomorphic.
\end{proposition}
\begin{proof}
	Let $x\in H$ have $\ell_{\Gen}(x)=k$.  We can therefore write $x=t_{1}t_{2}\cdots t_{k}$ with $t_{i}\in \Gen$ for $i\in[k]$.  Now, let $x'=w^{-1}xw$ for some $w\in G$.  We obtain 
	\begin{displaymath}
		x'=(w^{-1}t_{1}w)(w^{-1}t_{2}w)\cdots(w^{-1}t_{k}w),
	\end{displaymath}
	and by assumption $w^{-1}t_{i}w\in \Gen$ for $i\in[k]$.  The assumption that $\ell_{\Gen}(x)=k$ now implies $\ell_{\Gen}(x')=k$, which proves the first claim.  The second claim follows then by definition of $\leq_{\Gen}$.
\end{proof}

\subsection{The Hurwitz Action}
	\label{sec:hurwitz_action}
 Let $\Braid_{k}$ denote the \defn{braid group on $k$ strands}, which admits the presentation
\begin{equation}
	\begin{split}
		\Braid_{k}=\bigl\langle\sigma_{1},\sigma_{2},\ldots,\sigma_{k-1}\mid & \;\sigma_{i}\sigma_{j}=\sigma_{j}\sigma_{i},\;\text{for}\;1\leq i,j<k,\;\text{and}\;\lvert i-j\rvert>1\;\text{and}\\
		& \;\sigma_{i}\sigma_{i+1}\sigma_{i}=\sigma_{i+1}\sigma_{i}\sigma_{i+1}\;\text{for}\;1\leq i<k-1\bigr\rangle.
	\end{split}
\end{equation}
When $\Gen$ is closed under $G$-conjugation, then $\Braid_{k}$ acts on the set of words over $\Gen$ of length $k$ as follows.  Let $\mathbf{x}=t_{1}t_{2}\cdots t_{k}$ be such a word.  The \defn{Hurwitz operators} $\sigma_{i}$ and $\sigma_{i}^{-1}$ act on $\mathbf{x}$ by
\begin{align}
	\sigma_{i}\cdot\mathbf{x} & = t_{1}t_{2}\cdots t_{i-1}t_{i+1}(t_{i+1}^{-1}t_{i}t_{i+1})t_{i+2}\cdots t_{k}\label{eq:hurwitz_left},\\
	\sigma_{i}^{-1}\cdot\mathbf{x} & = t_{1}t_{2}\cdots t_{i-1}(t_{i}t_{i+1}t_{i}^{-1})t_{i}t_{i+2}\cdots t_{k}\label{eq:hurwitz_right}.
\end{align}

\begin{remark}
	Note that if $t_i$ and $t_{i+1}$ commute then $\sigma_i$ and $\sigma_{i}^{-1}$ both perform this commutation in $\mathbf{x}$.
\end{remark}

It is straightforward to verify that the Hurwitz operators satisfy the defining relations for $\Braid_{k}$, so they indeed extend to a group action of $\Braid_{k}$, the \defn{Hurwitz action}. Let us say that two words are \defn{Hurwitz equivalent} if they are in the same orbit for this action. It is obvious from Equations~\eqref{eq:hurwitz_left} and \eqref{eq:hurwitz_right} that two equivalent words represent the same element of $G$. This implies that given $x\in G$ of length $k$, the Hurwitz action can be in particular studied on \defn{$\red_\Gen(x)$}, the set of all $\Gen$-reduced decompositions of $x$.

The Hurwitz action is closely related to the study and enumeration of ramified coverings of the $2$-sphere and probably appeared first in \cite{hurwitz91ueber}.  Such a ramified covering is essentially determined by a set of branching points, which give rise to a sequence of permutations, or a constellation, associated with this covering.  Theorem~1.2.28 in \cite{lando04graphs} states that two ramified coverings of the $2$-sphere are ``flexibly'' equivalent if and only if their associated constellations belong to the same Hurwitz orbit.  

The enumeration of Hurwitz orbits in subgroups of the symmetric group therefore yields information on the number of ``flexibly'' equivalent ramified coverings.  The number of Hurwitz orbits also plays a role for the enumeration of all ramified coverings (up to isomorphism).  If one such covering is given, then it is relatively easy to compute all coverings in its Hurwitz orbit.  If we want to compute all coverings, we thus need at least one representative from each Hurwitz orbit, which may be a non-trivial task at times. 

We explicitly describe and enumerate the Hurwitz orbits on $\red_\bth(x)$ for any element $x$ of the alternating group in Section~\ref{sec:hurwitz}.
	
\section{The Alternating Group and $3$-Cycles}
	\label{sec:alternating}
Let us now define the objects of our study.  Recall that the \defn{symmetric group} $\Sym_{N}$ is the group of all permutations of $[N]=\{1,2,\ldots,N\}$.  The \defn{support} of  $x\in\Sym_{N}$ is $\text{supp}(x)=\bigl\{i\in[N]\mid x(i)\neq i\bigr\}$.  A cycle is a permutation which moves the elements of its support cyclically. We say that a cycle is \defn{odd} (respectively \defn{even}) if its support has odd (respectively even) cardinality. Any permutation $x$ has a unique decomposition as a product of cycles with disjoint supports, up to the order of the factors. The set of \defn{cycles} of $x$ consists of the cycles of the previous decomposition together with the fixed points of $x$; where these fixed points are considered as odd cycles.

 $\Sym_{N}$ is naturally generated by the set of all transpositions, and the minimal number of transpositions needed to factor a permutation $x\in\Sym_{N}$ is given by the \defn{reflection length} 
\begin{equation}\label{eq:reflection_length}
	\lt(x) = N-\cyc(x), 
\end{equation}
where $\cyc(x)$ denotes the number of cycles of $x$. 

This well-known result is based on the following observation.  Let $(i\; j)$ be a transposition.  If $w\in\Sym_{n}$ has two distinct cycles containing $i$ and $j$, we may write $w=w'(\seq_i)(\seq_j)$ where $\seq_i$ and $\seq_j$ are sequences ending with $i$ and $j$ respectively.  Then $w\cdot (i\;j)=w'(\seq_i\; \seq_j)$, and so the number of cycles decreases by one. In this case we say that we \defn{join} the two cycles; more generally, given $m$ disjoint cycles of $w$, joining them in a new cycle consists in picking a starting element in each of them, concatenate the $m$ sequences thus obtained in any order, and consider the resulting sequence as a cycle. The inverse operation is called \defn{splitting} a cycle.

The \defn{alternating group} $\Alt_{N}$ is the subgroup of $\Sym_{N}$ that consists of all permutations with even reflection length. It is easily seen that $\Alt_{N}$ is generated by the set 

\begin{displaymath}
	C_{3,N}=\bigl\{(i\;j\;k)\mid 1\leq i,j,k\leq N,\lvert\{i,j,k\}\rvert=3\bigr\}
\end{displaymath}
of all $3$-cycles.  Since $C_{3,N}$ is closed under taking inverses, we can define a $C_{3,N}$-prefix order on $\Alt_{3}$ as in \eqref{eq:order} via the length function $\ell_{C_{3,N}}$.  Let us write $\la$ for this length function, and denote the resulting partial order by $\leqa$.  

Let $\oc(x)$ denote the number of odd cycles of $x\in\Alt_{N}$; by definition this number takes into account the fixed points of $x$.  The next proposition gives an explicit formula for $\la$, which nicely parallels that of $\lt$. To the best of our knowledge this result was first proved in~\cite{herzog76representation}. We give here a proof for completeness, and because it will give us Proposition~\ref{prop:summary_length} as a byproduct.

\begin{proposition}\label{prop:alternating_length}
	For any $x\in\Alt_{N}$, we have
	\begin{displaymath}
		\la(x)=\frac{N-\oc(x)}{2}.
	\end{displaymath}
\end{proposition}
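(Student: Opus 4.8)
The plan is to establish the two inequalities $\la(x)\ge\frac{N-\oc(x)}{2}$ and $\la(x)\le\frac{N-\oc(x)}{2}$ separately, after first recording that the right-hand side is a nonnegative integer. For integrality, note that every odd cycle contributes an odd summand to $N$ and every even cycle an even summand, so that $N\equiv\oc(x)\pmod 2$. Moreover, since $x\in\Alt_{N}$ has sign $+1$ and each even cycle contributes a factor $-1$ to the sign, the number of even cycles of $x$ is even; this last point will be what makes the upper bound construction work.

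For the lower bound I would show that multiplying by a single $3$-cycle changes $\oc$ by at most $2$ in absolute value. The key observation is that $t=(a\;b\;c)$ fixes every point outside $\{a,b,c\}$, so the only cycles of $w$ that are altered in $wt$ are those meeting $\{a,b,c\}$; writing $U$ for the union of their supports, the set $U$ is fixed setwise by both $w$ and $wt$. The points $a,b,c$ lie in $p\in\{1,2,3\}$ cycles of $w$, and a short case analysis—three cycles joined into one when $p=3$, a combined join-and-split leaving two cycles when $p=2$, and one cycle split into at most three when $p=1$—shows that $w$ and $wt$ each have at most three cycles contained in $U$. Consequently the number of odd cycles inside $U$ is at most $3$ on each side, and since $|U|$ is fixed these two counts have equal parity, forcing their difference to be at most $2$; hence $\lvert\oc(wt)-\oc(w)\rvert\le 2$. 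As $\oc(\id)=N$, any factorization $x=t_{1}\cdots t_{k}$ into $3$-cycles then gives $\oc(x)\ge N-2k$, i.e.\ $k\ge\frac{N-\oc(x)}{2}$. This case analysis of how cycles are joined and split is precisely the data recorded in Proposition~\ref{prop:summary_length}, obtained here as a byproduct. The main obstacle is exactly this step: one must rule out a jump of $\oc$ by $4$, and the argument that does so is the observation that only the at most three cycles meeting $\{a,b,c\}$ can change.

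For the upper bound I would exhibit a factorization of the required length by treating the cycles of $x$ one pool at a time. Each odd cycle of length $2m+1$ is a product of $m$ three-cycles, for instance via $(a_{1}\,a_{2}\,\cdots\,a_{2m+1})=(a_{1}\,a_{2}\,a_{3})(a_{1}\,a_{4}\,a_{5})\cdots(a_{1}\,a_{2m}\,a_{2m+1})$, contributing $\tfrac{\ell-1}{2}$ factors for a cycle of length $\ell$. Because $x$ has an even number of even cycles, I pair them off and factor each pair of even cycles of lengths $2p$ and $2q$ into exactly $p+q$ three-cycles: writing each even cycle as its telescoping product of transpositions and grouping consecutive pairs expresses a $2p$-cycle as $p-1$ three-cycles times one leftover transposition, and likewise for the $2q$-cycle; the two leftover transpositions, having disjoint supports, are absorbed into two further three-cycles via the identity $(a\;b)(c\;d)=(a\;b\;c)(b\;c\;d)$. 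Summing the contributions yields $\sum_{\text{odd}}\tfrac{\ell-1}{2}+\sum_{\text{even}}\tfrac{\ell}{2}=\frac{N-\oc(x)}{2}$ three-cycles, so $\la(x)\le\frac{N-\oc(x)}{2}$. Combining the two bounds proves the stated formula.
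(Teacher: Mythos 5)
Your proposal is correct and follows essentially the same route as the paper: a lower bound from the fact that multiplying by a single $3$-cycle changes $\oc$ by at most $2$ (equivalently, changes $\tfrac{N-\oc}{2}$ by at most $1$), and an upper bound from a factorization that splits odd cycles and pairs up the evenly many even cycles. The differences are cosmetic --- you use a parity argument on $\lvert U\rvert$ where the paper enumerates subcases (which it needs anyway to extract Proposition~\ref{prop:summary_length}), and you write down one explicit global factorization where the paper descends greedily one $3$-cycle at a time.
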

\begin{proof}
	For $x\in\Alt_{N}$ define the quantity $\la^*(x)=\tfrac{N-\oc(x)}{2}$.  Let $a=(i\;j\;k)\in C_{3,N}$, and let $\zeta_{i},\zeta_{j}$, and $\zeta_{k}$ be the cycles of $x$ that contain $i,j$, and $k$, respectively. We let $x'$ be the product of all other cycles of $x$. 
	
	We first study the possible values for the difference $\la^*(xa)-\la^*(x)$. We need to distinguish three cases depending on the cardinality of $\{\zeta_{i},\zeta_{j},\zeta_{k}\}$. In the following we will denote by $\seq_t$ a sequence of integers ending with $t$.
	
	(i) Suppose first that the cycles $\zeta_{i},\zeta_{j},\zeta_{k}$ are pairwise distinct. Write $\zeta_i=(\seq_i)$, $\zeta_j=(\seq_j)$ and $\zeta_i=(\seq_k)$. Then 
	\begin{align*}
		x\cdot a & = x' (\seq_i) (\seq_j) (\seq_k) \cdot(i\; j\; k) = x' (\seq_i  \;\seq_j \;\seq_k).
	\end{align*}

	It follows that $\la^*(xa)-\la^*(x)=0$ if $\{\zeta_{i},\zeta_{j},\zeta_{k}\}$ contains zero or one odd cycle, and $\la^*(xa)-\la^*(x)=1$ if $\{\zeta_{i},\zeta_{j},\zeta_{k}\}$ contains two or three odd cycles.
	
	\medskip

	(ii) Suppose now that $\{\zeta_i,\zeta_j,\zeta_k\}$ has cardinality $2$. Because of the cyclic symmetry $(i\;j\;k)=(j\;k\;i)=(k\;i\;j)$, we can assume without loss of generality $\zeta_{i}=\zeta_{j}=:\zeta_{ij}$ and $\zeta_{ij}\neq\zeta_{k}$.  Write $\zeta_{ij}=(\seq_i \; \seq_j)$ and $\zeta_k=(\seq_k)$. Then
	\begin{align*}
		x\cdot a & = x' (\seq_i \; \seq_j) (\seq_k) \cdot(i\; j\; k) = x' (\seq_i) (\seq_j \;\seq_k).
	\end{align*}

	It follows that if $\lvert\seq_j\rvert$ is even, then $\la^*(xa)-\la^*(x)=0$. Let thus $\lvert\seq_j\rvert$ be odd.  Then $\la^*(xa)-\la^*(x)=0$ if $\lvert\seq_i\rvert$ and $\lvert\seq_k\rvert$ have the same parity, and $\la^*(xa)-\la^*(x)=-1$ if $\lvert\seq_i\rvert$ is odd and $\lvert\seq_k\rvert$ is even.  Moreover, $\la^*(xa)-\la^*(x)=1$ if $\lvert\seq_i\rvert$ is even and $\lvert\seq_k\rvert$ is odd.
	
	\medskip
	
	(iii) Assume finally $\zeta_{i}=\zeta_{j}=\zeta_{k}=:\zeta$. There are two possibilities for this cycle depending on the cyclic order of $i,j,k$: it can be either written as $(\seq_i \;\seq_j \;\seq_k)$ or $(\seq_i\;\seq_k\;\seq_j)$.  In the first case, 
	\begin{align*}
		x\cdot a & = x' (\seq_i \; \seq_j \;\seq_k) \cdot(i\; j\; k) = x' (\seq_i \;\seq_k\;\seq_j ).
	\end{align*}
	Clearly then $\la^*(xa)-\la^*(x)=0$. In the second case, 
	\begin{align*}
		x\cdot a & =x'  (\seq_i \;\seq_k\;\seq_j ) \cdot(i\; j\; k) = x' (\seq_i) (\seq_j) (\seq_k).
	\end{align*}
	It follows that $\la^*(xa)-\la^*(x)=0$ if the triple $\bigl(\lvert\seq_i\rvert,\lvert\seq_j\rvert,\lvert\seq_k\rvert\bigr)$ contains at most one odd integer, and $\la^*(xa)-\la^*(x)=-1$ otherwise.

	\medskip

	We now prove that $\la^*(x)=\la(x)$ for all $x\in\Alt_{N}$. First of all we notice that $\la^*(x)=0$ if and only if $x=\id$, since only the identity permutation possesses $N$ odd cycles.  By the case analysis performed above, we have $\la^*(xa)\leq \la^*(x)+1$ for any $x,a$. Therefore if $x$ is written as a product $x=a_1a_2\cdots a_k$, then  $\la^*(x)\leq k$ by immediate induction. By choosing $k$ minimal, we have $\la^*(x)\leq \la(x)$ for any $x\in\Alt_{N}$. 

	Now we claim that for any $x\neq\id$ we can find $a$ such that $\la^*(xa)=\la^*(x)-1$. Indeed,  either $x$ has an odd cycle $(u_1\;u_2\;u_3\;\ldots)$ of length at least $3$, and we choose $a=(u_3\;u_2\;u_1)$; or $x$ has two even cycles $(u_1\;u_2\;\ldots)$ and $(v_1\;v_2\;\ldots)$ and we choose $a=(u_2\;u_1\;v_1)$. By immediate induction we obtain the existence of a decomposition $x=a_1a_2\cdots a_k$ with $k=\la^*(x)$. This shows the reverse inequality $\la^*(x)\geq \la(x)$, and we conclude that $\la(x)=\la^*(x)$ for all $x\in\Alt_{N}$.  
\end{proof}

If $t$ and $t'$ are distinct integers that belong to the same cycle of $x$, define $r(t,t')$ to be the smallest $r\geq 0$ such that $x^r(t)=t'$. Equivalently, $r(t,t')$ is one more than the number of elements appearing between $t$ and $t'$ in the cycle notation of $x$.  

\begin{proposition}\label{prop:summary_length}
	Let $x\in \Alt_N$, $a=(i\;j\;k)\in C_{3,N}$ and $\zeta_t$ be the cycle of $x$ containing $t$ for $t\in\{i,j,k\}$. 
	\begin{itemize}
		\item $\la(xa)=\la(x)+1$ if and only if one of the following holds:
		\begin{itemize}
			\item $\zeta_i,\zeta_j,\zeta_k$ are pairwise disjoint and at least two of them are odd;
			\item $\{\zeta_i,\zeta_j,\zeta_k\}$ has cardinality $2$, say $\zeta_i=\zeta_j\neq \zeta_k$, both cycles are odd, and $r(i,j)$ is odd.
		\end{itemize}  
		\item $\la(xa)=\la(x)-1$ if and only if one of the following holds:
		\begin{itemize}
			\item $\zeta_i=\zeta_j=\zeta_k=\zeta$; $i,k,j$ appear in this cyclic order in $\zeta$, and 
			at least two elements of the list $r(i,k),r(k,j),r(j,i)$ are odd;
			\item  $\{\zeta_i,\zeta_j,\zeta_k\}$ has cardinality $2$, say $\zeta_i=\zeta_j\neq \zeta_k$, both cycles are even, and $r(i,j)$ is odd. 
		\end{itemize} 
		\item $\la(xa)=\la(x)$ in all other cases.
	\end{itemize}
\end{proposition}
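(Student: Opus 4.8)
The plan is to recognize that Proposition~\ref{prop:summary_length} is a repackaging of the case analysis already carried out in the proof of Proposition~\ref{prop:alternating_length}, reformulated in terms of the cyclic-distance statistic $r(\cdot,\cdot)$ and of cycle parities. Since that proof establishes $\la=\la^*$ with $\la^*(x)=\tfrac{N-\oc(x)}{2}$, it suffices to revisit the three cases (i)--(iii) treated there for the difference $\la^*(xa)-\la^*(x)\in\{-1,0,+1\}$, read off exactly when the values $+1$ and $-1$ occur, and translate the conditions on the segment lengths $\lvert\seq_i\rvert,\lvert\seq_j\rvert,\lvert\seq_k\rvert$ into the language of the statement. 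The governing dictionary is: whenever two elements $t,t'$ lie in a common cycle $\zeta$ of $x$ and we write $\zeta$ so that a segment $\seq_{t'}$ runs from just after $t$ up to and including $t'$, the length of that segment is exactly $r(t,t')$; moreover the parity of $\zeta$ is the parity of the sum of the lengths of the segments into which it is cut.

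I would first dispose of case~(i), where three pairwise disjoint cycles are joined into one: the computation in Proposition~\ref{prop:alternating_length} yields $+1$ precisely when at least two of $\zeta_i,\zeta_j,\zeta_k$ are odd, and never $-1$, giving the first bullet under $\la(xa)=\la(x)+1$. For case~(ii), I would first use the cyclic symmetry $(i\;j\;k)=(j\;k\;i)=(k\;i\;j)$ to arrange $\zeta_i=\zeta_j\neq\zeta_k$; this reduction is legitimate because exactly one cyclic rotation of $a$ places the two co-cyclic letters first, which then fixes the ordered pair and hence the value of $r(i,j)$. With $\zeta_{ij}=(\seq_i\;\seq_j)$ I identify $\lvert\seq_j\rvert=r(i,j)$, $\lvert\seq_i\rvert=r(j,i)$, and $\lvert\seq_k\rvert=\lvert\zeta_k\rvert$, so that $\zeta_{ij}$ has length $r(i,j)+r(j,i)$ and its parity is determined by these two numbers. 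Feeding this into the subcase analysis of Proposition~\ref{prop:alternating_length} shows that $+1$ occurs iff $r(i,j)$ is odd, $r(j,i)$ is even, and $\zeta_k$ is odd, which is equivalent to ``$r(i,j)$ odd and both cycles odd''; and $-1$ occurs iff $r(i,j)$ and $r(j,i)$ are both odd and $\zeta_k$ is even, equivalently ``$r(i,j)$ odd and both cycles even'', matching the two remaining bullets.

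In case~(iii), with $\zeta_i=\zeta_j=\zeta_k=\zeta$, the first subcase (cyclic order $i,j,k$, i.e.\ $\zeta=(\seq_i\;\seq_j\;\seq_k)$) always gives $0$, so only the second subcase (cyclic order $i,k,j$, i.e.\ $\zeta=(\seq_i\;\seq_k\;\seq_j)$) can contribute. There $\zeta$ splits into three pieces of lengths $\lvert\seq_i\rvert=r(j,i)$, $\lvert\seq_j\rvert=r(k,j)$, and $\lvert\seq_k\rvert=r(i,k)$; the value $+1$ never occurs, and $-1$ occurs iff at least two of these lengths are odd, i.e.\ at least two of $r(i,k),r(k,j),r(j,i)$ are odd, which is the last bullet. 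All combinations not listed then fall into $\la(xa)=\la(x)$ by elimination, since $\la^*(xa)-\la^*(x)$ only ever takes the three values $-1,0,+1$.

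The proof is essentially bookkeeping, so the one genuine obstacle is notational care: pinning down the correspondence between the three cyclic segments and the correct \emph{ordered} pairs $r(t,t')$ (getting $r(i,j)$ versus $r(j,i)$ right, and likewise in the triple case), and verifying that the ``both cycles odd/even'' phrasing of the statement is exactly equivalent to the parity conditions on $r(i,j)$ and $r(j,i)$ produced by the computation. Once the dictionary is set up correctly and the legitimacy of the cyclic reduction in case~(ii) is recorded, each bullet follows by direct comparison.
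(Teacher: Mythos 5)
Your proposal is correct and follows exactly the route the paper takes: the paper's proof is the single remark that the statement ``follows immediately from the case analysis performed in the proof of Proposition~\ref{prop:alternating_length},'' and your write-up simply makes that translation explicit, with the dictionary $\lvert\seq_j\rvert=r(i,j)$, $\lvert\seq_i\rvert=r(j,i)$, etc.\ correctly identified in each case. The parity equivalences you check (e.g.\ that ``$r(i,j)$ odd, $r(j,i)$ even, $\zeta_k$ odd'' is the same as ``both cycles odd and $r(i,j)$ odd'') are all right, so nothing is missing.
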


The statement follows immediately from the case analysis performed in the proof of Proposition~\ref{prop:alternating_length}. As a corollary we get a description of the cover relations of $(\Alt_{N},\leqa)$, which fall into three categories.

\begin{corollary}\label{cor:covers}
	Let $y\in\Alt_{N}$. Then an element $x\in\Alt_{N}$ satisfies $x\lessdot_\bth y$ if and only if it is obtained by one of the following operations on $y$.
	\begin{enumerate}
		\item Pick an odd cycle of $y$ and split it into three odd cycles. \label{it:oddo}
		\item Pick an even cycle of $y$ and split it into two odd cycles and one even cycle. \label{it:odde}
		\item Pick two even cycles of $y$, split one into two odd cycles, and join one of these odd cycles with the second even cycle.\label{it:even}
	\end{enumerate}
\end{corollary}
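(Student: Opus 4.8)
The plan is to reduce the statement to Proposition~\ref{prop:summary_length} and then translate its arithmetic conditions into the combinatorial language of splitting and joining cycles. First I would record that, since $(\Alt_{N},\leqa)$ is graded by $\la$, the relation $x\lessdot_\bth y$ holds precisely when $x\leqa y$ and $\la(y)=\la(x)+1$. By the definition~\eqref{eq:order} of $\leqa$, this conjunction is equivalent to $x^{-1}y$ being a single $3$-cycle together with $\la(y)=\la(x)+1$; writing $b=y^{-1}x\in C_{3,N}$, this is in turn the same as $x=yb$ with $\la(yb)=\la(y)-1$. Thus describing the elements covered by $y$ amounts to describing which right multiplications of $y$ by a $3$-cycle lower its length by exactly one.

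Next I would apply Proposition~\ref{prop:summary_length}, read with $y$ in the role of $x$ and $b=(i\;j\;k)$. Its ``$\la(yb)=\la(y)-1$'' clause leaves exactly two possibilities: either $i,j,k$ lie in a common cycle $\zeta$ of $y$, appear there in the cyclic order $i,k,j$, and at least two of $r(i,k),r(k,j),r(j,i)$ are odd; or $i,j$ lie in one cycle and $k$ in a second, both even, with $r(i,j)$ odd. The effect of $b$ on the cycle structure of $y$ is read directly off the computations in the proof of Proposition~\ref{prop:alternating_length}: in case~(iii) the cycle $\zeta$ is split into the three arcs cut out by $i,k,j$, whereas in case~(ii) one cycle is split into two pieces and one of those pieces is joined to the other cycle.

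The remaining work is parity bookkeeping. In the first possibility the three arcs have lengths $r(i,k),r(k,j),r(j,i)$, which sum to $\lvert\zeta\rvert$. If $\zeta$ is odd, then ``at least two odd'' forces all three lengths to be odd, so $\zeta$ is split into three odd cycles, which is operation~\eqref{it:oddo}; if $\zeta$ is even, then ``at least two odd'' forces exactly two odd lengths, yielding two odd cycles and one even cycle, which is operation~\eqref{it:odde}. In the second possibility the even cycle carrying $i,j$ is split, because $r(i,j)$ is odd, into two odd cycles, one of which is then joined to the other even cycle, producing operation~\eqref{it:even}. For the reverse implication I would exhibit for each of the three operations an explicit witnessing $3$-cycle $b=(i\;j\;k)$ --- choosing $i,j,k$ as the last entries, in cycle notation, of the relevant arcs --- and check that the induced parities land us in the decreasing clause of Proposition~\ref{prop:summary_length}, so that $yb$ is genuinely covered by $y$.

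I expect the only delicate point to be this final bookkeeping: one must confirm that the oddness conditions on the quantities $r(\cdot,\cdot)$ correspond, in every case, to the parities of the cycles created by each operation, and that every admissible split (respectively split-and-join) is realized by some $3$-cycle, so that the correspondence between the covers of $y$ and the three operations is simultaneously exhaustive and reversible. Everything else is immediate from the grading of $(\Alt_{N},\leqa)$ and the case analysis already performed for Proposition~\ref{prop:alternating_length}.
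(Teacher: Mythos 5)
Your proposal is correct and follows essentially the same route as the paper: the paper derives the corollary directly from Proposition~\ref{prop:summary_length} (itself read off the case analysis in the proof of Proposition~\ref{prop:alternating_length}), which is exactly your reduction of covers to length-decreasing right multiplications by a $3$-cycle, followed by the same parity bookkeeping on the arc lengths $r(i,k),r(k,j),r(j,i)$ and $r(i,j)$. The paper leaves all of this implicit, so your write-up is simply a more explicit version of the intended argument, including the (correct) observation that the converse requires exhibiting a witnessing $3$-cycle for each operation.
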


\begin{example} Let us give examples of all three types of covers. These were written on purpose in a way that makes it non-trivial to check, and we advise the reader to do so.  These examples correspond (in order) to the three cases of Corollary~\ref{cor:covers}:
\begin{displaymath}\begin{aligned}
	& (2\;6\;3)(4)(7\;12\;5\;8\;11)(1)(9)(10) && \lessdot_\bth && (5\;8\;11\;4\;6\;3\;2\;7\;12)(1)(9)(10);\\
	& (2\;6\;3)(4)(7\;12\;9\;5\;8\;11)(1\;10) && \lessdot_\bth && (5\;8\;11\;4\;6\;3\;2\;7\;12\;9)(1\;10);\\
	& (7\;12\;9\;6\;3\;2\;1\;10\;11)(4\;5\;8) && \lessdot_\bth && (5\;8\;4\;1\;10\;11)(6\;3\;2\;7\;12\;9).
\end{aligned}\end{displaymath}
\end{example}

\begin{remark}\label{rem:odd_below} 
	An immediate consequence of Corollary~\ref{cor:covers} is that if $y$ only has odd cycles in its cycle decomposition, then the same is true for all elements $x\leq_\bth y$. Indeed, we can show by immediate induction that only covers of type~\eqref{it:oddo} can occur.
\end{remark}

\begin{proposition}\label{prop:interval_decomposition}
	Let $x\in\Alt_{N}$. Let $\zeta_1,\zeta_2,\ldots,\zeta_k$ be the odd cycles of $x$ and let $\xi$ be the product of its even cycles. Then the product map $(\zeta_1,\zeta_2,\ldots,\zeta_k,\xi)\mapsto \zeta_1\zeta_2\cdots\zeta_k\xi$ induces an isomorphism of graded posets:
	\begin{displaymath}
		 [e,\zeta_1]_{\bth}\times  [e,\zeta_2]_{\bth}\times \cdots \times [e,\zeta_k]_{\bth}\times [e,\xi]_{\bth} \cong [e,x]_{\bth}.
	\end{displaymath}
\end{proposition}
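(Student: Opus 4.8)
The plan is to exploit the strong locality of the cover relations in Corollary~\ref{cor:covers}. Write $B_1,\dots,B_k$ for the point sets of the odd cycles $\zeta_1,\dots,\zeta_k$ (a fixed point being its own singleton block) and $B_0=\operatorname{supp}(\xi)$, so that $B_0,B_1,\dots,B_k$ partition $[N]$. The heart of the argument is the following claim, which I would prove by downward induction on $\la(x)-\la(w)$: every $w\leqa x$ stabilizes each block setwise, and moreover the restriction of $w$ to each odd block $B_i$ with $1\le i\le k$ has only odd cycles. The base case $w=x$ is immediate. For the inductive step I take a cover $w\lessdot_\bth w'$ with $w'$ satisfying the claim and apply Corollary~\ref{cor:covers}: the first operation splits one (necessarily block-confined) odd cycle into three odd cycles inside the same block, while the second and third operations act only on even cycles of $w'$. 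Since by the inductive hypothesis no even cycle of $w'$ can lie inside an odd block $B_i$, those even cycles live in $B_0$, so all three operations preserve the block partition and keep the odd blocks all-odd; here Remark~\ref{rem:odd_below} is the conceptual reason that an odd block never acquires an even cycle.

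Granting the claim, every $w\leqa x$ decomposes uniquely as $w=w_0w_1\cdots w_k$, where $w_i$ is the restriction of $w$ to $B_i$ extended by the identity. I would next record the key additivity of length: directly from $\la=\tfrac{N-\oc}{2}$ in Proposition~\ref{prop:alternating_length}, any block-stabilizing element $w$ satisfies $\la(w)=\sum_{i=0}^{k}\la(w_i)$, because its odd cycles are counted block by block and the points outside a block count as fixed (odd) cycles. Applying this to $w$ and to $w^{-1}x$, which is again block-stabilizing, the defining equation \eqref{eq:order} for $w\leqa x$ reads $\sum_{i}\la(\zeta_i')=\sum_{i}\bigl(\la(w_i)+\la(w_i^{-1}\zeta_i')\bigr)$, where $\zeta_0'=\xi$ and $\zeta_i'=\zeta_i$ for $i\ge 1$. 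Since each summand on the right dominates the corresponding term on the left by subadditivity of $\la$, equality of the totals forces equality term by term, that is $w_i\leqa\zeta_i'$ for all $i$. Thus $w\mapsto(w_1,\dots,w_k,w_0)$ maps $[e,x]_\bth$ into the product of intervals, and it is a two-sided inverse to $f$ because the factors have pairwise disjoint supports. Running the same additivity computation in reverse (using that each $\zeta_i'$ is either a single odd cycle or $\xi$, so the claim applied to $\zeta_i'$ confines $w_i$ to $B_i$) shows $f$ is well defined, i.e. any admissible tuple multiplies to an element $\leqa x$; together these give that $f$ is a bijection.

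To upgrade $f$ to an isomorphism of graded posets I would apply the identical additivity-plus-subadditivity trick to a comparison of two elements. If $w,w'\in[e,x]_\bth$, then both stabilize every block, hence so does $w^{-1}w'$, and $\la(w')=\la(w)+\la(w^{-1}w')$ expands into a sum of the block equations $\la(w_i')=\la(w_i)+\la(w_i^{-1}w_i')$; equal totals with termwise domination again yield the equivalence of $w\leqa w'$ with the conjunction of the relations $w_i\leqa w_i'$ for all $i$, which is exactly the componentwise order on the product poset. Rank preservation is immediate from the same additivity $\la(w)=\sum_i\la(w_i)$, so $f$ is an isomorphism of graded posets.

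I expect the main obstacle to be the inductive claim of the first paragraph, and specifically the verification that the third operation of Corollary~\ref{cor:covers} can never join an even cycle sitting in an odd block with one in $B_0$. This is precisely where the all-odd-cycle confinement of the odd blocks, guaranteed by Remark~\ref{rem:odd_below}, is indispensable, and making the induction hypothesis strong enough to propagate this confinement is the delicate point. Everything after that is a mechanical consequence of the length formula $\la=\tfrac{N-\oc}{2}$.
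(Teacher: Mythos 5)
Your proof is correct, and its core step --- a descent through the cover relations of Corollary~\ref{cor:covers}, using Remark~\ref{rem:odd_below} to confine all even cycles to the support of $\xi$ --- is exactly the paper's own argument for the surjectivity of $f$. Where you go beyond the paper is in making the order-isomorphism explicit: the paper dispatches well-definedness, injectivity and order-preservation of $f$ and $f^{-1}$ with ``clearly'' and ``the same analysis shows,'' whereas your block-additivity $\la(w)=\sum_i\la(w_i)$ (valid since each block restriction is even, the odd blocks carrying only odd cycles) combined with subadditivity forcing termwise equality gives both directions of the order comparison in one clean stroke. This is a rigorous expansion of the paper's proof rather than a genuinely different route, and there is no gap in it.
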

\begin{proof}
	For $i\in[k]$ let $S_{i}=\mathrm{supp}(\zeta_{i})$, and let $S=\mathrm{supp}(\xi)$.  Since $x$ is a permutation, the set $\{S_{1},S_{2},\ldots,S_{k},S\}$ is a partition of $[N]$.  Hence, the product map
	\begin{displaymath}
		f\colon\Alt_{S_{1}}\times\Alt_{S_{2}}\times\cdots\times\Alt_{S_{k}}\times\Alt_{S}\to\Alt_{N}, \quad (x_{1},x_{2},\ldots,x_{k},y)\mapsto x_{1}x_{2}\cdots x_{k}y
	\end{displaymath}
	is well defined, injective and order preserving. Here if $X$ is any finite set, $\Sym_{X}$ is the group of permutations of $X$, and $\Alt_{X}$ is the corresponding alternating subgroup.

	Now let $y\leq_\bth x$. We will prove by induction on $\la(x)-\la(y)$ that $y$ is in the image of $f$. If $\la(y)=\la(x)$ then $x=y$ and $f(\zeta_1,\ldots,\zeta_k,\xi)=y$. If $\la(y)<\la(x)$ then pick $x'$ such that $y\lessdot_\bth x'\leqa x$; by induction $f(x'_1,x'_2,\ldots,x'_k,y')=x'$ where $x'_i\leq_\bth \zeta_i$ for $i\in[k]$ and $y'\leqa \xi$. By Remark \ref{rem:odd_below}, all the $x'_i$ have only odd cycles. Therefore the possible even cycles of $x'$ are in $\xi$. It follows then from Corollary \ref{cor:covers}  that $y\lessdot_\bth x'$ is also in the image of $f$, so $f$ is surjective. The same analysis shows also that $f^{-1}$ preserves the ordering.
\end{proof}

\begin{remark}
	It should be clear from Corollary~\ref{cor:covers} that we cannot hope for a simple decomposition for products of even cycles.  If we split the even cycles of $y\in\Alt_{N}$ into groups of even sizes (so that we remain in $\Alt_{N}$), then the direct product of the resulting induced intervals is not isomorphic to the original one.  A little thought shows that it can be identified (via the same product map as above) as a proper subposet  of $[e,y]_{\bth}$.

 Consider for instance $y=(1\;2)(3\;4)(5\;6)(7\;8)\in\Alt_{8}$, and let $x_{1}=(1\;2)(3\;4)$ and $x_{2}=(5\;6)(7\;8)$.  The interval $[e,y]_{\bth}$ has $296$ elements, while the two (isomorphic) intervals $[e,x_{1}]_{\bth}$ and $[e,x_{2}]_{\bth}$ each have $10$ elements.  In particular $[e,y]_{\bth}\not\cong[e,x_{1}]_{\bth}\times[e,x_{2}]_{\bth}$.  
\end{remark}

\section{Noncrossing Partitions}
	\label{sec:alternating_noncrossing}

Let $c=(1\;2\;\ldots\;N)$ be a long cycle of $\Sym_{N}$.  It follows from \cite{biane97some} that the interval $[\id,c]_{\btw}$ in $(\Sym_{N},\leqt)$ is isomorphic to the lattice of noncrossing set partitions of $[N]$ defined in \cite{kreweras72sur}: these are the set partitions $\Pi$ of $[N]$ such that given any two distinct blocks $B_1,B_2$ of $\Pi$, there are no indices $i,j$ in $B_1$ and $k,l$ in $B_2$ satisfying $i<k<j<l$. If each block of $\Pi$ is ordered increasingly and considered as a cycle by extending this order cyclically, then by \cite{biane97some}*{Theorem~1.1} one obtains a permutation $x\leqt c$, and each such permutation can be uniquely obtained in this way.

By a slight abuse of vocabulary, we shall call a permutation $x\in\Sym_{N}$ with $x\leqt c$ a \defn{noncrossing partition}.  Let us write $\nc_{N}$ for the set of all noncrossing partitions of $\Sym_{N}$; and let $\pnc_{N}=(\nc_{N},\leqt)$.  

Let us now consider the set $\Alto_{N}\subseteq\Alt_{N}$ of permutations having only odd cycles. It is clear from Proposition~\ref{prop:interval_decomposition} and the remark following it that this subset plays a special role for the order $\leqa$. In this section we will investigate the structure of intervals in the poset $(\Alto_{N},\leqa)$. In Section~\ref{sec:hurwitz} we will also prove that this subset occurs naturally in the context of the Hurwitz action. 

\begin{theorem}\label{thm:AAo_embedding}
	For $N\geq 3$, and $x\in\Alto_{N}$, the interval $[e,x]_{\bth}$ is an induced subposet of the interval $[e,x]_{\btw}$ in $(\Sym_{N},\leqt)$.
\end{theorem}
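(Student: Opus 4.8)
The plan is to reduce the statement to a single numerical criterion. For any $w\in\Alt_N$, Proposition~\ref{prop:alternating_length} gives $\la(w)=\tfrac{N-\oc(w)}{2}$, so that $\la(w)-\tfrac12\lt(w)=\tfrac12\bigl(\cyc(w)-\oc(w)\bigr)$ is half the number of even cycles of $w$; in particular $\la(w)\ge\tfrac12\lt(w)$, with equality if and only if $w\in\Alto_N$. Now take $u,v\in\Alto_N$. Substituting $\la(u)=\tfrac12\lt(u)$ and $\la(v)=\tfrac12\lt(v)$ into the definition $u\leqa v\iff\la(v)=\la(u)+\la(u^{-1}v)$ yields $\lt(v)=\lt(u)+\lt(u^{-1}v)+\bigl(\cyc(u^{-1}v)-\oc(u^{-1}v)\bigr)$. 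Since $\lt(v)\le\lt(u)+\lt(u^{-1}v)$ always and the last term is nonnegative, equality forces both $u\leqt v$ and $u^{-1}v\in\Alto_N$; conversely these two conditions give $u\leqa v$. Thus, for $u,v\in\Alto_N$,
\[
u\leqa v\quad\Longleftrightarrow\quad u\leqt v\ \text{ and }\ u^{-1}v\in\Alto_N .
\]
Applying this with $v=x$ (and using $[\id,x]_\bth\subseteq\Alto_N$ from Remark~\ref{rem:odd_below}) shows $[\id,x]_\bth\subseteq[\id,x]_\btw$, while the implication $u\leqa v\Rightarrow u\leqt v$ is exactly the forward half of the induced-subposet claim.

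It remains to prove the converse: if $u,v\in[\id,x]_\bth$ and $u\leqt v$, then $u\leqa v$; by the criterion this is the assertion that $u^{-1}v\in\Alto_N$. First I would reduce to the case of a single cycle. The interval $[\id,x]_\btw$ factors as a product of noncrossing-partition lattices over the cycles of $x$ \cite{biane97some}, while $[\id,x]_\bth$ factors over the same cycles by Proposition~\ref{prop:interval_decomposition} (every cycle of $x$ is odd, so there is no even part). Both factorizations are induced by the same product map and are therefore compatible, so the implication for $(u,v)$ follows from the corresponding implication for each pair of cycle-components. We may thus assume $x=c=(1\;2\;\cdots\;m)$ with $m$ odd and $u,v\in[\id,c]_\bth\subseteq\pnc_m$.

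The heart of the argument is a structural description of $[\id,c]_\bth$ inside $\pnc_m$: I claim it consists exactly of the noncrossing partitions all of whose blocks are odd cyclic intervals (``arcs''). For an arc partition with arcs $A_1,\dots,A_r$, the associated permutation $w$ agrees with $c$ except at the $r$ arc-endpoints, and a direct computation shows that $w^{-1}c$ is the single $r$-cycle through those endpoints together with $m-r$ fixed points; as $r\equiv m\equiv1\pmod 2$ this cycle is odd, so $w^{-1}c\in\Alto_m$ and the arc partition lies in $[\id,c]_\bth$. Granting the converse inclusion discussed below, $u$ and $v$ are arc partitions with $u\leqt v$, so the arcs of $u$ subdivide those of $v$. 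Restricting to a single arc $B$ of $v$, the corresponding factor of $u^{-1}v$ is the Kreweras complement of the arc partition $u|_B$ inside the odd cycle $c|_B$, which by the same computation is an odd cycle plus fixed points. Hence $u^{-1}v\in\Alto_N$, giving $u\leqa v$ and finishing the proof.

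The main obstacle is precisely this converse inclusion: that every element of $[\id,c]_\bth$ is an arc partition, equivalently that an all-odd noncrossing partition which is not an arc partition has an even block in its Kreweras complement. This is where the full strength of membership in $[\id,c]_\bth$ must be used, since the naive statement ``$u^{-1}v\in\Alto_N$ whenever $u,v\in\Alto_N$ and $u\leqt v$'' is false: for instance $u=(1\;3\;5)\leqt(1\;2\;3\;4\;5)=v$ has $u^{-1}v=(1\;2)(3\;4)$, and here $u$ is the non-arc all-odd partition $\{1,3,5\}\{2\}\{4\}$ whose Kreweras complement is even, so $u\notin[\id,v]_\bth$. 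I expect to establish the converse inclusion by induction on $m$ (or on the number of blocks), peeling off an outermost block and showing that any nested region forces the Kreweras complement to join elements into a cycle of even length; keeping track of the even cycles created by such nestings is the delicate point.
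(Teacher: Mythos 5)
Your opening reduction is sound and matches the paper's strategy in spirit: the identity $\la(w)-\tfrac12\lt(w)=\tfrac12\bigl(\cyc(w)-\oc(w)\bigr)$ gives exactly the criterion $u\leqa v\iff u\leqt v$ and $u^{-1}v\in\Alto_{N}$ for $u,v\in\Alto_{N}$ (this is Proposition~\ref{prop:AAo_first_properties} combined with the argument of Proposition~\ref{prop:AAo_subposet} and the closing step of Theorem~\ref{thm:enc_subposet}), and the reduction to a single odd cycle via the compatible product decompositions is fine. The proof breaks, however, at the structural claim on which everything after that rests: it is \emph{false} that $[\id,c]_{\bth}$ consists of the noncrossing partitions whose blocks are odd cyclic intervals. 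For $c=(1\;2\;3\;4\;5\;6\;7)$ one has $c=(1\;4\;7)(1\;2\;3)(4\;5\;6)$, a length-additive factorization, so $(1\;4\;7)\leqa c$ (it appears as an atom in Figure~\ref{fig:enc_3_poset}), yet $\{1,4,7\}$ is not a cyclic interval; note also that its Kreweras complement $(1\;2\;3)(4\;5\;6)$ has only odd cycles, so the ``equivalent'' reformulation you give of the missing converse is false as well. The correct description of $[\id,c]_{\bth}$ is Property~\eqref{it:od}: blocks of odd size whose consecutive elements, in increasing order, differ by an \emph{odd} amount (Proposition~\ref{prop:onc_characterization}) --- a strictly larger class than the arc partitions.

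Consequently the endgame collapses twice over: the step you flag as ``the main obstacle'' is not merely unproven but unprovable as stated, and even if the characterization were corrected, your argument that ``the arcs of $u$ subdivide those of $v$, restrict to a single arc'' has nothing to restrict to, since blocks satisfying \eqref{it:od} need not be intervals. What is actually needed to close the loop with your (correct) criterion is that the class $\enc_{N}$ is stable under the Kreweras complement $K_{y}$ for $y\in\enc_{N}$; the paper proves this (Proposition~\ref{prop:kreweras_invariance}) via the pointwise reformulation of \eqref{it:od} in Lemma~\ref{lem:value_characterization}, and an alternative route is the Goulden--Jackson tree bijection sketched in Remark~\ref{rem:altproof}. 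Without a proof of the correct characterization of $[\id,c]_{\bth}$ and of its Kreweras stability, the proposal does not establish the theorem.
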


The aim of this section is to prove Theorem~\ref{thm:AAo_embedding}.  We start with a few observations. The first follows from Equation~\eqref{eq:reflection_length}, Corollary~\ref{cor:covers}\eqref{it:oddo}, as well as Propositions~\ref{prop:alternating_length} and ~\ref{prop:interval_decomposition}.

\begin{proposition}\label{prop:AAo_first_properties}
	$\Alto_{N}$ is a lower ideal of $(\Alt_N,\leq_A)$ and $\la(x)=\tfrac{\lt(x)}{2}$ for any $x\in \Alto_{N}$. \\
Moreover $x\lessdot_A y$ in $\Alto_{N}$ if and only $x$ is obtained from $y$ by splitting an odd cycle of $y$ into three odd cycles of $x$.
\end{proposition}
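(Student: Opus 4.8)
The plan is to treat the three assertions of the proposition separately, each by a short appeal to results already in hand; the only genuine work lies in the lower-ideal claim, after which the length identity and the description of covers follow almost immediately. First I would prove that $\Alto_N$ is a lower ideal of $(\Alt_N,\leqa)$. This is exactly what Remark~\ref{rem:odd_below} records: if $y\in\Alto_N$ and $x\leqa y$, then along any saturated chain from $x$ to $y$ each cover must be of type~\eqref{it:oddo} in Corollary~\ref{cor:covers}, because the two remaining cover types~\eqref{it:odde} and~\eqref{it:even} require the larger element to possess an even cycle, which $y$ and, by induction, all intermediate elements lack. Since splitting an odd cycle into three odd cycles again produces a permutation all of whose cycles are odd, one concludes $x\in\Alto_N$, so every element below a member of $\Alto_N$ lies in $\Alto_N$. (This is also consistent with the product decomposition of Proposition~\ref{prop:interval_decomposition}, in which an interval below an element of $\Alto_N$ involves only the odd-cycle factors.)

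For the length identity I would invoke Proposition~\ref{prop:alternating_length}, which gives $\la(x)=\tfrac{N-\oc(x)}{2}$ for every $x\in\Alt_N$. When $x\in\Alto_N$ all cycles of $x$ are odd, so $\oc(x)=\cyc(x)$, and therefore $\la(x)=\tfrac{N-\cyc(x)}{2}=\tfrac{\lt(x)}{2}$ by Equation~\eqref{eq:reflection_length}.

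Finally, for the cover description I would use the lower-ideal property just established: a relation $x\lessdot_\bth y$ with $x,y\in\Alto_N$ is precisely a cover of the ambient poset both of whose endpoints lie in $\Alto_N$, since no intermediate element can escape the ideal. Applying Corollary~\ref{cor:covers} with $y\in\Alto_N$, the element $y$ has no even cycle, so operations~\eqref{it:odde} and~\eqref{it:even} cannot be performed and only operation~\eqref{it:oddo}, splitting an odd cycle of $y$ into three odd cycles, remains; conversely, any such split yields a cover in $(\Alt_N,\leqa)$ whose lower endpoint lies in $\Alto_N$. The main (and essentially only) obstacle is conceptual rather than computational: the characterization genuinely relies on the lower-ideal claim to rule out covers whose \emph{lower} element leaves $\Alto_N$, so I would be careful to establish that claim first and invoke it explicitly at this last step.
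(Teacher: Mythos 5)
Your proposal is correct and follows essentially the same route as the paper, which simply cites Equation~\eqref{eq:reflection_length}, Corollary~\ref{cor:covers}\eqref{it:oddo}, and Propositions~\ref{prop:alternating_length} and~\ref{prop:interval_decomposition} without spelling out the details; your write-up is a faithful expansion of exactly those ingredients (with Remark~\ref{rem:odd_below} standing in for the inductive use of Corollary~\ref{cor:covers}). You also correctly note the small point the paper leaves implicit, namely that the lower-ideal property guarantees covers in the induced subposet agree with covers in the ambient poset.
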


Next we show that $(\Alto_N,\leqa)$ is a subposet of $(\Sym_N,\leqt)$, in the sense that inclusion is a poset morphism.

\begin{proposition}\label{prop:AAo_subposet}
	For any $x,y\in\Alto_{N}$ we have that $x\leqa y$ implies $x\leqt y$. 
\end{proposition}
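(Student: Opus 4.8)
The plan is to reduce the statement to a comparison of the two length functions $\la$ and $\lt$ and then to invoke the subadditivity of $\lt$. The bridge between the two orders is the elementary identity, valid for every $z\in\Alt_{N}$,
\begin{displaymath}
	2\la(z) = \lt(z) + \bigl(\cyc(z)-\oc(z)\bigr),
\end{displaymath}
which follows by writing $2\la(z)=N-\oc(z)$ (Proposition~\ref{prop:alternating_length}) and $\lt(z)=N-\cyc(z)$ (Equation~\eqref{eq:reflection_length}). Here $\cyc(z)-\oc(z)\geq 0$ is exactly the number of even-length cycles of $z$, so $2\la(z)\geq\lt(z)$ always, with equality precisely when $z$ has no even cycle; in particular equality holds on $\Alto_{N}$, recovering the formula $\la=\tfrac{1}{2}\lt$ of Proposition~\ref{prop:AAo_first_properties}.

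Next I would apply this identity to each of $x$, $y$, and $x^{-1}y$ (all three lie in $\Alt_{N}$ since it is a group). Because $x,y\in\Alto_{N}$ have no even cycle, $2\la(x)=\lt(x)$ and $2\la(y)=\lt(y)$. By \eqref{eq:order} the hypothesis $x\leqa y$ unwinds to $\la(y)=\la(x)+\la(x^{-1}y)$; doubling this equation and substituting the identity above for each term yields
\begin{displaymath}
	\lt(y) = \lt(x) + \lt(x^{-1}y) + \bigl(\cyc(x^{-1}y)-\oc(x^{-1}y)\bigr).
\end{displaymath}

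Finally I would confront this with the subadditivity of the word length $\lt$: since $y=x\,(x^{-1}y)$, concatenating reduced transposition words gives $\lt(y)\leq\lt(x)+\lt(x^{-1}y)$. Comparing with the displayed equation forces the nonnegative quantity $\cyc(x^{-1}y)-\oc(x^{-1}y)$ to be $\leq 0$, hence $0$; that is, $x^{-1}y$ has only odd cycles. The equation then collapses to $\lt(y)=\lt(x)+\lt(x^{-1}y)$, which is precisely $x\leqt y$. The argument has essentially no obstacle beyond spotting the right bookkeeping identity: once one observes that the ``defect'' $2\la(z)-\lt(z)$ equals the number of even cycles of $z$, subadditivity of $\lt$ does all the remaining work, and as a bonus the proof shows that $x^{-1}y\in\Alto_{N}$ whenever $x\leqa y$ with $x,y\in\Alto_{N}$.
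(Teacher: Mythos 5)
Your proof is correct, and it takes a genuinely different (and slightly more self-contained) route than the paper's. The paper first establishes that $x^{-1}y\in\Alto_{N}$ as a \emph{prerequisite}: it invokes Proposition~\ref{prop:generalized_kreweras} (conjugation-closure of $C_{3,N}$ gives $x^{-1}y\leqa y$) together with the lower-ideal property of $\Alto_{N}$ from Proposition~\ref{prop:AAo_first_properties}, and only then doubles the equation $\la(y)=\la(x)+\la(x^{-1}y)$ using $2\la=\lt$ on all three elements. You instead bypass the Kreweras-complement step entirely: the bookkeeping identity $2\la(z)-\lt(z)=\cyc(z)-\oc(z)\geq 0$, valid for every $z\in\Alt_{N}$, combined with the subadditivity $\lt(y)\leq\lt(x)+\lt(x^{-1}y)$, forces the defect of $x^{-1}y$ to vanish. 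This buys you an argument that relies only on the two explicit length formulas plus the trivial subadditivity of word length, and it recovers $x^{-1}y\in\Alto_{N}$ as a \emph{byproduct} rather than needing it as input. The paper's version is shorter given the machinery already in place, but yours is more elementary and would survive even if one had not yet proved that $\Alto_{N}$ is a lower order ideal.
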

\begin{proof}
	Let $x,y\in\Alto_{N}$ with $x\leqa y$.  Since $C_{3,N}$ is closed under $\Alt_{N}$-conjugation Proposition~\ref{prop:generalized_kreweras} implies $x^{-1}y\leqa y$.  By definition we have 
	\begin{displaymath}
		\la(y)=\la(x)+\la(x^{-1}y).
	\end{displaymath}
	All elements belong to $\Alto_{N}$, so we can multiply this equation by two and use Proposition~\ref{prop:AAo_first_properties} to obtain
	\begin{displaymath}
		\lt(y) = \lt(x)+\lt(x^{-1}y),
	\end{displaymath}
	which means precisely that $x\leqt y$.
\end{proof}

\begin{remark}
	The reverse implication does not hold in general.  We have for instance $(2\;4\;5)\leqt (1\;2\;3\;4\;5)$ but $(2\;4\;5)\not\leqa (1\;2\;3\;4\;5)$. Theorem~\ref{thm:AAo_embedding} says that it holds precisely when $x$ and $y$ are both smaller than a given element of $\Alto_N$.
\end{remark}
 
Since any two elements in $\Alto_{N}$ that have the same cycle type are $\Sym_{N}$-conjugate, Proposition~\ref{prop:invariant_conjugation} states that they induce isomorphic intervals in $(\Alto_{N},\la)$.  Moreover, if $y\in\Alto_{N}$ has cycle type $\lambda_y=(2k_1+1,2k_2+1,\ldots,2k_r+1)$, then Proposition~\ref{prop:interval_decomposition} states that 
\begin{displaymath}
	[\id,y]_{\bth} \cong \prod_{i=1}^{r}\bigl[\id,(1\;2\;\ldots\;2k_i\!+\!1)\bigr]_{\bth}.
\end{displaymath}
If we want to understand the intervals in $(\Alto_{N},\leq_{\bth})$ it is therefore sufficient to consider those intervals induced by a single cycle, and we might as well pick our favorite one.  Clearly, the cycle $c=(1\;2\;\ldots\;N)$ belongs to $\Alto_{N}$ if and only if $N$ is odd.  We now define a particular subset of $\nc_{N}$ in terms of the following property for an increasing cycle $(u_{1}<u_{2}<\cdots<u_{q})$:
\begin{description}
	\item[\namedlabel{it:od}{OD}] $q$ is odd, and $u_{j+1}-u_{j}$ is odd for all $j\in[q-1]$.
\end{description}
We say that $x\in\nc_{N}$ satisfies Property~\eqref{it:od} if all its cycles do.

\begin{definition}\label{def:even_nc}
	For $N\geq 3$, we define $\enc_{N}$ to be the set of all $x\in\nc_{N}$ which satisfy Property~\eqref{it:od}.
\end{definition}

\begin{figure}
	\centering
	\begin{tikzpicture}[scale=.75]
		\def\x{1.15};
		\def\y{3};
		\def\s{.45};
		\draw(7.5*\x,1.25*\y) node[scale=\s](n1){$(1)$};
		\draw(1*\x,2*\y) node[scale=\s](n2){$(1\;6\;7)$};
		\draw(2*\x,2*\y) node[scale=\s](n3){$(2\;3\;6)$};
		\draw(3*\x,2*\y) node[scale=\s](n4){$(4\;5\;6)$};
		\draw(4*\x,2*\y) node[scale=\s](n5){$(1\;4\;7)$};
		\draw(5*\x,2*\y) node[scale=\s](n6){$(2\;3\;4)$};
		\draw(6*\x,2*\y) node[scale=\s](n7){$(2\;5\;6)$};
		\draw(7*\x,2*\y) node[scale=\s](n8){$(1\;2\;7)$};
		\draw(8*\x,2*\y) node[scale=\s](n9){$(3\;4\;7)$};
		\draw(9*\x,2*\y) node[scale=\s](n10){$(5\;6\;7)$};
		\draw(10*\x,2*\y) node[scale=\s](n11){$(1\;2\;5)$};
		\draw(11*\x,2*\y) node[scale=\s](n12){$(3\;4\;5)$};
		\draw(12*\x,2*\y) node[scale=\s](n13){$(3\;6\;7)$};
		\draw(13*\x,2*\y) node[scale=\s](n14){$(1\;2\;3)$};
		\draw(14*\x,2*\y) node[scale=\s](n15){$(1\;4\;5)$};
		\draw(1*\x,3*\y) node[scale=\s](n16){$(1\;6\;7)(3\;4\;5)$};
		\draw(2*\x,3*\y) node[scale=\s](n17){$(1\;2\;3\;6\;7)$};
		\draw(3*\x,3*\y) node[scale=\s](n18){$(1\;2\;3)(4\;5\;6)$};
		\draw(4*\x,3*\y) node[scale=\s](n19){$(1\;4\;5\;6\;7)$};
		\draw(5*\x,3*\y) node[scale=\s](n20){$(1\;6\;7)(2\;3\;4)$};
		\draw(6*\x,3*\y) node[scale=\s](n21){$(2\;3\;4\;5\;6)$};
		\draw(7*\x,3*\y) node[scale=\s](n22){$(1\;2\;7)(4\;5\;6)$};
		\draw(8*\x,3*\y) node[scale=\s](n23){$(1\;2\;3\;4\;7)$};
		\draw(9*\x,3*\y) node[scale=\s](n24){$(2\;3\;4)(5\;6\;7)$};
		\draw(10*\x,3*\y) node[scale=\s](n25){$(1\;2\;5\;6\;7)$};
		\draw(11*\x,3*\y) node[scale=\s](n26){$(1\;2\;7)(3\;4\;5)$};
		\draw(12*\x,3*\y) node[scale=\s](n27){$(3\;4\;5\;6\;7)$};
		\draw(13*\x,3*\y) node[scale=\s](n28){$(1\;2\;3)(5\;6\;7)$};
		\draw(14*\x,3*\y) node[scale=\s](n29){$(1\;2\;3\;4\;5)$};
		\draw(7.5*\x,3.75*\y) node[scale=\s](n30){$(1\;2\;3\;4\;5\;6\;7)$};
		\draw(n1) -- (n2);
		\draw(n1) -- (n3);
		\draw(n1) -- (n4);
		\draw(n1) -- (n5);
		\draw(n1) -- (n6);
		\draw(n1) -- (n7);
		\draw(n1) -- (n8);
		\draw(n1) -- (n9);
		\draw(n1) -- (n10);
		\draw(n1) -- (n11);
		\draw(n1) -- (n12);
		\draw(n1) -- (n13);
		\draw(n1) -- (n14);
		\draw(n1) -- (n15);
		\draw(n2) -- (n16);
		\draw(n2) -- (n17);
		\draw(n2) -- (n19);
		\draw(n2) -- (n20);
		\draw(n2) -- (n25);
		\draw(n3) -- (n17);
		\draw(n3) -- (n21);
		\draw(n4) -- (n18);
		\draw(n4) -- (n19);
		\draw(n4) -- (n21);
		\draw(n4) -- (n22);
		\draw(n4) -- (n27);
		\draw(n5) -- (n19);
		\draw(n5) -- (n23);
		\draw(n6) -- (n20);
		\draw(n6) -- (n21);
		\draw(n6) -- (n23);
		\draw(n6) -- (n24);
		\draw(n6) -- (n29);
		\draw(n7) -- (n21);
		\draw(n7) -- (n25);
		\draw(n8) -- (n17);
		\draw(n8) -- (n22);
		\draw(n8) -- (n23);
		\draw(n8) -- (n25);
		\draw(n8) -- (n26);
		\draw(n9) -- (n23);
		\draw(n9) -- (n27);
		\draw(n10) -- (n19);
		\draw(n10) -- (n24);
		\draw(n10) -- (n25);
		\draw(n10) -- (n27);
		\draw(n10) -- (n28);
		\draw(n11) -- (n25);
		\draw(n11) -- (n29);
		\draw(n12) -- (n16);
		\draw(n12) -- (n21);
		\draw(n12) -- (n26);
		\draw(n12) -- (n27);
		\draw(n12) -- (n29);
		\draw(n13) -- (n17);
		\draw(n13) -- (n27);
		\draw(n14) -- (n17);
		\draw(n14) -- (n18);
		\draw(n14) -- (n23);
		\draw(n14) -- (n28);
		\draw(n14) -- (n29);
		\draw(n15) -- (n19);
		\draw(n15) -- (n29);
		\draw(n16) -- (n30);
		\draw(n17) -- (n30);
		\draw(n18) -- (n30);
		\draw(n19) -- (n30);
		\draw(n20) -- (n30);
		\draw(n21) -- (n30);
		\draw(n22) -- (n30);
		\draw(n23) -- (n30);
		\draw(n24) -- (n30);
		\draw(n25) -- (n30);
		\draw(n26) -- (n30);
		\draw(n27) -- (n30);
		\draw(n28) -- (n30);
		\draw(n29) -- (n30);
	\end{tikzpicture}
	\caption{The poset $\penc_{7}=(\enc_{7},\leqa)$.}
	\label{fig:enc_3_poset}
\end{figure}

Figure~\ref{fig:enc_3_poset} shows the poset $\penc_{7}=(\enc_{7},\leqa)$.  The next result states that when $N$ is odd, the set $\enc_{N}$ in fact induces an interval in $(\Alt_{N},\leqa)$. 
	
\begin{proposition}\label{prop:onc_characterization}
	A permutation $x\in\Alt_{2n+1}$ satisfies $x\leqa (1\;2\;\ldots\;2n+1)$ if and only if $x\in\enc_{2n+1}$.
\end{proposition}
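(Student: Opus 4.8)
The plan is to prove both inclusions between $[\id,c]_{\bth}$ and $\enc_{2n+1}$ by analyzing covers, after recording the two easy properties. For the forward direction, note first that $c=(1\;2\;\ldots\;2n+1)$ lies in $\Alto_{2n+1}$, so Remark~\ref{rem:odd_below} already shows that any $x\leqa c$ has only odd cycles, and Proposition~\ref{prop:AAo_subposet} gives $x\leqt c$, that is, $x\in\nc_{2n+1}$. Thus only the gap part of Property~\eqref{it:od} remains to be established for the forward inclusion.

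The key tool I would isolate is a description of the covers inside $[\id,c]_{\bth}$. If $y\leqa c$ and $x\lessdot_\bth y$, then $x^{-1}y$ is a single $3$-cycle and $\la(x)=\la(y)-1$; since $y\in\Alto_{2n+1}$ has no even cycles, the third case in the proof of Proposition~\ref{prop:alternating_length} shows that the only way to drop the length by one is to cut one increasing cycle $\zeta=(v_1<v_2<\cdots<v_p)$ of $y$ into three cyclically consecutive arcs, each becoming a cycle of $x$ and each necessarily of odd length. I would then prove a \emph{cutting lemma}: cutting an OD cycle into three odd-length cyclic arcs again yields OD cycles. Indeed, since $\zeta$ is OD we have $v_\beta-v_\alpha\equiv\beta-\alpha\pmod 2$, so the gaps inside any non-wrapping arc are inherited and remain odd; and for the at most one arc that wraps around, written increasingly as $(v_1\;\cdots\;v_a\;v_{c+1}\;\cdots\;v_p)$, the single new gap $v_{c+1}-v_a$ has the parity of $c+1-a$, which is odd precisely because that arc has odd length and $p$ is odd. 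Starting from $c$ (which is OD) and inducting downward along covers, this gives $x\in\enc_{2n+1}$ for every $x\leqa c$.

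For the reverse inclusion I would run the same analysis upward. Reading the cutting lemma backwards, merging three cyclically consecutive OD cycles into one OD cycle is exactly an upward cover $x\lessdot_\bth y$ with $y\in\enc_{2n+1}$. It therefore suffices to prove a \emph{merge lemma}: every $x\in\enc_{2n+1}$ with $x\neq c$ admits such a merge, namely three of its cycles that arise as the cyclic arcs of a single increasing OD cycle. Granting this, induction on the number of cycles (base case $x=c$, the unique one-cycle member) together with transitivity of $\leqa$ yields $x\leqa c$, completing the equality $\enc_{2n+1}=[\id,c]_{\bth}$.

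The main obstacle is the merge lemma. Since $2n+1$ is odd, a member $x\neq c$ has an odd number $\geq 3$ of (odd) blocks, and I would locate a mergeable triple using the nested structure of noncrossing partitions: either $x$ has three value-consecutive interval-blocks tiling an interval, giving a non-wrapping merge, or some block $P$ has a non-empty gap, inside which the parity constraints of Property~\eqref{it:od} force enough odd blocks to perform a wrapping merge of $P$ with two of them. Making this dichotomy exhaustive and parity-consistent is the delicate point. A cleaner alternative I would also pursue uses the Kreweras complement $x^{-1}c$ from Proposition~\ref{prop:generalized_kreweras}: for $x\in\nc_{2n+1}$, Equation~\eqref{eq:reflection_length} gives $\cyc(x)+\cyc(x^{-1}c)=2n+2$, so once one checks that Property~\eqref{it:od} forces every cycle of $x^{-1}c$ to be odd as well, Proposition~\ref{prop:alternating_length} yields $\la(x)+\la(x^{-1}c)=n=\la(c)$, which is exactly $x\leqa c$. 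Proving that $x\in\enc_{2n+1}$ makes $x^{-1}c$ have only odd cycles is then the crux, and I expect it to reduce to the same parity bookkeeping as the cutting lemma.
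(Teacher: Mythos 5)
Your forward direction is sound and is essentially the paper's own argument: reduce to odd cycles and noncrossingness via Proposition~\ref{prop:AAo_first_properties} and Proposition~\ref{prop:AAo_subposet}, then induct downward along covers using the fact (Corollary~\ref{cor:covers}, case~\eqref{it:oddo}) that a cover inside $\Alto_{2n+1}$ splits one odd cycle into three odd cyclic arcs; your parity check at the unique wrap-around junction ($v_{c+1}-v_a\equiv c+1-a$, odd because the arc length $a+p-c$ is odd and $p$ is odd) is exactly the computation the paper performs. No complaints there.

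The reverse inclusion, however, has a genuine gap: everything hinges on your ``merge lemma,'' and you do not prove it --- you explicitly flag the dichotomy (``three value-consecutive interval-blocks \dots or some block $P$ has a non-empty gap \dots'') as the delicate point and leave it open, and the Kreweras-complement alternative likewise defers its crux (``Property~\eqref{it:od} forces every cycle of $x^{-1}c$ to be odd'') to unperformed bookkeeping. That claim is precisely Proposition~\ref{prop:kreweras_invariance} specialized to a long cycle, whose proof in the paper needs the separate value-characterization Lemma~\ref{lem:value_characterization} and a case analysis; it is not a one-line consequence of the cutting lemma. The paper closes the gap with a concrete construction you should adopt: let $\zeta_1=(u_1<\cdots<u_{2q+1})$ be the cycle of $x$ containing $u_1=1$ and set $u_{2q+2}=2n+2$; since $x\neq c$ there is $t$ with $u_{t+1}-u_t>1$, and one takes $\zeta_2=(v_1<\cdots<v_{2r+1})$ to be the cycle containing $v_1=u_t+1$. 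Noncrossingness confines $\zeta_2$ to the gap, and \eqref{it:od} gives $v_{2r+1}\equiv v_1=u_t+1\equiv u_{t+1}\pmod 2$, so $v_{2r+1}+1$ still lies strictly inside the gap and determines a third cycle $\zeta_3=(w_1<\cdots<w_{2s+1})$ with $w_1=v_{2r+1}+1$. Multiplying by the $3$-cycle $(u_t\;v_{2r+1}\;w_{2s+1})$ merges $\zeta_1,\zeta_2,\zeta_3$ into a single increasing cycle whose two new internal gaps equal $1$ and whose third new gap $u_{t+1}-w_{2s+1}$ is odd by the same parity chain, so the result satisfies \eqref{it:od}; this produces $y$ with $x\lessdot_{\bth}y$ and $y\in\enc_{2n+1}$, and induction on $n-\la(x)$ finishes. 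In short: your ``locate a mergeable triple'' step is exactly where the noncrossing structure and the parity constraints of \eqref{it:od} must be used together, and as written your dichotomy is neither shown to be exhaustive nor shown to yield an \eqref{it:od} cycle after merging.
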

\begin{proof}
	Let $c=(1\;2\;\ldots\;2n\!+\!1)$.  Suppose $x\leqa c$. All cycles of $x$ are odd by Proposition~\ref{prop:AAo_first_properties}. From Proposition~\ref{prop:AAo_subposet}, we have $x\leqt c$ so that $x\in\nc_{2n+1}$, which implies that all cycles of $x$ are increasing and noncrossing; see~\cite{brady01partial}*{Theorem~2.14} or \cite{biane97some}.
	
	We prove that $x$ satisfies \eqref{it:od} by induction on $f(x)=n-\la(x)$. If $f(x)=0$, then $x=c$ which clearly satisfies \eqref{it:od}. If $f(x)>0$, then there exists $y\in \enc_{2n+1}$ with $x\lessdot_{\bth} y \leqa c$, so $f(y)=f(x)-1$ and we can assume by induction that the cycles of $y$ satisfy \eqref{it:od}. By Proposition~\ref{prop:AAo_first_properties}, $x$ is obtained from $y$ by splitting an odd cycle $(u_{1}<u_{2}<\cdots<u_{2q+1})$ into three odd cycles, and we need to verify that each such cycle satisfies \eqref{it:od}. This is clear if $u_1$ and $u_{2q+1}$ are in different cycles of $x$. If they are in the same cycle, say in
	\begin{displaymath}
		\zeta=(u_{1}<u_{2}<\cdots<u_{k}<u_{l}<u_{l+1}<\cdots<u_{2q+1})
	\end{displaymath}
	for some $l>k+1$, then we need to show that $u_l-u_k$ is odd, which is equivalent to $l-k$ being odd. This is indeed the case since the two other cycles are odd and thus the set $\{u_{k+1},u_{k+2},\ldots,u_{l-1}\}$ has even cardinality.
	\smallskip

	Conversely, assume that $x\in\nc_{2n+1}$ and its cycles satisfy \eqref{it:od}. If $x=c$ we are done. We now assume $x\neq c$. We will construct $y\in \Alt_{N}$ such that $x\lessdot_{\bth} y\leqt c$ and all cycles of $y$ satisfy \eqref{it:od}. Then $x\leqa c$ by induction on $f(x)$ as above.

	Consider the cycle $\zeta_1=(u_1<u_2<\cdots<u_{2q+1})$ of $x$ with $u_{1}=1$, and set $u_{2q+2}=2n+2$.  Since $x\neq c$, there exists $t\in[2q+1]$ such that $u_{t+1}-u_t>1$. Let $\zeta_2$ be the cycle containing $u_t+1$, say $(v_1<v_2<\cdots<v_{2r+1})$ with $v_{1}=u_{t}+1$, which because of the noncrossing condition satisfies $v_{2r+1}<u_{t+1}$. Now we obtain
	\begin{displaymath}
		u_{t+1}\equiv u_t+1=v_1\equiv v_{2r+1} \pmod{2},
	\end{displaymath}
	and we can thus consider a third cycle $\zeta_3=(w_1<w_2<\cdots<w_{2s+1})$ with $w_{1}=v_{2r+1}+1$, where $w_{2s+1}<u_t+1$ still by the noncrossing condition, and $u_t+1-w_{2s+1}$ is odd. Now set $i=u_t, j=v_{2r+1}, k=w_{2s+1}$. Then $\zeta_1\zeta_2\zeta_3\cdot (i\;j\;k)$ is equal to 
	\begin{displaymath}
	(u_1<\cdots<u_t<v_1<\cdots<v_{2r+1}<w_1<\cdots<w_{2s+1}<u_t+1<\cdots<u_{2q+1})
	\end{displaymath}
	which satisfies \eqref{it:od}. Let $y=x\cdot (i\;j\;k)$; by construction $y\in\nc_{2n+1}$, $x\lessdot_{\bth} y$ and all its cycles satisfy \eqref{it:od}.
\end{proof}

In Remark \ref{rem:altproof} we shall give an alternative proof of this result based on a bijection due to Goulden and Jackson~\cite{goulden00transitive}.

\begin{lemma}\label{lem:value_characterization}
	Let $x\in \nc_{N}$. Then  $x\in\enc_{N}$  if and only if the following property holds:
	\begin{description}
		\item[\namedlabel{it:p}{P}] for any $j\in [N]$, we have $j<x(j)$ if and only if $x(j)-j$ is odd.
	\end{description}
\end{lemma}
\begin{proof}
	Let $j\in[N]$.  If $x(j)=j$, then there is nothing to show.  If $x(j)\neq j$, then $x$ must have a cycle in which $j$ and $x(j)$ are consecutive.  Since $x\in\nc_{N}$, we can write each cycle such that its entries are strictly increasing.  Then, $x(j)<j$ implies that the cycle in question has $x(j)$ as its least element, and $j$ as its greatest.  
	
	If $x$ satisfies \eqref{it:od}, then neighboring entries in a cycle have odd difference, and the extremal entries have even difference, and \eqref{it:p} follows.  Conversely, if $x$ satisfies \eqref{it:p}, then it follows that neighboring entries in a cycle have odd difference, and this combined with the fact that extremal entries in a cycle have even difference implies that all cycles are odd.  Hence $x$ satisfies \eqref{it:od}.  
\end{proof}

Let $y\in\nc_{N}$, and recall from Proposition~\ref{prop:generalized_kreweras} that the Kreweras complement $K_y\colon[\id,y]_{\btw}\to [\id,y]_{\btw}$ is defined by $x\mapsto x^{-1}y$.  

\begin{proposition}\label{prop:kreweras_invariance}
	Suppose $x,y\in\enc_{N}$ and $x\leqt y$. Then $K_y(x)\in\enc_{N}$.
\end{proposition}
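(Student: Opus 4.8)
The plan is to establish membership in $\enc_N$ through Lemma~\ref{lem:value_characterization}: it suffices to show that $z:=K_y(x)=x^{-1}y$ lies in $\nc_N$ and satisfies Property~\eqref{it:p}. Membership in $\nc_N$ is immediate, since by Proposition~\ref{prop:generalized_kreweras} applied with $\Gen$ the set of transpositions, $K_y$ restricts to a bijection of $[\id,y]_{\btw}$; because $x\leqt y$ we get $z\in[\id,y]_{\btw}$, and as $y\in\enc_N\subseteq\nc_N$ we have $[\id,y]_{\btw}\subseteq[\id,c]_{\btw}=\nc_N$. In particular $z\leqt y$, so $z$ refines $y$ as a noncrossing partition; hence each cycle of $z$ is contained in a single cycle of $y$, and $z$ preserves every block of $y$ setwise. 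The hypothesis $x\leqt y$ gives the same refinement for $x$, so $x$ preserves the blocks of $y$ as well.

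Property~\eqref{it:p} is a condition on each $j\in[N]$ individually, so I would fix $j$ and let $B=(b_1<b_2<\cdots<b_{2m+1})$ be the cycle of $y$ containing it, which satisfies Property~\eqref{it:od}; thus all consecutive differences $b_{i+1}-b_i$ are odd. Writing $j=b_p$ and $z(j)=b_q$ (legitimate, as $z$ preserves $B$), the odd-gap property gives $b_q-b_p\equiv q-p\pmod 2$, while $z(j)>j$ is equivalent to $q>p$. Consequently Property~\eqref{it:p} for $z$ at $j$ is equivalent to the same property, now on the index set $[2m+1]$, for the reindexed permutation $\tilde z$ defined by $\tilde z(p)=q$.

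The reindexing $b_i\mapsto i$ is an order isomorphism carrying $y|_B$ to the long cycle $c'=(1\;2\;\cdots\;2m+1)$, carrying $x|_B$ to some permutation $\tilde x$, and carrying $z|_B=(x|_B)^{-1}(y|_B)$ to $\tilde z=\tilde x^{-1}c'$. The same odd-gap computation shows that each cycle of $\tilde x$ again satisfies Property~\eqref{it:od} (odd length and odd consecutive differences are preserved by the reindexing), and noncrossingness is preserved by any order isomorphism, so $\tilde x\in\enc_{2m+1}$. By Proposition~\ref{prop:onc_characterization} this means $\tilde x\leqa c'$, and now Proposition~\ref{prop:generalized_kreweras} applied with $\Gen=C_{3,2m+1}$ (closed under conjugation, with $c'\in\Alt_{2m+1}$) shows that $\tilde z=\tilde x^{-1}c'$ lies in $[\id,c']_{\bth}=\enc_{2m+1}$. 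By Lemma~\ref{lem:value_characterization} applied on $[2m+1]$, $\tilde z$ satisfies Property~\eqref{it:p}, which by the previous paragraph yields Property~\eqref{it:p} for $z$ at $j$. As $j$ was arbitrary, $z\in\enc_N$.

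The conceptual content lies entirely in the reduction to a single cycle: the delicate bookkeeping is checking that the reindexing preserves Property~\eqref{it:od} and, crucially, that parities of differences of values agree with parities of differences of positions. This is exactly where the odd-gap hypothesis is used, both to transport Property~\eqref{it:p} across the reindexing and to guarantee $\tilde x\in\enc_{2m+1}$. Once this is in place, no explicit computation of the Kreweras complement is required: the single-cycle case is dispatched abstractly by combining Proposition~\ref{prop:onc_characterization} with the antiautomorphism of $[\id,c']_{\bth}$ provided by Proposition~\ref{prop:generalized_kreweras}. I expect the main obstacle to be setting up the block/reindexing reduction cleanly and keeping all the parity translations consistent.
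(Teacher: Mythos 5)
Your proof is correct, and while it shares the paper's overall skeleton (reduce to a single cycle of $y$ via the block decomposition, transport parities through the order-preserving reindexing $b_i\mapsto i$, which is exactly the conjugation step the paper also performs), it handles the essential long-cycle case by a genuinely different route. The paper proves that case by brute force: it writes out $K_{c_k}(x)(i)=x^{-1}(i+1)$ and verifies Property~\eqref{it:p} value by value through a case analysis. You instead observe that once $\tilde x$ is known to lie in $\enc_{2m+1}$, Proposition~\ref{prop:onc_characterization} identifies $\enc_{2m+1}$ with the interval $[\id,c']_{\bth}$, and Proposition~\ref{prop:generalized_kreweras} (applied to $C_{3,2m+1}$) says this interval is stable under $\tilde x\mapsto\tilde x^{-1}c'$ — so membership of the Kreweras complement in $\enc_{2m+1}$ is immediate, with no computation. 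This is not circular: Proposition~\ref{prop:onc_characterization} is established earlier and independently of Proposition~\ref{prop:kreweras_invariance} and of Lemma~\ref{lem:value_characterization}. What the paper's computation buys is self-containedness — it does not lean on the (nontrivial) characterization of $[\id,c]_{\bth}$ — and it produces Lemma~\ref{lem:value_characterization} as a working tool; what your argument buys is brevity and a conceptual explanation of \emph{why} the statement holds (self-duality of $[\id,c']_{\bth}$). Indeed the authors note in Remark~\ref{rem:altproof} that a similar shortcut, via the Goulden--Jackson bijection, also bypasses the computational content of Lemma~\ref{lem:value_characterization}. Two minor points you gloss over, both standard: that $z\leqt y$ forces the orbit partition of $z$ to refine that of $y$ (so that restriction to a block of $y$ makes sense), and the trivial case where the block of $y$ containing $j$ is a fixed point; neither affects correctness.
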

\begin{proof}
	Assume first $y=c_k=(1\;2\;\ldots\;2k\!+\!1)$ for a certain $k$, and write $x'=K_{c_k}(x)$. Then $x'(i)=x^{-1}(i+1)$ for $i<2k+1$ and $x'(2k+1)=x^{-1}(1)$. By Lemma~\ref{lem:value_characterization}, we assume that $x$ satisfies \eqref{it:p} and we must show that so does $x'$.

	Suppose $i<x'(i)$. In particular we have $i\neq 2k+1$ so that we have $x'(i)=x^{-1}(i+1)=:j$.  If $i\leq j-2$, then we have $x(j)=i+1<j$, and \eqref{it:p} implies that $j-x(j)$ is even which is equivalent to $i-x'(i)$ being odd as desired.  If $i=j-1$, then $x'(i)=i+1$, and we get $x'(i)-i=1$, which is odd as desired.

	Suppose $i\geq x'(i)$. If $i<2k+1$, then we have $x'(i)=x^{-1}(i+1)=:j$.  We have $x(j)=i+1>j$, and \eqref{it:p} implies that $x(j)-j$ is odd which is equivalent to $i-x'(i)$ being even as desired.  If $i=2k+1$, then we have $x'(i)=x^{-1}(1)$, and $2k+1-x^{-1}(1)$ is even if and only if $x^{-1}(1)$ is odd.  This is clearly satisfied if $x^{-1}(1)=1$.  Otherwise, if $x^{-1}(1)>1$, then \eqref{it:p} applied to $x$ and $x^{-1}(1)$ yields that $x^{-1}(1)-1$ is even as desired.

	We have thus shown the claim for $y=c_k$. If $y$ is a cycle $\zeta=(a_1\;a_2\;\ldots\;a_{2k+1})$ satisfying \eqref{it:od}, then by Proposition~\ref{prop:invariant_conjugation} the  posets $[\id,y]_{\bth}$ and $(1\;2\;\ldots\;2k+1)$ are isomorphic via conjugation by $w\in\Sym_{2n+1}$ satisfying $w(i)=a_i$. Since $a_i-a_j$ has the same parity as $i-j$, the property \eqref{it:od} is preserved by this conjugation, and we can conclude the claim for $y=\zeta$ since $K_{c_k}=wK_\zeta w^{-1}$.
		
	In the general case, one has a poset isomorphism $[\id,y]_{\bth}\cong [\id,\zeta_1]_{\bth}\times[\id,\zeta_{2}]_{\bth}\cdots\times [\id,\zeta_k]_{\bth}$ where the $\zeta_i$ are the odd cycles of $y$, thanks to Proposition~\ref{prop:interval_decomposition}, and each $\zeta_i$ satisfies  \eqref{it:od} because $y$ does. This isomorphism sends $K_y$  to the product of the $K_{\zeta_i}$, and obviously $x\leqa y$ satisfies \eqref{it:od} if and only if each $x_i$ in its image $(x_1,x_2,\ldots,x_k)$ satisfies \eqref{it:od}, which concludes the proof.
\end{proof}

\begin{theorem}\label{thm:enc_subposet}
	For $N\geq 3$ the poset $\penc_{N}$ is an induced subposet of $\pnc_{N}$.
\end{theorem}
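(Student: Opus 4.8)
The plan is to identify $\penc_N$ with the subposet of $\pnc_N$ carried by the set $\enc_N$, so that the entire content reduces to checking that the two orders agree there. Since $\enc_N\subseteq\nc_N$ by Definition~\ref{def:even_nc}, I only need to prove that for all $x,y\in\enc_N$ one has $x\leqa y$ if and only if $x\leqt y$. One implication comes for free: every element of $\enc_N$ has only odd cycles by \eqref{it:od} and therefore lies in $\Alto_N$, so Proposition~\ref{prop:AAo_subposet} immediately gives $x\leqa y\Rightarrow x\leqt y$.

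The real work is the converse. Suppose $x,y\in\enc_N$ with $x\leqt y$. Unwinding the definition \eqref{eq:order} of the transposition order, this reads
\begin{displaymath}
	\lt(y)=\lt(x)+\lt(x^{-1}y).
\end{displaymath}
The key observation is that the Kreweras complement $K_y(x)=x^{-1}y$ again belongs to $\enc_N$, and hence to $\Alto_N$; this is precisely Proposition~\ref{prop:kreweras_invariance}. Now all three of $x$, $y$, and $x^{-1}y$ lie in $\Alto_N$, where $\la=\tfrac12\lt$ by Proposition~\ref{prop:AAo_first_properties}. Dividing the displayed identity by two then yields
\begin{displaymath}
	\la(y)=\la(x)+\la(x^{-1}y),
\end{displaymath}
which is exactly $x\leqa y$. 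Together with the first implication, this shows that $\leqa$ and $\leqt$ restrict to the same relation on $\enc_N$, which is the assertion of the theorem.

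The single nontrivial ingredient is the closure of $\enc_N$ under the Kreweras complement, namely Proposition~\ref{prop:kreweras_invariance}; everything else is bookkeeping built on Propositions~\ref{prop:AAo_subposet} and~\ref{prop:AAo_first_properties}. That this closure is genuinely needed, rather than a formality, is clear from the Remark after Proposition~\ref{prop:AAo_subposet}: one has $(2\;4\;5)\leqt(1\;2\;3\;4\;5)$ with both permutations in $\Alto_5$, yet $(2\;4\;5)\not\leqa(1\;2\;3\;4\;5)$; the obstruction is that here $x^{-1}y$ acquires even cycles and so fails $\la=\tfrac12\lt$ (in general only $\la\geq\tfrac12\lt$ holds on $\Alt_N$, since $\oc\leq\cyc$). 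Requiring $x$ and $y$ to satisfy \eqref{it:od} is exactly what forces $x^{-1}y$ back into $\Alto_N$, via Proposition~\ref{prop:kreweras_invariance}, and I expect the real difficulty of the section to be concentrated in establishing that closure.
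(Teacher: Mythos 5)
Your proposal is correct and follows essentially the same route as the paper: Proposition~\ref{prop:AAo_subposet} for the forward implication, and for the converse the closure of $\enc_{N}$ under the Kreweras complement (Proposition~\ref{prop:kreweras_invariance}) combined with the relation $\la=\tfrac{1}{2}\lt$ on $\Alto_{N}$ from Proposition~\ref{prop:AAo_first_properties}. Your concluding observations about where the difficulty is concentrated match the structure of the paper's Section~\ref{sec:alternating_noncrossing}.
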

\begin{proof}
	We know that $\penc_{N}$ is a subposet of $\pnc_{N}$ by Proposition~\ref{prop:AAo_subposet}. Now let $x,y\in\enc_{N}$ with $x\leqt y$. By Proposition~\ref{prop:kreweras_invariance}, $K_y(x)=x^{-1}y$ is in $\enc_{N}\subset \Alto_{N}$, so we can use the relation between $\lt$ and $\la$ in Proposition~\ref{prop:AAo_first_properties} to deduce that $x\leqa y$, which concludes the proof.
\end{proof}

Now we can conclude the proof of Theorem~\ref{thm:AAo_embedding}.

\begin{proof}[Proof of Theorem~\ref{thm:AAo_embedding}]
	Recall that any $y\in\Alto_{N}$ is $\Sym_{N}$-conjugate to some $x\in\enc_{N}$.  Therefore, we can find $w\in\Sym_{N}$ with $y=w^{-1}xw$.  Proposition~\ref{prop:invariant_conjugation} states that the posets $[\id,x]_{\bth}$ and $[\id,y]_{\bth}$ are isomorphic, and so are $[\id,x]_{\btw}$ and $[\id,y]_{\btw}$.  Now Theorem~\ref{thm:enc_subposet} states that $[\id,x]_{\bth}$ is an induced subposet of $[\id,x]_{\btw}$, and this property is certainly preserved under the above isomorphism. 
\end{proof}

\section{Enumerative Results }
	\label{sec:alternating_combinatorics}
In this section we collect a few enumerative properties of the poset $(\Alt_{N},\leqa)$.
	
\subsection{The Rank Generating Function of $(\Alt_N,\la)$}

Let $F_N(q)=\sum_{x\in \Alt_N}q^{\oc(x)}$ be the polynomial enumerating $\Alt_{N}$ with respect to $\oc$. Then Proposition~\ref{prop:alternating_length} states that $q^{N/2}F_N(q^{-1/2})$ is the polynomial enumerating $\Alt_{N}$ with respect to $\la$.

\begin{proposition}\label{prop:rankenumeration}
	The generating function $F(t,q)=\displaystyle\sum_{N\geq 0}F_N(q)\frac{t^N}{N!}$ is given by
	\begin{align}
	F(t,q) & = \frac{1}{2}\left(\frac{1+t}{1-t}\right)^{q/2}\left(\vphantom{\frac{t+1}{t-1}}(1-t^2)^{-1/2}+(1-t^2)^{1/2}\right) \label{eq11}\\
	& = \frac{1}{2}\left(\vphantom{\frac{t+1}{t-1}}(1+t)^{\frac{q-1}{2}}(1-t)^{-\frac{q+1}{2}}+ (1+t)^{\frac{q+1}{2}}(1-t)^{-\frac{q-1}{2}}\right).\label{eq12}
	\end{align}
\end{proposition}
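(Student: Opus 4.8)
The plan is to combine the exponential formula for exponential generating functions with the standard sign trick that detects membership in the alternating group. Since a $k$-cycle has sign $(-1)^{k-1}$, a permutation $x$ is even exactly when it has an even number of even-length cycles, so \(\tfrac{1+\operatorname{sgn}(x)}{2}\) is the indicator of the event \(x\in\Alt_N\). This lets me rewrite the sum over \(\Alt_N\) as a sum over the whole symmetric group:
\[
	F_N(q)=\sum_{x\in\Alt_N}q^{\oc(x)}=\frac12\Biggl(\sum_{x\in\Sym_N}q^{\oc(x)}+\sum_{x\in\Sym_N}\operatorname{sgn}(x)\,q^{\oc(x)}\Biggr).
\]
The problem thus reduces to computing the exponential generating functions of the two sums on the right, each taken over all of \(\Sym_N\).

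The second step is to record the EGF of a single cycle, since a permutation is a set of disjoint cycles. From \(\sum_{k\geq1}\frac{t^k}{k}=-\log(1-t)\) one extracts the odd-length part \(\frac12\log\frac{1+t}{1-t}\) and the even-length part \(-\frac12\log(1-t^2)\). Weighting each odd cycle by \(q\) and each even cycle by \(1\) yields the connected EGF \(\frac{q}{2}\log\frac{1+t}{1-t}-\frac12\log(1-t^2)\). For the signed sum the only change is that an even cycle now carries the factor \(\operatorname{sgn}=-1\) (an odd cycle has sign \(+1\)), so its contribution flips sign, giving \(\frac{q}{2}\log\frac{1+t}{1-t}+\frac12\log(1-t^2)\).

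The third step applies the exponential formula: exponentiating the two connected EGFs produces the EGFs of the two sums over \(\Sym_N\), namely \(\left(\frac{1+t}{1-t}\right)^{q/2}(1-t^2)^{-1/2}\) and \(\left(\frac{1+t}{1-t}\right)^{q/2}(1-t^2)^{1/2}\). Taking their average, as dictated by the displayed identity above, gives precisely \eqref{eq11}. Finally, factoring \(1-t^2=(1-t)(1+t)\) and collecting the exponents of \(1+t\) and \(1-t\) in each summand turns the two terms into the products appearing in \eqref{eq12}.

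I expect no serious obstacle here: the substantive idea is the sign trick reducing the alternating-group count to two symmetric-group sums, and the one point requiring care is the parity bookkeeping in the single-cycle EGF together with the observation that the sign weight negates exactly the even-cycle term. The invocation of the exponential formula and the closing algebraic simplification from \eqref{eq11} to \eqref{eq12} are routine.
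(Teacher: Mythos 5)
Your proof is correct and is essentially the paper's argument in a different guise: averaging $\tfrac{1+\operatorname{sgn}}{2}$ over $\Sym_N$ produces exactly the two exponentials $\exp\bigl(\tfrac{q}{2}\log\tfrac{1+t}{1-t}\mp\tfrac12\log(1-t^2)\bigr)$ that the paper obtains by using $\bigl(\exp(x)+\exp(-x)\bigr)/2$ to force an even number of even cycles. The cycle-EGF bookkeeping and the passage to \eqref{eq12} match the paper's as well.
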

\begin{proof}
	We need to count collections of cycles with an even number of even cycles, with $q$ marking the number of odd cycles. Recall that the series enumerating cycles is $-\log(1-t)$, so that, by taking its odd and even part, the series for odd cycles is $1/2\log\bigl((1+t)/(1-t)\bigr)$ while the series for even cycles is $-1/2\log(1-t^2)$. By the exponential formula~\cite{stanley01enumerative_vol2}*{Chapter~5}, using $q$ to mark odd cycles and $\bigl(\exp(x)+\exp(-x)\bigr)/2$ to ensure an even number of even cycles, we get
	\begin{displaymath}
		F(t,q) = \exp\left(\frac{q}{2}\log\left(\frac{1+t}{1-t}\right)\right)\cdot\frac{1}{2}\left(\exp\left(\frac{-\log(1-t^2)}{2}\right) + \exp\left(\frac{-\log(1-t^2)}{2}\right)\right)
	\end{displaymath}
	which gives the desired expression~\eqref{eq11}, and ~\eqref{eq12} follows immediately.
\end{proof}

Notice that the case $q=1$ gives $F(t,1)=\bigl(1+t+1/(1-t)\bigr)/2$ which corresponds to the fact that $\lvert\Alt_{0}\rvert=\lvert\Alt_{1}\rvert=1$ and $\lvert\Alt_{N}\rvert=N!/2$ for $N\geq 2$. We obtain $F_N(q)$ by expanding \eqref{eq12} in powers of $t$ and picking the coefficient of $t^N/N!$. There does not seem to be a nice formula for $F_N(q)$ in general, however it is easy to use any mathematics software to obtain the polynomials for the first few values of $n$, see Table~\ref{tab:rank_generating_function}.

\begin{table}
	\centering
	\begin{tabular}{c|c|c}
		$n$ & $F_{n}(q)$ & $\displaystyle \sum_{x\in \Alt_{N}}q^{\la(x)}$\\
		\hline
		$0$ & $1$  & $1$ \\
		$1$ & $q$  & $1$ \\
		$2$ & $q^{2}$ & $q$ \\ 
		$3$ & $q^{3}+2q$  & $1+2q$ \\
		$4$ & $q^{4}+8q^{2}+3$  & $1+8q+3q^2$ \\
		$5$ & $q^{5}+20q^{3}+39q$ & $1+20q+39q^2$ \\
		$6$ & $q^{6}+40q^{4}+229q^{2}+90$  &  $1+40q+229q^2+90q^3$ \\
		$7$ & $q^{7}+70q^{5}+889q^{3}+1560q$  & $1+70q+889q^2+ 1560 q^3$ 
            \end{tabular}
	\caption{The rank generating function of $(\Alt_{N},\leqa)$ for $N\leq 7$.
	\label{tab:rank_generating_function}}
\end{table}

\subsection{The Poset $\penc_{N}$}
	\label{sec:enumerative}
Recall from Proposition~\ref{prop:interval_decomposition} that any interval in $(\Alt_{N},\leqa)$ can be decomposed as a direct product of intervals induced by odd cycles times one interval induced by an element consisting of an even number of even cycles.  In this section we study the intervals $\penc_{N}$ defined in Definition~\ref{def:even_nc}.  In view of Propositions~\ref{prop:interval_decomposition} and~\ref{prop:AAo_first_properties} this knowledge is enough to understand all intervals in the poset $(\Alto_{N},\leqa)$.  

Given a generated group $(G,\Gen)$, let $y\in G$ and $m\geq 1$. Then the multichains  $y_{1}\leq_\Gen y_{2}\leq_\Gen\cdots \leq_\Gen y_{m}\leq_\Gen y$ are in bijection with the factorizations $x_1x_{2}\cdots x_{m+1}=y$ satisfying  $\ell_{\Gen}(x_1)+\ell_{\Gen}(x_2)+\cdots +\ell_{\Gen}(x_{m+1})=\ell_{\Gen}(y)$ with $x_i\in G$. One simply defines $x_i=y^{-1}_{i-1}y_i$  for $i\in[m+1]$ with $y_0=e$ and $y_{m+1}=y$.  The inverse bijection is given by setting $y_i=x_1x_2\cdots x_i$.

In particular, $m$-multichains of $\penc_{2n+1}$ correspond bijectively to factorizations $x_{1}x_{2}\cdots x_{m+1}=(1\;2\;\ldots\;2n+1)$ in $\Alt_{N}$ such that $\la(x_{1})+\la(x_{2})+\cdots+\la(x_{m+1})=n$.  By Proposition~\ref{prop:kreweras_invariance}, all $x_i$ in this factorization belong to $\penc_{2n+1}$. Thus the $x_{i}$ have only odd cycles and therefore $\lt(x_i)=2\la(x_i)$ by Proposition~\ref{prop:AAo_first_properties}. We have the following lemma.

\begin{lemma}\label{lem:multichains_onc}
	The set of $m$-multichains of $\penc_{2n+1}$ is in bijection with the set of factorizations $x_{1}x_{2}\cdots x_{m+1}=(1\;2\;\ldots\;2n+1)$ such that $\lt(x_{1})+\lt(x_{2})+\cdots+\lt(x_{m+1})=2n$, and all factors $x_{i}$ belong to $\Alto_{2n+1}$.
\end{lemma}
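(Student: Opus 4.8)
The plan is to specialize the general multichain–factorization bijection recalled immediately before the statement to the generated group $(\Alt_{2n+1},C_{3,2n+1})$ with $y=c=(1\;2\;\ldots\;2n+1)$. Since $\oc(c)=1$, Proposition~\ref{prop:alternating_length} gives $\la(c)=n$, so that bijection already identifies the $m$-multichains of $\penc_{2n+1}=[\id,c]_\bth$ with the factorizations $x_1x_2\cdots x_{m+1}=c$ satisfying $\la(x_1)+\cdots+\la(x_{m+1})=n$, through $x_i=y_{i-1}^{-1}y_i$. Two things then remain: (a) to check that every factor $x_i$ lies in $\Alto_{2n+1}$, and (b) to rewrite the length condition in terms of $\lt$.

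Step (b) is immediate once (a) is known: for $x_i\in\Alto_{2n+1}$ Proposition~\ref{prop:AAo_first_properties} gives $\lt(x_i)=2\la(x_i)$, so multiplying $\sum_i\la(x_i)=n$ by $2$ yields $\sum_i\lt(x_i)=2n$, and dividing by $2$ reverses the implication. The real content is therefore step (a).

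For (a) I would first show that each factor lies below $c$. From $\sum_i\la(x_i)=n=\la(c)$ and subadditivity of $\la$, the prefix $y_i=x_1\cdots x_i$ and the suffix $y_i^{-1}c=x_{i+1}\cdots x_{m+1}$ are forced to satisfy $\la(y_i)+\la(y_i^{-1}c)=\la(c)$, i.e.\ $y_i\leqa c$, and likewise $y_{i-1}\leqa y_i$. Because $C_{3,N}$ is closed under $\Alt_N$-conjugation, the Kreweras complement $K_{y_i}\colon[\id,y_i]_\bth\to[\id,y_i]_\bth$ of Proposition~\ref{prop:generalized_kreweras} is an antiautomorphism of $[\id,y_i]_\bth$; since $y_{i-1}\leqa y_i$ it sends $y_{i-1}\mapsto y_{i-1}^{-1}y_i=x_i$, whence $x_i\leqa y_i\leqa c$. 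Thus $x_i\in[\id,c]_\bth=\enc_{2n+1}$ by Proposition~\ref{prop:onc_characterization}, and as $c$ has only odd cycles, Remark~\ref{rem:odd_below} forces $x_i\in\Alto_{2n+1}$, completing (a).

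I expect (a) to be the only genuine obstacle; everything else is bookkeeping around the correspondence and the length formula already in place. If one prefers to avoid the prefix computation, an alternative for (a) is to concatenate reduced $3$-cycle words for the $x_i$ into a single $C_{3,N}$-reduced word for $c$ and use the Hurwitz action—again available since the generators are conjugation-closed—to move the block for $x_i$ to the front; its product is then a $\Sym_N$-conjugate of $x_i$ lying below $c$, and Proposition~\ref{prop:invariant_conjugation} transfers the conclusion (same cycle type, hence only odd cycles) back to $x_i$.
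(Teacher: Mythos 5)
Your proposal is correct and follows essentially the same route as the paper: specialize the general multichain--factorization correspondence to $(\Alt_{2n+1},C_{3,2n+1})$ with $y=c$, use conjugation-closedness of $C_{3,N}$ (via the Kreweras complement) to see that every factor $x_i=y_{i-1}^{-1}y_i$ lies in $[\id,c]_{\bth}\subseteq\Alto_{2n+1}$, and then convert between the $\la$- and $\lt$-length conditions using $\lt(x_i)=2\la(x_i)$ from Proposition~\ref{prop:AAo_first_properties}. Your step (a) merely spells out in more detail what the paper compresses into ``all $x_i$ belong to $\penc_{2n+1}$ because $C_{3,N}$ is closed under conjugation.''
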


 These multichains can thus be found inside $\pnc_{2n+1}$, which enables us to draw from results on the noncrossing partition lattice.	

We start with a formula for the number of multichains of $\penc_{2n+1}$ with fixed rank jump vector, and then derive the zeta polynomial of $\penc_{2n+1}$, as well as some other enumerative properties, from this.

\begin{theorem}\label{thm:enc_rs_chain_enumeration}
	For $n,q\geq 1$, the number of $(q-1)$-multichains $C=(x_{1},x_{2},\ldots,x_{q-1})$ of $\penc_{2n+1}$ with rank jump vector $r(C)=(r_{1},r_{2},\ldots,r_{q})$ is
	\begin{displaymath}
		(2n+1)^{q-1}\prod_{i=1}^{q}{\frac{1}{2n+1-r_{i}}\binom{2n+1-r_{i}}{r_{i}}}.
	\end{displaymath}
\end{theorem}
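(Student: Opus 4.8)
The plan is to start from Lemma~\ref{lem:multichains_onc}, which identifies the $(q-1)$-multichains of $\penc_{2n+1}$ having rank jump vector $(r_1,\dots,r_q)$ with the factorizations $c=x_1x_2\cdots x_q$ of the long cycle $c=(1\;2\;\ldots\;2n+1)$ in which each $x_i\in\Alto_{2n+1}$ and $\la(x_i)=r_i$, so that $\lt(x_i)=2r_i$ and $\sum_i 2r_i=2n$. Passing to partial products $y_i=x_1\cdots x_i$, these become multichains $\id=y_0\leqt y_1\leqt\cdots\leqt y_q=c$ in $\pnc_{2n+1}$ whose $\lt$-rank jumps are the even numbers $2r_i$, subject to the extra constraint that each factor $y_{i-1}^{-1}y_i$ has only odd cycles. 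It is worth rewriting the target value in the suggestive form $\frac{1}{2n+1}\prod_{i=1}^q c_{r_i}$, where $c_r=\frac{2n+1}{2n+1-r}\binom{2n+1-r}{r}$ is exactly the number of $r$-subsets of $\mathbb{Z}/(2n+1)$ containing no two cyclically adjacent elements. This is precisely the parity-analogue of Kreweras' count $\frac{1}{N}\prod_i\binom{N}{s_i}$ of multichains of $\pnc_N$ with rank jump vector $(s_1,\dots,s_q)$ from~\cite{kreweras72sur}, in which the unrestricted subsets enumerated by $\binom{N}{s_i}$ are replaced by the non-adjacent ones; realizing this replacement combinatorially is the goal.

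The primary approach I would take is to refine the noncrossing factorization count by the cycle types of the factors. For a minimal factorization $c=x_1\cdots x_q$ of the long cycle, the number of such factorizations in which each $x_i$ has a prescribed cycle type has a product form: a single global factor $(2n+1)^{q-1}$ arising from the cyclic symmetry of $c$, together with one factor per $x_i$ equal to the number of noncrossing partitions of the corresponding type as counted in~\cite{kreweras72sur}; this is also the content of the cactus/tree bijection of Goulden--Jackson~\cite{goulden00transitive} already invoked in Remark~\ref{rem:altproof}. I would then sum this refined count over all admissible cycle types for each factor. By Corollary~\ref{cor:covers} and Lemma~\ref{lem:value_characterization}, the constraint $x_i\in\Alto_{2n+1}$ is exactly the requirement that all parts of the $i$-th cycle type be odd, so for the $i$-th factor the summation ranges over partitions into odd parts contributing $\la(x_i)=r_i$.

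The crux is to evaluate, for a single factor, the sum of the per-type contributions over all odd cycle types with $\la=r$, and to show that it collapses to $c_r=\frac{2n+1}{2n+1-r}\binom{2n+1-r}{r}$ independently of the remaining factors. I would carry this out using the generating function for odd cycles already isolated in the proof of Proposition~\ref{prop:rankenumeration}, namely $\tfrac12\log\frac{1+t}{1-t}$, via the exponential formula and Lagrange inversion; the appearance of the non-adjacency count $c_r$ is most transparent through the bijection with non-consecutive $r$-subsets of $\mathbb{Z}/(2n+1)$ suggested above. I expect this parity summation, together with the correct bookkeeping of the single shared factor $(2n+1)^{q-1}$ (which reflects the cyclic symmetry and must not be duplicated across factors), to be the main obstacle; by contrast, verifying that the OD condition matches the odd-cycle-type condition and that the partial products stay in $\enc_{2n+1}$, via Proposition~\ref{prop:kreweras_invariance}, should be routine. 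An appealing alternative would be to bypass the cycle-type refinement altogether by adapting the bijection underlying Kreweras' formula so that the OD/parity condition on each factor corresponds directly to the non-consecutivity of the associated subset $S_i\subseteq\mathbb{Z}/(2n+1)$, which would produce the form $\frac{1}{2n+1}\prod_i c_{r_i}$ in one stroke.
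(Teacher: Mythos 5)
Your proposal follows essentially the same route as the paper: reduce via Lemma~\ref{lem:multichains_onc} to minimal factorizations of the long cycle with all factors in $\Alto_{2n+1}$ and $\la(x_i)=r_i$, invoke the Goulden--Jackson product formula for such factorizations refined by the cycle types of the factors, and then sum each factor's contribution over all cycle types with only odd parts; the paper performs this last summation by citing a multinomial identity of Krattenthaler--M{\"u}ller rather than via Lagrange inversion or the odd-cycle EGF. One detail to correct: the per-factor term in the refined formula is $\frac{1}{\ell_i}\binom{\ell_i}{q_i^{(1)},q_i^{(2)},\ldots}=(\ell_i-1)!/\prod_j q_i^{(j)}!(\ell_i-\sum_j q_i^{(j)})!$, where $\ell_i=2n+1-2r_i$ is the number of cycles of $x_i$, and \emph{not} the Kreweras count of noncrossing partitions of that type --- already for $q=2$ the product of two Kreweras counts times $2n+1$ overcounts. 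With the correct term the summation over odd types is the elementary identity $\sum\binom{m}{q^{(1)},q^{(2)},\ldots}=\binom{m+r-1}{r}$ (the coefficient of $x^{r}$ in $(1-x)^{-m}$, summing over $\sum_j jq^{(j)}=r$), which gives $\frac{1}{2n+1-2r_i}\binom{2n-r_i}{r_i}=\frac{1}{2n+1-r_i}\binom{2n+1-r_i}{r_i}$ per factor and hence the stated formula; no generating function for odd cycles or Lagrange inversion is needed.
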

\begin{proof}
	By Lemma \ref{lem:multichains_onc} and the discussion preceding it, such a multichain is equivalent to a minimal factorization $y_{1}y_{2}\cdots y_{q}=(1\;2\;\ldots\;2n\!+\!1)$ where $y_i\in\Alto_{2n+1}$ for $i\in[q]$. Now ~\cite{goulden92combinatorial}*{Theorem 3.2} (see also~\cite{krattenthaler10decomposition}*{Theorem 5}) gives a formula if the cycle type of each $y_i$ is fixed: in our case, if $y_i$ has $p_j^{(i)}$ cycles of length $2j+1$ for $j\geq 1$, then the number of these factorizations is given by
	\begin{displaymath}
		(2n+1)^{q-1}\prod_{i=1}^{q}{\frac{1}{2n-2r_{i}+1}\binom{2n-2r_{i}+1}{p_1^{(i)},p_2^{(i)},\ldots}},
	\end{displaymath}
	in which $r_i=\sum_{j}{jp_j^{(i)}}$. To obtain all factorizations, we sum over all sequences $(p_{1}^{(i)},p_{2}^{(i)},\ldots)$ by using~\cite{krattenthaler10decomposition}*{Lemma 4}, and we obtain
	\begin{displaymath}
		(2n+1)^{q-1}\prod_{i=1}^{q}{\frac{1}{2n-2r_{i}+1}\binom{2n-r_{i}}{r_{i}}}
	\end{displaymath}
	for the number of such factorizations.  This formula is equivalent to the formula in the statement.
\end{proof}

The cases $q=2,r(C)=(k,n-k)$ on the one hand, and $q=n+2,r(C)=(0,{1},{1},\ldots,{1},0)$ on the other give the following enumerations.

\begin{corollary}\label{cor:enc_rk_and_chain_enumeration}
	For $n\geq 1$ and $k\in\{0,1,\ldots,n\}$, we have the following formulas:
	\begin{align*}
		\Bigl\lvert\Rank_{\penc_{2n+1}}(k)\Bigr\rvert & = \frac{2n+1}{(2n+1-k)(n+1+k)}\binom{2n+1-k}{k}\binom{n+1+k}{n-k};\\
		\Bigl\lvert\MaxChains\bigl(\penc_{2n+1}\bigr)\Bigr\rvert & = (2n+1)^{n-1}.
	\end{align*}
\end{corollary}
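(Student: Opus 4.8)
The plan is to derive both formulas in Corollary~\ref{cor:enc_rk_and_chain_enumeration} as specializations of the rank-jump-vector count in Theorem~\ref{thm:enc_rs_chain_enumeration}, exactly as the sentence preceding the corollary advertises. The underlying observation is that the rank of an element and the number of maximal chains are themselves instances of weighted multichain counts with a prescribed rank jump vector, so no new combinatorics is required beyond careful substitution.

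For the first formula, I would take a single-element multichain, i.e.\ $q-1=1$ so $q=2$, and impose the rank jump vector $r(C)=(k,n-k)$. A $1$-multichain $(x_1)$ with this rank jump vector is precisely an element $x_1$ of $\penc_{2n+1}$ of rank $k$ (the two jumps record $\rk(x_1)-\rk(\hat 0)=k$ and $\rk(\hat 1)-\rk(x_1)=n-k$, consistent with the total rank $n$). Hence the left-hand side equals $\bigl\lvert\Rank(\penc_{2n+1},k)\bigr\rvert$. Substituting $q=2$, $r_1=k$, $r_2=n-k$ into the product formula of Theorem~\ref{thm:enc_rs_chain_enumeration} gives
\begin{displaymath}
	(2n+1)^{1}\cdot\frac{1}{2n+1-k}\binom{2n+1-k}{k}\cdot\frac{1}{2n+1-(n-k)}\binom{2n+1-(n-k)}{n-k},
\end{displaymath}
and after writing $2n+1-(n-k)=n+1+k$ and rewriting $\binom{n+1+k}{n-k}$ this is exactly the claimed expression. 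The only routine check is that $\binom{n+1+k}{n-k}$ matches $\tbinom{2n+1-(n-k)}{n-k}$, which is immediate.

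For the second formula I would instead take the longest possible chain. A maximal chain of $\penc_{2n+1}$ corresponds to a multichain whose every rank jump equals $1$: since the total rank is $n$, a maximal chain passes through elements of ranks $1,2,\ldots,n-1$, giving a $(n-1)$-multichain, i.e.\ $q-1=n-1$ and $q=n+1$, with rank jump vector $(1,1,\ldots,1)$ of length $n$. (Here I must be careful with the indexing: the rank jump vector has $q$ entries $r_1,\ldots,r_q$ summing to $n$, so all-ones forces $q=n$ jumps, hence $q=n+1$; I would double-check against the stated $q=n+2,\,r(C)=(0,1,\ldots,1,0)$ in the corollary's lead-in, padding with two zero jumps at the ends if the paper's convention counts the boundary steps separately.) Substituting $r_i=1$ into the product of Theorem~\ref{thm:enc_rs_chain_enumeration} and using $\tfrac{1}{2n+1-1}\binom{2n+1-1}{1}=\tfrac{1}{2n}\cdot 2n=1$ for each factor collapses the entire product to $1$, leaving the prefactor $(2n+1)^{q-1}=(2n+1)^{n-1}$, which is the asserted count of maximal chains.

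The main obstacle is purely bookkeeping rather than conceptual: reconciling the indexing convention for the rank jump vector (whether the zero jumps at the boundary are recorded, which changes $q$ by $2$) so that the specialization yields the correct exponent on $(2n+1)$ and the correct number of nontrivial factors. Once the convention from the proof of Theorem~\ref{thm:enc_rs_chain_enumeration} is pinned down, both computations are one-line substitutions followed by elementary binomial simplifications, and the factors with $r_i=0$ contribute $\tfrac{1}{2n+1}\binom{2n+1}{0}=\tfrac{1}{2n+1}$ each, which must be accounted for if the boundary jumps are included.
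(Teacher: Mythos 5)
Your proposal is correct and matches the paper's proof exactly: the paper obtains both formulas by specializing Theorem~\ref{thm:enc_rs_chain_enumeration} to $q=2$, $r(C)=(k,n-k)$ and to $q=n+2$, $r(C)=(0,1,\ldots,1,0)$, just as you do. The only blemish is the off-by-one in your second computation (an all-ones rank jump vector summing to $n$ forces $q=n$, not $q=n+1$), but since each zero jump contributes a factor $(2n+1)\cdot\tfrac{1}{2n+1}\binom{2n+1}{0}=1$, either consistent convention collapses to $(2n+1)^{n-1}$ as you conclude.
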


The second result is in fact a special case of~\cite{biane96minimal}*{Theorem~1} and corresponds to sequence \cite{sloane}*{A052750}.  Table~\ref{tab:enc_rank_numbers} lists the rank numbers of $\penc_{2n+1}$ for $n\leq 5$.  

\begin{table}
	\centering
	\begin{tabular}{c|c}
		$n$ & Rank numbers of $\penc_{2n+1}$ \\
		\hline
		$1$ & $(1,1)$ \\
		$2$ & $(1,5,1)$ \\
		$3$ & $(1,14,14,1)$ \\
		$4$ & $(1,30,81,30,1)$ \\
		$5$ & $(1,55,308,308,55,1)$ \\
	\end{tabular}
	\caption{The sequences of rank numbers of $\penc_{2n+1}$ for $n\leq 5$.}
	\label{tab:enc_rank_numbers}
\end{table}

Before we continue to prove Theorem~\ref{thm:main_zeta_polynomial}, we record the following auxiliary result, which is a multi-parameter version of the Rothe-Hagen identity.

\begin{lemma}\label{lem:multi_rothe_hagen}
	Let $r$ be a positive integer, and fix integers $a_{1},a_{2},\ldots,a_{r},b,n$.  Let $a=a_{1}+a_{2}+\cdots+a_{r}$.  We have
	\begin{displaymath}
		\sum_{n_{1}+n_{2}+\cdots+n_{r}=n}{\prod_{i=1}^{r}{\frac{a_{i}}{a_{i}+bn_{i}}\binom{a_{i}+bn_{i}}{n_{i}}}} = \frac{a}{a+bn}\binom{a+bn}{n}.
	\end{displaymath}
\end{lemma}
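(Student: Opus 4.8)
The plan is to recognize each factor on the left-hand side as a coefficient of a power of one fixed generating function, so that the identity reduces to the multiplicativity of powers of formal power series. The relevant object is the \emph{generalized binomial series} $\mathcal{B}_{b}(z)$, the unique formal power series solution of the functional equation $\mathcal{B}_{b}(z) = 1 + z\,\mathcal{B}_{b}(z)^{b}$; since it has constant term $1$, its power $\mathcal{B}_{b}(z)^{x}$ is a well-defined formal power series for any scalar $x$. Its essential property (a standard consequence of Lagrange inversion applied to the functional equation; this is also formula~(5.60) in \emph{Concrete Mathematics} by Graham, Knuth, and Patashnik) is the power rule
\[
	\mathcal{B}_{b}(z)^{x} = \sum_{k\geq 0}\frac{x}{x+bk}\binom{x+bk}{k}z^{k},
\]
where for $k\geq 1$ the summand equals the polynomial $\tfrac{x}{k!}(x+bk-1)(x+bk-2)\cdots(x+bk-k+1)$ in $x$ and $b$, and equals $1$ for $k=0$, so that the apparent division by $x+bk$ causes no problem.

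First I would apply the power rule with $x=a_{i}$ for each $i\in[r]$, which gives $\mathcal{B}_{b}(z)^{a_{i}} = \sum_{n_{i}\geq 0}\frac{a_{i}}{a_{i}+bn_{i}}\binom{a_{i}+bn_{i}}{n_{i}}z^{n_{i}}$. Multiplying these $r$ series and using $\mathcal{B}_{b}(z)^{a_{1}}\cdots\mathcal{B}_{b}(z)^{a_{r}} = \mathcal{B}_{b}(z)^{a_{1}+\cdots+a_{r}} = \mathcal{B}_{b}(z)^{a}$, the coefficient of $z^{n}$ on the left is precisely the convolution $\sum_{n_{1}+\cdots+n_{r}=n}\prod_{i=1}^{r}\frac{a_{i}}{a_{i}+bn_{i}}\binom{a_{i}+bn_{i}}{n_{i}}$, while the power rule applied once more with $x=a$ identifies the coefficient of $z^{n}$ on the right as $\frac{a}{a+bn}\binom{a+bn}{n}$. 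Equating these two coefficients yields the claimed identity.

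This argument proves the identity whenever $b$ is a nonnegative integer. To reach an arbitrary integer $b$ as stated, I would note that, for fixed $n$, both sides are polynomials in the indeterminates $a_{1},\ldots,a_{r},b$: the outer sum is finite, and each summand is polynomial after the cancellation described above. Since the two polynomials agree at all nonnegative integer values of $b$, they agree identically, which extends the identity to all integers $b$. The main obstacle is the power rule itself; everything else is formal bookkeeping, so the proof rests entirely on the Lagrange-inversion computation establishing $[z^{k}]\mathcal{B}_{b}(z)^{x} = \frac{x}{x+bk}\binom{x+bk}{k}$, which I would either cite as above or derive directly from the functional equation for $\mathcal{B}_{b}$. (Alternatively, one could avoid generating functions altogether and induct on $r$, the base case $r=2$ being the classical Hagen--Rothe identity and the inductive step merging $a_{r-1},a_{r}$ into a single parameter via that same case.)
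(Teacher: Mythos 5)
Your proof is correct and follows essentially the same route as the paper: the paper likewise writes each factor as a coefficient of $x^{a_i}$ where $x=1+zx^b$ (citing Gould's Equation~(7), which is the same power rule you quote from \emph{Concrete Mathematics} and attribute to Lagrange inversion), multiplies the series, and compares coefficients of $z^n$. Your added polynomial-identity step to cover arbitrary integer $b$ is a harmless refinement of a point the paper leaves implicit.
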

\begin{proof}
	The key observation to this proof was already made in Equation~(7) of \cite{gould56some}.  It was noted there that for integers $s,t$ we have the identity of power series in $z$:
	\begin{equation}\label{eq:gould_equation}
		 x^{s}=\sum_{j=0}^{\infty}{\frac{s}{s+tj}\binom{s+tj}{j}z^{j}},
	\end{equation}
	where $x$ is defined as the power series solution of $x=1+zx^t$. Note that $x$ counts plane $t$-ary trees with respect to their number of internal vertices; also,  \eqref{eq:gould_equation} is actually a direct application of the  Lagrange Inversion Theorem. In the present setting we have $x^{a_{1}}x^{a_{2}}\cdots x^{a_{r}}=x^{a}$.  If we apply \eqref{eq:gould_equation} on both sides, we obtain
	\begin{align*}
		\sum_{n=0}^{\infty}{\frac{a}{a+bn}\binom{a+bn}{n}z^{n}} & = x^{a} = x^{a_{1}}x^{a_{2}}\cdots x^{a_{r}} \\
		& = \prod_{i=1}^{r}{\left(\sum_{n=0}^{\infty}{\frac{a_{i}}{a_{i}+bn}\binom{a_{i}+bn}{n}z^{n}}\right)}\\
		& = \sum_{n=0}^{\infty}{\left(\sum_{n_{1}+n_{2}+\cdots+n_{r}=n}\prod_{i=1}^{r}{\frac{a_{i}}{a_{i}+bn_{i}}\binom{a_{i}+bn_{i}}{n_{i}}}\right)}z^{n}.
	\end{align*}
	The claim then follows by comparing coefficients.
\end{proof}

\begin{proof}[Proof of Theorem~\ref{thm:main_zeta_polynomial}]
	By summing the number of $(q-1)$-multichains over all rank jump vectors given by Theorem~\ref{thm:enc_rs_chain_enumeration}, we get
	\begin{align*}
		\ZetaPol_{\penc_{2n+1}}(q) & = \sum_{r_{1}+r_{2}+\cdots+r_{q}=n}{(2n+1)^{q-1}\prod_{i=1}^{q}{\frac{1}{2n-r_{i}+1}\binom{2n-r_{i}+1}{r_{i}}}}\\
		& = \frac{1}{2n+1}\left(\sum_{r_{1}+r_{2}+\cdots+r_{q}=n}{\prod_{i=1}^{q}{\frac{2n+1}{2n-r_{i}+1}\binom{2n-r_{i}+1}{r_{i}}}}\right)\\
		& \overset{*}{=} \frac{1}{2n+1}\left(\frac{q(2n+1)}{q(2n+1)-n}\binom{q(2n+1)-n}{n}\right)\\
		& = \frac{q}{q(2n+1)-n}\binom{q(2n+1)-n}{n}.
	\end{align*}
	as desired.  The equality marked with a '*' is Lemma~\ref{lem:multi_rothe_hagen} with $r=q$, $a_{1}=a_{2}=\cdots=a_{q}=2n+1$ and $b=-1$.
\end{proof}

By evaluating the previous polynomial at $q=2,3$, and $-1$, respectively, we obtain the following corollary.

\begin{corollary}\label{cor:enc_essentials}
	For $n\geq 1$, we have the following formulas:
	\begin{align}
		\Bigl\lvert\enc_{2n+1}\Bigr\rvert & = \frac{1}{n+1}\binom{3n+1}{n}; \label{eq:enc_card}\\
		\Bigl\lvert\Int\bigl(\penc_{2n+1}\bigr)\Bigr\rvert & = \frac{3}{5n+3}\binom{5n+3}{n};\label{eq:enc_intervals}\\
		(-1)^{n}\mu\bigl(\penc_{2n+1}\bigr) & = \frac{1}{4n+1}\binom{4n+1}{n}.\label{eq:enc_moebius}
	\end{align}
\end{corollary}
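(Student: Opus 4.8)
The plan is to obtain each of the three formulas in Corollary~\ref{cor:enc_essentials} by specializing the zeta polynomial $\ZetaPol(q)=\frac{q}{q(2n+1)-n}\binom{q(2n+1)-n}{n}$ from Theorem~\ref{thm:main_zeta_polynomial} at the appropriate integer values of $q$, and then matching each evaluation against the combinatorial quantity claimed. Since $\ZetaPol(q)$ counts $(q-1)$-multichains of $\penc_{2n+1}$, the values $q=2,3,-1$ correspond respectively to $1$-multichains (i.e.\ elements), $2$-multichains, and the M\"obius number via the identity $\mu(\Poset)=\ZetaPol_{\Poset}(-1)$ recorded in Section~\ref{sec:posets}.

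For \eqref{eq:enc_card} I would set $q=2$. A $1$-multichain is just a single element of the poset, so $\ZetaPol(2)=\lvert\enc_{2n+1}\rvert$. Substituting gives $\frac{2}{2(2n+1)-n}\binom{2(2n+1)-n}{n}=\frac{2}{3n+2}\binom{3n+2}{n}$, and a short manipulation of the binomial coefficient turns this into $\frac{1}{n+1}\binom{3n+1}{n}$; indeed $\frac{2}{3n+2}\binom{3n+2}{n}=\frac{2}{3n+2}\cdot\frac{3n+2}{2n+2}\binom{3n+1}{n}=\frac{1}{n+1}\binom{3n+1}{n}$. For \eqref{eq:enc_intervals} I would set $q=3$: here $\ZetaPol(3)$ counts $2$-multichains $x_1\leqa x_2$, which are exactly the intervals of $\penc_{2n+1}$ (each ordered pair $(x_1,x_2)$ with $x_1\leq x_2$ is an element of $\Int(\penc_{2n+1})$). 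This yields $\frac{3}{3(2n+1)-n}\binom{3(2n+1)-n}{n}=\frac{3}{5n+3}\binom{5n+3}{n}$, which is already the stated form.

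For \eqref{eq:enc_moebius} I would use $\mu(\penc_{2n+1})=\ZetaPol(-1)$. Substituting $q=-1$ gives $\frac{-1}{-(2n+1)-n}\binom{-(2n+1)-n}{n}=\frac{-1}{-(3n+1)}\binom{-(3n+1)}{n}=\frac{1}{3n+1}\binom{-(3n+1)}{n}$, and applying the upper-negation identity $\binom{-(3n+1)}{n}=(-1)^n\binom{3n+1+n-1}{n}=(-1)^n\binom{4n}{n}$ produces $\frac{(-1)^n}{3n+1}\binom{4n}{n}$; after multiplying by $(-1)^n$ this matches $\frac{1}{3n+1}\binom{4n}{n}$, which one checks equals $\frac{1}{4n+1}\binom{4n+1}{n}$ by the same kind of binomial rewriting as in the first case. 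The only mild subtlety, and the step I would be most careful about, is justifying the interpretation $\ZetaPol(-1)=\mu(\penc_{2n+1})$ and handling the negative-argument binomial coefficients correctly via the upper-negation identity; both are standard, so no genuine obstacle is expected, and the proof reduces to the three routine binomial simplifications indicated above.
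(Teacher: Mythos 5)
Your proposal is correct and is exactly the paper's argument: the corollary is obtained by specializing the zeta polynomial of Theorem~\ref{thm:main_zeta_polynomial} at $q=2,3$, and $-1$, using the standard interpretations of $\ZetaPol(2)$, $\ZetaPol(3)$, and $\ZetaPol(-1)$ as the number of elements, the number of intervals, and the M\"obius number of the bounded graded poset $\penc_{2n+1}$. The binomial simplifications you carry out (including the upper-negation step for $q=-1$) all check out; the paper simply leaves these routine manipulations implicit.
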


The formulas appearing in Corollary~\ref{cor:enc_essentials} correspond to sequences \cite{sloane}*{A006013}, \cite{sloane}*{A118970}, and \cite{sloane}*{A002293}, respectively.

\begin{remark}
	Besides studying enumerative aspects of the poset $(\Alt_{N},\leqa)$ we may as well ask for structural or topological properties.  
	
	On the topological side, it is well known that the order complex of the poset $(\Sym_{N},\leqt)$ is Cohen-Macaulay and thus a wedge of spheres~\cite{athanasiadis08absolute}*{Theorem~1}.  Moreover, it follows from \cite{bjorner80shellable}*{Example~2.9} that every interval in $(\Sym_{N},\leqt)$ is in fact (lexicographically) shellable.  
	
	We have verified by computer that every interval in the poset $(\Alt_{N},\leqa)$ is shellable for $N\leq 7$, which leads us to believe that this holds for all $N$.
\end{remark}

\subsection{Bijections}
	\label{sec:bijective}
In this section we reprove \eqref{eq:enc_card} bijectively, and we use this bijection to determine the size of $\enc_{2n}$.

\begin{proof}[Bijective Proof of \eqref{eq:enc_card}]
	Let $N>0$ be an integer.  We first recall the bijection $\GJbij$ from $\nc_{N}$ to the set of edge-rooted bicolored plane trees with $N$ edges due to Goulden and Jackson~\cite{goulden92combinatorial}*{Theorem~2.1}. 
	
	Consider $x\in\nc_{N}$, and let $y=x^{-1}(1\;2\;\ldots\;N)$.  With each cycle of $x$ (respectively $y$) we associate a white (respectively black) vertex in the tree $\GJbij(x)$. The vertex corresponding to a cycle $(a_1\;a_2\;\ldots\;a_p)$ is adjacent to $p$ edges labeled by $a_1,a_{2},\ldots,a_p$ clockwise.  This creates a planar bicolored tree, and in order to obtain $\GJbij(x)$ we root the tree at the edge labeled by $1$, and delete all labels.  To obtain the inverse bijection, we simply need to reconstruct the labels from the tree: this is done by walking around the tree clockwise, and labeling the edges by $1,2,\ldots,N$ starting from the marked edge in the direction from its white to its black vertex.  Clearly this bijection sends the cycle type of $x$ (respectively $y$) to the degree distribution of white (respectively black) vertices in the tree $\GJbij(x)$. 
	
	By Lemma~\ref{lem:multichains_onc}, applied to the case $m=1$, it follows that $\GJbij$ restricts to a bijection between $\enc_{2n+1}$ and marked bicolored plane trees with $2n+1$ edges where all vertices have odd degree.  By deleting the marked edge, we obtain a pair $(T_{1},T_{2})$ of planar \emph{rooted} trees where each node has an even number of children and the total number of edges in $T_{1}$ and $T_{2}$ is $2n$.  In view of \cite{deutsch02diagonally} the set of such pairs of trees is in bijection with the set of pairs of ternary trees with a total of $n$ internal nodes.  That the cardinality of this set is given by \eqref{eq:enc_card} follows for instance from \cite{graham94concrete}*{p.\;201, Equation~(5.60)}.
%
\end{proof}

We may use this bijection to prove the following result.

\begin{proposition}\label{prop:onc_cardinality_even}
	For $n\geq 1$, we have 
	\begin{align*}
		\Bigl\lvert\enc_{2n}\Bigr\rvert = \frac{1}{2n+1}\binom{3n}{n}.
	\end{align*}
\end{proposition}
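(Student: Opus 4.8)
The plan is to mimic exactly the structure of the bijective proof of \eqref{eq:enc_card} that immediately precedes this proposition, adapting it to the even case $\enc_{2n}$. First I would apply the Goulden--Jackson bijection $\GJbij$ from $\nc_{2n}$ to edge-rooted bicolored plane trees with $2n$ edges. Writing $y=x^{-1}(1\;2\;\ldots\;2n)$ as in that proof, the white vertices encode the cycle type of $x$ and the black vertices encode the cycle type of $y$. For $x\in\enc_{2n}$ all cycles of $x$ are odd, so every white vertex has odd degree. The key parity observation is that $x\in\Alto_{2n}$ has an even number of cycles (since $\lt(x)=2n-\cyc(x)$ is even), hence $y=x^{-1}c$ has exactly one even cycle, namely the one containing $1$; I would isolate this as the crucial structural point.

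Next I would identify that unique even cycle of $y$ with the black vertex adjacent to the marked (root) edge, since by construction the root edge is labelled $1$ and points from its white to its black vertex, so the black endpoint corresponds to the cycle of $y$ containing $1$. Therefore $\GJbij$ restricts to a bijection between $\enc_{2n}$ and edge-rooted bicolored plane trees with $2n$ edges in which every vertex has odd degree \emph{except} the black vertex adjacent to the root edge, which has even degree. This is precisely the combinatorial class the paragraph before the proposition describes, so the bulk of the work is just verifying these degree conditions carefully from the definition of $\GJbij$.

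Then I would convert this tree class into the single-tree family $\mathcal{T}^{e}$ of planar rooted trees in which every node has an even number of children. Re-rooting the bicolored tree at the distinguished even-degree black vertex turns its even degree into an even number of children, while every other vertex (which had odd total degree) acquires an odd degree but, having exactly one parent edge, now has an even number of children. Thus the re-rooted tree lies in $\mathcal{T}^{e}$ and has $2n$ edges, giving a bijection $\enc_{2n}\cong\{T\in\mathcal{T}^{e}:\#\text{edges}(T)=2n\}$.

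Finally I would count $\mathcal{T}^{e}$-trees with $2n$ edges. The generating function $x=\sum_{n\ge0}\frac{1}{2n+1}\binom{3n}{n}z^{n}$ counting ternary trees by internal nodes satisfies $x=1+zx^{3}$, which is exactly \eqref{eq:gould_equation} with $s=1,t=3$; trees in $\mathcal{T}^{e}$ are counted by the same series (each internal node having an even number of children corresponds to the ternary-tree bookkeeping via \cite{deutsch02diagonally}), and extracting the coefficient of $z^{n}$ yields $\frac{1}{2n+1}\binom{3n}{n}$. The main obstacle, and the only place demanding genuine care, is the parity bookkeeping in the second step: one must confirm both that $y$ has exactly one even cycle and that it is precisely the black vertex at the root edge, so that re-rooting produces a clean membership in $\mathcal{T}^{e}$; the rest is a direct transcription of the odd-case argument.
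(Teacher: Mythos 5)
Your proposal follows the paper's argument for this proposition essentially verbatim: restrict the Goulden--Jackson bijection $\GJbij$ to $\enc_{2n}$, observe that the unique even-degree vertex of the resulting bicolored tree is the black vertex at the marked edge (because $x$ has an even number of cycles, so the cycle of $y$ containing the root label is the even one), re-root there to land in $\mathcal{T}^{e}$, and count via \eqref{eq:gould_equation} with $s=1$, $t=3$. The paper justifies the degree conditions exactly as tersely as you do (``one shows easily''), so you have matched both the route and its level of detail.
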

\begin{proof}
	Let $x\in\enc_{2n}$.  Since every cycle of $x$ has odd length, and $x$ lives in $\Alt_{2n}$, we conclude that $x$ has an even number of cycles.  Moreover, the permutation $y=x^{-1}(1\;2\;\ldots\;2n)$ has a unique cycle with even length.  (This is the cycle containing $2n$.)  It follows that $\GJbij(x)$ is a marked bicolored plane tree with $2n$ edges, where all vertices have odd degree, except for a single black vertex.
	
	If we make this black vertex the root, we obtain a planar rooted tree with $2n$ edges, where each node has an even number of children, and this process is clearly bijective.  As before, the set of such trees is in bijection with the set of ternary trees with $n$ internal nodes~\cite{deutsch02diagonally}.  The set of such trees is counted by \eqref{eq:gould_equation} with $s=1$ and $t=3$, and yields precisely the formula in the statement.
%
\end{proof}

\begin{example}\label{ex:enc_8}
	Let us illustrate the bijection  with an example.  Let us consider the permutation $x=(1\;14\;15)(3\;4\;7)(8\;9\;10\;11\;12)\in\Alt_{17}$.  Its Kreweras complement is $y=(1\;2\;7\;12\;13)(4\;5\;6)(15\;16\;17)$.  The corresponding labeled planar bicolored tree is shown in Figure~\ref{fig:enc_8_example_labeled_tree}; and the corresponding pair of ternary trees is shown in Figure~\ref{fig:enc_8_example_rooted_trees}.
\end{example}

\begin{figure}
	\centering
	\begin{subfigure}[t]{.46\textwidth}
		\centering
		\begin{tikzpicture}
			\def\x{1};
			\def\y{1};
			\def\s{.4};
			\draw(2*\x,2.2*\y) node[fill,circle,scale=\s](m1){};
			\draw(2*\x,3.8*\y) node[fill,circle,scale=\s](m2){};
			\draw(4*\x,3*\y) node[fill,circle,scale=\s](m3){};
			\draw(5*\x,4.4*\y) node[fill,circle,scale=\s](m4){};
			\draw(5.4*\x,3.6*\y) node[fill,circle,scale=\s](m5){};
			\draw(5.8*\x,3.1*\y) node[fill,circle,scale=\s](m6){};
			\draw(5.8*\x,2.5*\y) node[fill,circle,scale=\s](m7){};
			\draw(5.6*\x,2.1*\y) node[fill,circle,scale=\s](m8){};
			\draw(5*\x,2*\y) node[fill,circle,scale=\s](m9){};
			\draw(3*\x,3*\y) node[draw,circle,scale=\s](n1){};
			\draw(1*\x,4.2*\y) node[draw,circle,scale=\s](n2){};
			\draw(2*\x,4.8*\y) node[draw,circle,scale=\s](n3){};
			\draw(3.8*\x,4*\y) node[draw,circle,scale=\s](n4){};
			\draw(4.6*\x,3.6*\y) node[draw,circle,scale=\s](n5){};
			\draw(5*\x,2.8*\y) node[draw,circle,scale=\s](n6){};
			\draw(3.8*\x,2*\y) node[draw,circle,scale=\s](n7){};
			\draw(6.2*\x,3.9*\y) node[draw,circle,scale=\s](n8){};
			\draw(6.2*\x,3.3*\y) node[draw,circle,scale=\s](n9){};
			\draw(m1) -- (n1) node[fill=white,inner sep=.9pt] at (2.5*\x,2.6*\y) {\tiny $14$};
			\draw(m2) -- (n1) node[fill=white,inner sep=.9pt] at (2.5*\x,3.4*\y) {\tiny $15$};
			\draw(m2) -- (n2) node[fill=white,inner sep=.9pt] at (1.5*\x,4*\y) {\tiny $16$};
			\draw(m2) -- (n3) node[fill=white,inner sep=.9pt] at (2*\x,4.3*\y) {\tiny $17$};
			\draw[very thick,red](m3) -- (n1) node[fill=white,inner sep=.9pt] at (3.5*\x,3*\y) {\tiny $1$};
			\draw(m3) -- (n4) node[fill=white,inner sep=.9pt] at (3.9*\x,3.5*\y) {\tiny $2$};
			\draw(m3) -- (n5) node[fill=white,inner sep=.9pt] at (4.3*\x,3.3*\y) {\tiny $7$};
			\draw(m3) -- (n6) node[fill=white,inner sep=.9pt] at (4.5*\x,2.9*\y) {\tiny $12$};
			\draw(m3) -- (n7) node[fill=white,inner sep=.9pt] at (3.9*\x,2.5*\y) {\tiny $13$};
			\draw(m4) -- (n5) node[fill=white,inner sep=.9pt] at (4.8*\x,4*\y) {\tiny $3$};
			\draw(m5) -- (n5) node[fill=white,inner sep=.9pt] at (5*\x,3.6*\y) {\tiny $4$};
			\draw(m5) -- (n8) node[fill=white,inner sep=.9pt] at (5.8*\x,3.75*\y) {\tiny $5$};
			\draw(m5) -- (n9) node[fill=white,inner sep=.9pt] at (5.8*\x,3.45*\y) {\tiny $6$};
			\draw(m6) -- (n6) node[fill=white,inner sep=.9pt] at (5.4*\x,2.95*\y) {\tiny $8$};
			\draw(m7) -- (n6) node[fill=white,inner sep=.9pt] at (5.4*\x,2.65*\y) {\tiny $9$};
			\draw(m8) -- (n6) node[fill=white,inner sep=.9pt] at (5.3*\x,2.45*\y) {\tiny $10$};
			\draw(m9) -- (n6) node[fill=white,inner sep=.9pt] at (5*\x,2.4*\y) {\tiny $11$};
		\end{tikzpicture}
		\caption{The labeled planar bicolored tree corresponding to $x=(1\;14\;15)(3\;4\;7)(8\;9\;10\;11\;12)$ and $y=(1\;2\;7\;12\;13)(4\;5\;6)(15\;16\;17)$.}
		\label{fig:enc_8_example_labeled_tree}
	\end{subfigure}
	\hspace*{.2cm}
	\begin{subfigure}[t]{.46\textwidth}
		\centering
		\begin{tikzpicture}
			\def\x{1};
			\def\y{1};
			\def\s{.4};
			\draw(2*\x,4*\y) node[fill,circle,scale=\s](n1){};
			\draw(1.5*\x,3*\y) node[fill,circle,scale=\s](n2){};
			\draw(2.5*\x,3*\y) node[fill,circle,scale=\s](n3){};
			\draw(1*\x,2*\y) node[fill,circle,scale=\s](n4){};
			\draw(2*\x,2*\y) node[fill,circle,scale=\s](n5){};
			\draw(4.75*\x,4*\y) node[fill,circle,scale=\s](m1){};
			\draw(3.75*\x,3*\y) node[fill,circle,scale=\s](m2){};
			\draw(4.25*\x,3*\y) node[fill,circle,scale=\s](m3){};
			\draw(5.25*\x,3*\y) node[fill,circle,scale=\s](m4){};
			\draw(5.75*\x,3*\y) node[fill,circle,scale=\s](m5){};
			\draw(3.75*\x,2*\y) node[fill,circle,scale=\s](m6){};
			\draw(4.08*\x,2*\y) node[fill,circle,scale=\s](m7){};
			\draw(4.42*\x,2*\y) node[fill,circle,scale=\s](m8){};
			\draw(4.75*\x,2*\y) node[fill,circle,scale=\s](m9){};
			\draw(5.08*\x,2*\y) node[fill,circle,scale=\s](m10){};
			\draw(5.42*\x,2*\y) node[fill,circle,scale=\s](m11){};
			\draw(4.91*\x,1*\y) node[fill,circle,scale=\s](m12){};
			\draw(5.25*\x,1*\y) node[fill,circle,scale=\s](m13){};
			\draw(n1) -- (n2);
			\draw(n1) -- (n3);
			\draw(n2) -- (n4);
			\draw(n2) -- (n5);
			\draw(m1) -- (m2);
			\draw(m1) -- (m3);
			\draw(m1) -- (m4);
			\draw(m1) -- (m5);
			\draw(m3) -- (m6);
			\draw(m3) -- (m7);
			\draw(m3) -- (m8);
			\draw(m3) -- (m9);
			\draw(m4) -- (m10);
			\draw(m4) -- (m11);
			\draw(m10) -- (m12);
			\draw(m10) -- (m13);
		\end{tikzpicture}
		\caption{The pair of planar rooted trees coming from the tree Figure~\ref{fig:enc_8_example_labeled_tree} by removing the marked edge.}
		\label{fig:enc_8_example_rooted_trees}
	\end{subfigure}
	\caption{An illustration of the bijection $\GJbij$.}
	\label{fig:enc_8_example}
\end{figure}

\begin{remark}\label{rem:altproof}
	We want to sketch how Goulden--Jackson's bijection gives an alternative way to prove certain key results of Section \ref{sec:alternating_noncrossing}. 

	We start with Proposition~\ref{prop:onc_characterization}, so let $c= (1\;2\;\ldots\;2n\!+\!1)$ and $x\in \nc_{2n+1}$. The proposition states that  $x\in\enc_{2n+1}$ if and only if $x\leqa c$. By Lemma \ref{lem:multichains_onc}, one has $x\leqa c$ if and only if $xy=c$ is a minimal factorization and $x,y$ have only odd cycles, where $y=x^{-1}c=K_c(x)$. Via Goulden--Jackson's bijection, this means that all vertices of the marked bicolored planar tree $\Phi(x)$ have odd degree. It is then easily proved, by induction starting from the leaves of $\Phi(x)$, that all cycles of $x$ and $y$ necessarily satisfy Property~\eqref{it:od}, and thus $x$ belongs to $\enc_{2n+1}$. 

	To prove the reverse implication, consider $x\in\enc_{2n+1}$. It is enough to prove that $y=x^{-1}c$ has odd cycles. Equivalently, one must show that the white vertices in $\Phi(x)$ all have odd degree if the cycles of $x$ satisfy Property~\eqref{it:od}, which is also done by induction.

	This approach has the additional advantage that it proves Proposition~\ref{prop:kreweras_invariance} for $y$ a long cycle $(1\;2\;\ldots\;2k\!+\!1)$, and thus bypasses the use of Lemma \ref{lem:value_characterization}.
\end{remark}

\subsection{The Case of Two Even Cycles}
	\label{sec:enumeration_two_even_cycles}
In Section~\ref{sec:alternating_noncrossing} we have extensively studied the principal order ideals in $(\Alt_{N},\leqa)$ induced by permutations consisting of only odd cycles.  In view of Proposition~\ref{prop:interval_decomposition} it remains to study those principal order ideals induced by even permutations consisting of only even cycles.  

We restrict our attention to even permutations which have precisely two even cycles, say $x=x_{p,q}=(a_{1}\;a_{2}\;\ldots\;a_{2p})(b_{1}\;b_{2}\;\ldots\;b_{2q})$ for $p\geq q\geq 1$.  Then, $x_{p,q}\in\Alt_{2(p+q)}$, and Proposition~\ref{prop:alternating_length} implies $\ell_{\bth}(x_{p,q})=p+q$.  Let $N=2(p+q)$.

We are not able to describe a combinatorial model for the elements $y\in\Alt_{N}$ with $y\leqa x$, though there is reason to believe that noncrossing partitions on an annulus could be involved. We can however count the number of reduced decompositions of $x_{p,q}$.

\begin{proposition}\label{prop:max_chains_two_even}
	  The number of maximal chains in $[\id,x_{p,q}]_{\bth}$ is 
	\begin{displaymath}
		\frac{2(p+q-1)!(2p)^{p}(2q)^{q}}{(p-1)!(q-1)!}.
	\end{displaymath}
\end{proposition}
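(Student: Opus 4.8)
The quantity to compute is the number of maximal chains in $[\id,x_{p,q}]_{\bth}$, which by the correspondence recalled in Section~\ref{sec:preliminaries} (maximal chains $=$ $C_{3,N}$-reduced factorizations) equals $\bigl\lvert\red_{\bth}(x_{p,q})\bigr\rvert$. Since $\la(x_{p,q})=p+q$, this is the number of ways to write $x_{p,q}=a_1a_2\cdots a_{p+q}$ as a product of $p+q$ three-cycles. The cleanest route is therefore to count such transitive-type minimal factorizations of a permutation with exactly two even cycles, and the natural tool is the theory of minimal transitive factorizations into $3$-cycles developed by Goulden and Jackson~\cite{goulden92combinatorial} and Bousquet-M\'elou--Schaeffer, which is exactly what was used in Theorem~\ref{thm:enc_rs_chain_enumeration}.

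**Main steps.** First I would translate the problem into counting factorizations of a \emph{single} cycle by passing to a slightly larger symmetric group: a standard trick is to relate factorizations of a product of two cycles $(a_1\cdots a_{2p})(b_1\cdots b_{2q})$ by $3$-cycles to factorizations of a long cycle on $N$ points in which we also allow one extra ``joining'' $2$-cycle, or more directly to use the known formula for the number of factorizations of a fixed permutation of given cycle type into a prescribed number of $3$-cycles. Concretely, I would invoke the formula (already cited as~\cite{goulden92combinatorial}*{Theorem 3.2}, see also~\cite{krattenthaler10decomposition}*{Theorem 5}) that counts minimal factorizations of a permutation whose cycle type is fixed; for a permutation with two cycles of lengths $2p$ and $2q$, the length is $\la = p+q$ and the count comes out as a product of a power of $N=2(p+q)$ with binomial/multinomial factors. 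The second step is purely algebraic: simplify the resulting sum/product to the closed form $\frac{2(p+q-1)!\,(2p)^{p}(2q)^{q}}{(p-1)!(q-1)!}$. I expect the powers $(2p)^p$ and $(2q)^q$ to emerge from the two even cycles (each cycle of length $2m$ contributing a factor $(2m)^{m}$, paralleling the factor $(2n+1)^{n-1}$ for a single odd $(2n+1)$-cycle in Corollary~\ref{cor:enc_rk_and_chain_enumeration}), while the factor $(p+q-1)!/((p-1)!(q-1)!)$ should arise from interleaving the two cycles' factorization steps, i.e.\ from choosing how the $p-1$ ``internal'' splits of one cycle and the $q-1$ of the other are shuffled together with the one extra step that links the two cycles; the leading $2$ accounts for the two even cycles contributing an inherent asymmetry.

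**Alternative/cross-check.** An independent derivation I would run in parallel is via the exponential-generating-function machinery of Proposition~\ref{prop:rankenumeration}: since maximal chains correspond to factorizations into $3$-cycles, one can set up a species/EGF computation tracking the two even cycles and their $3$-cycle splittings, and extract the coefficient. This should reproduce the same closed form and serve as a verification. For a very small case such as $p=q=1$ (so $x=(a_1a_2)(b_1b_2)\in\Alt_4$, $\la=2$), the formula predicts $\frac{2\cdot 1!\cdot 2^1\cdot 2^1}{0!\,0!}=8$ maximal chains, which can be checked by hand against the interval $[\id,x]_{\bth}$ in $\Alt_4$.

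**Main obstacle.** The hard part will be handling the \emph{non-transitivity}: unlike the single-cycle (hence transitive) case, a factorization of $x_{p,q}$ need not act transitively on $\{a_i\}\cup\{b_j\}$, so the pure transitive-factorization formulas do not apply verbatim. I would therefore have to decompose the count according to whether a given minimal factorization keeps the two even cycles' supports separate or mixes them, and sum the contributions; controlling this decomposition — essentially deciding at which step (and in how many ways) the two cycle-supports first become linked by a shared $3$-cycle — is where the interleaving factor $(p+q-1)!/((p-1)!(q-1)!)$ and the leading $2$ must be extracted carefully, and it is the step most likely to require a genuinely new argument rather than a direct appeal to the cited theorems.
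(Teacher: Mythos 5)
Your plan correctly reduces the problem to counting minimal factorizations of $x_{p,q}$ into $3$-cycles, and your $p=q=1$ sanity check is consistent with the rank vector $(1,8,1)$ in Table~\ref{tab:even_cycles_numerology}. However, the step you flag as the ``main obstacle'' --- handling non-transitive factorizations by decomposing according to whether the two supports stay separate or get mixed --- is based on a misreading of the situation, and it is precisely the point where the proposal stalls. In fact \emph{every} minimal factorization of $x_{p,q}$ into $3$-cycles is transitive on $\operatorname{supp}(x_{p,q})$: a single even cycle is an odd permutation, so it cannot be written as a product of $3$-cycles supported inside its own support; equivalently, by Corollary~\ref{cor:covers} the two even cycles must be created by a type~\eqref{it:even} cover whose labelling $3$-cycle meets both supports (this is the content of Lemma~\ref{lem:mixed_generators}). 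So the ``separate'' class you propose to sum over is empty, no new interleaving argument is needed, and the one substantive observation required --- automatic transitivity --- is missing from your write-up. Once it is made, the count is exactly the coefficient $c_3(2p,2q)$ of Goulden and Jackson~\cite{goulden00transitive}, given in closed form by their Theorem~1.4; this is the paper's entire proof.

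A secondary issue is the reference you lean on: \cite{goulden92combinatorial}*{Theorem~3.2} (and \cite{krattenthaler10decomposition}*{Theorem~5}) counts minimal factorizations of a \emph{single long cycle} into factors of prescribed cycle types, which is why it works in Theorem~\ref{thm:enc_rs_chain_enumeration} but does not apply to a permutation with two cycles being factored into $3$-cycles. The correct tool is the transitive-factorization count $c_k(\alpha)$ of \cite{goulden00transitive}, which handles arbitrary cycle type $\alpha$ and factor length $k$; your heuristic reading of where the factors $(2p)^p$, $(2q)^q$ and $(p+q-1)!/((p-1)!(q-1)!)$ come from is a reasonable gloss on that formula, but as written the proposal neither derives it nor correctly identifies the result that delivers it.
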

\begin{proof}
	We have to count the minimal factorizations of $x_{p,q}$ in $3$-cycles; these factorizations are easily seen to be transitive. Therefore the number that we seek is precisely the coefficient $c_3(2p,2q)$ in \cite{goulden00transitive}, and can be computed via Theorem~1.4 in that paper.
\end{proof}

Let us call an element $y$ of $[\id,x_{p,q}]_{\bth}$ \defn{pure} if the support of any cycle of $y$ is included either in $\{a_{1},a_{2},\ldots,a_{2p}\}$ or $\{b_{1},b_{2},\ldots,b_{2q}\}$; such elements are called  \defn{even} if they contain a cycle of even length, and \defn{odd} otherwise.  Finally, $y$ is \defn{mixed} if it is not pure.  

\begin{proposition}
	There  is a bijection between  even pure elements of $[\id,x_{p,q}]_{\bth}$  and odd ones.  Their common cardinality is given by
	\begin{displaymath}
		\binom{3p-1}{p-1}\binom{3q-1}{q-1}.
	\end{displaymath}
\end{proposition}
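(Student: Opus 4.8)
The plan is to reduce everything to a single even cycle and then to a tree enumeration. Write $A=\{a_1,\dots,a_{2p}\}$, $B=\{b_1,\dots,b_{2q}\}$, $c_A=(a_1\,\cdots\,a_{2p})$, $c_B=(b_1\,\cdots\,b_{2q})$. First I would establish that a pure element factors through its two blocks in a rigid way. A pure $y$ is block-diagonal, $y=y_Ay_B$, and so is its Kreweras complement $z=y^{-1}x_{p,q}=z_Az_B$ with $z_A=y_A^{-1}c_A$. Using $\oc(y)=\oc_A(y_A)+\oc_B(y_B)$ and Proposition~\ref{prop:alternating_length}, the length splits as $\la(y)=\la_A(y_A)+\la_B(y_B)$ with $\la_A(w):=\tfrac{2p-\oc_A(w)}2$; moreover $2\la_A(w)=\lt_A(w)+\epsilon_A(w)$, where $\lt_A$ is the transposition length inside $A$ and $\epsilon_A(w)$ counts the even cycles. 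Since $2\la_A$ is an even integer and $\lt_A(y_A)+\lt_A(z_A)\ge\lt_A(c_A)=2p-1$, with equality exactly when $y_A\leqt c_A$, the condition $y\leqa x_{p,q}$ forces $\la_A(y_A)+\la_A(z_A)=p$ and $\la_B(y_B)+\la_B(z_B)=q$, which in turn forces $y_A\leqt c_A$, $y_B\leqt c_B$, and $\epsilon_A(y_A)+\epsilon_A(z_A)=\epsilon_B(y_B)+\epsilon_B(z_B)=1$. Thus each block contributes a noncrossing permutation carrying exactly one even cycle shared between it and its Kreweras complement.

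Second, I would extract the bijection from this rigidity. Because $y\in\Alt_N$, the sign constraint gives $\epsilon_A(y_A)\equiv\epsilon_B(y_B)\pmod 2$; combined with $\epsilon_\bullet(y_\bullet)\in\{0,1\}$ this shows a pure element is either \emph{odd} (all cycles odd, i.e.\ $\epsilon_A(y_A)=\epsilon_B(y_B)=0$) or \emph{even} ($\epsilon_A(y_A)=\epsilon_B(y_B)=1$). The Kreweras complement $K_{x_{p,q}}\colon y\mapsto y^{-1}x_{p,q}$ preserves pureness and, since the unique even cycle of each block switches between $y_\bullet$ and $z_\bullet$, interchanges the odd and even pure elements; being a restriction of the bijection of Proposition~\ref{prop:generalized_kreweras}, it is itself a bijection. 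This proves the first assertion and reduces the count to the odd pure elements, which split as a product over the two blocks: their number is $\lvert O_A\rvert\cdot\lvert O_B\rvert$, where $O_A$ is the set of $y_A\leqt c_A$ all of whose cycles are odd and whose Kreweras complement has exactly one even cycle.

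Third, the enumeration. I would count $O_A$ through the Goulden--Jackson bijection $\GJbij$ of Section~\ref{sec:bijective}: $O_A$ corresponds to edge-rooted bicolored plane trees with $2p$ edges in which every white vertex and every black vertex but one has odd degree, the exceptional black vertex of even degree being the single even cycle of $z_A$. Deleting the root edge produces a pair of plane trees, and, as in the treatment of $\enc_{2n+1}$ and $\enc_{2n}$, I would set up the generating function with $z$ marking edges. Letting $T=1+z^2T^3$ be the series of plane trees all of whose vertices have an even number of children (the series of $\mathcal{T}^{e}$) and $F=T-1$, a careful bookkeeping gives the series of $O_A$ as $\dfrac{F}{1-2F}$. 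Lagrange inversion applied to $F=z^2(1+F)^3$ then yields $[z^{2p}]\dfrac{F}{1-2F}=\dfrac13\binom{3p}{p}=\binom{3p-1}{p-1}$, and multiplying the contributions of the two blocks gives $\binom{3p-1}{p-1}\binom{3q-1}{q-1}$.

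The hard part will be this last step. The structural reduction and the Kreweras bijection are routine once the identity $2\la_A=\lt_A+\epsilon_A$ is in place, but the tree count is delicate: a generating function that ignores the colors overcounts, because placing the even-degree ``defect'' on a white vertex would force a forbidden even-degree white vertex. Hence the $2$-coloring must be propagated through the recursion, producing mutually recursive series for black- and white-rooted trees (with the defect confined to black vertices) that collapse to $F/(1-2F)$; checking the series against the values $1,5,28$ for $p=1,2,3$ is a useful safeguard. Alternatively one could bypass trees and obtain $\lvert O_A\rvert$ by summing the genus-$0$ factorization counts of \cite{goulden92combinatorial} over the admissible cycle types of $y_A$ and $z_A$, evaluating the resulting sum with the Lagrange-type identity of Lemma~\ref{lem:multi_rothe_hagen}.
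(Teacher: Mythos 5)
Your proposal follows essentially the same route as the paper's own sketch: the Kreweras complement $y\mapsto y^{-1}x_{p,q}$ as the parity-exchanging bijection, the block decomposition of pure elements reducing the count to a product over the two cycles, and the Goulden--Jackson bijection converting each block's contribution into edge-rooted bicolored plane trees with exactly one even-degree vertex. The only divergences are in presentation: your explicit bookkeeping via $2\la=\lt+\#\{\text{even cycles}\}$ fills in what the paper dismisses with ``one checks easily,'' and where the paper re-roots at the even-degree vertex to obtain a tree in $\mathcal{T}^{e}$ with a marked edge in its leftmost subtree, you reach the same count $\binom{3p-1}{p-1}$ through the generating function $F/(1-2F)$ with $F=z^{2}(1+F)^{3}$ and Lagrange inversion.
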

\begin{proof}[Sketch of the Proof]
	The Kreweras complement $K_{x_{p,q}}$ is the desired bijection:  it is clear that it leaves stable the set of pure elements, and one checks easily that it exchanges the parity. 

	An even pure element can be decomposed uniquely as a product of a permutation with support in $\{a_1,a_2,\ldots,a_{2p}\}$ and a permutation with support in $\{b_1,b_2,\ldots,b_{2q}\}$; we only need to focus on the first permutation. In the same way as Proposition \ref{prop:onc_characterization}, one can show that this permutation is a noncrossing partition of  $\{a_1,a_2,\ldots, a_{2p}\}$ with exactly one even cycle, and where all cycles $(x_1<x_2<\cdots < x_t)$ have the property that $x_{i+1}-x_i$ is odd for all $i$.  Goulden--Jackson's bijection $\Phi$ then encodes these noncrossing partitions as edge-rooted planar trees with $2p$ edges with one black vertex of even degree and all other vertices (white or black) of odd degree. By making the even-degree vertex the root, we obtain a planar rooted tree with $2p$ edges where each node has an even number of children.  As before we use standard results on tree enumeration to show that the number of such trees is $\binom{3p-1}{p-1}$.
\end{proof}

We do not know of a formula for the number $m_{p,q}$ of mixed elements of $[\id,x_{p,q}]_{\bth}$, and consequently neither for the total number $t_{p,q}$ of elements of $[\id,x_{p,q}]_{\bth}$.  Let us, moreover, denote by $\mu_{p,q}$ the M{\"o}bius number of $[\id,x_{p,q}]_{\bth}$, and let $r_{p,q}$ be the corresponding rank-vector.  Table~\ref{tab:even_cycles_numerology} lists a few of those numbers for small values of $p+q$.

\begin{table}
	\centering
	\begin{tabular}{cc|cc|c|c}
		$p$ & $q$ & $m_{p,q}$ & $t_{p,q}$ & $\mu_{p,q}$ & $r_{p,q}$\\
		\hline\hline
		$1$ & $1$ & $8$ & $10$ & $7$ & $(1,8,1)$ \\
		\hline
		$2$ & $1$ & $48$ & $58$ & $-73$ & $(1,28,28,1)$ \\
		\hline
		$3$ & $1$ & $294$ & $350$ & $671$ & $(1,66,216,66,1)$ \\
		$2$ & $2$ & $336$ & $386$ & $863$ & $(1,72,240,72,1)$ \\
		\hline
		$4$ & $1$ & $1824$ & $2154$ & $-6041$ & $(1,128,948,948,128,1)$ \\
		$3$ & $2$ & $2208$ & $2488$ & $-8495$ & $(1,142,1101,1101,142,1)$ \\
		\hline
		$5$ & $1$ & $11440$ & $13442$ & $54264$ & $(1,220,3050,6900,3050,220,1)$ \\
		$4$ & $2$ & $14304$ & $15954$ & $79855$ & $(1,244,3604,8256,3604,244,1)$ \\
		$3$ & $3$ & $15144$ & $16712$ & $87287$ & $(1,252,3771,8664,3771,252,1)$ \\
		\hline
		$6$ & $1$ & $72384$ & $84760$ & $-488543$ & $(1,348,8046,33985,33985,8046,348,1)$ \\
		$5$ & $2$ & $92400$ & $102410$ & $-738948$ & $(1,384,9545,41275,41275,9545,384,1)$ \\
		$4$ & $3$ & $100992$ & $110232$ & $-845023$ & $(1,402,10237,44476,44476,10237,402,1)$ \\
	\end{tabular}
	\caption{The numerology of the intervals $[\id,x_{p,q}]_{\bth}$ for small values of $p+q$.}
	\label{tab:even_cycles_numerology}
\end{table}

\section{Hurwitz Action}
	\label{sec:hurwitz}
This section is devoted to the enumeration of Hurwitz orbits on the set $\red_{\bth}(x)$ of reduced factorizations of $x\in\Alt_{N}$ into $3$-cycles.

\begin{theorem}[Same statement as Theorem~\ref{thm:main_hurwitz_orbits}]
\label{thm:main_hurwitz_orbits_bis}
	Let $x\in\Alt_{N}$ for $N\geq 3$, and write $2k$ for its number of cycles of even length. 
 The Hurwitz action on $\red_{\bth}(x)$ has $(2k)_{k}=(k+1)(k+2)\cdots(2k)$ orbits.
\end{theorem}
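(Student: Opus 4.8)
The plan is to decompose $\red_{\bth}(x)$ according to a connectivity invariant of the Hurwitz action, reduce everything to two local building blocks, and then count. First I would record the basic invariant: for a reduced factorization $F\colon x=t_1t_2\cdots t_m$ the subgroup $\langle t_1,\ldots,t_m\rangle\le\Sym_N$ is unchanged by the Hurwitz operators, since $\langle t_{i+1},t_{i+1}^{-1}t_it_{i+1}\rangle=\langle t_i,t_{i+1}\rangle$. Hence the partition of $[N]$ into the orbits of this subgroup --- the \emph{components} of $F$ --- is a Hurwitz invariant, and because $x$ lies in this subgroup every cycle of $x$ sits inside a single component. Restricting $F$ to a component $K$ yields a \emph{transitive} factorization of the part $w_K$ of $x$ supported on $K$; since letters supported on distinct components commute, lengths add, so (using that $\la$ is additive over cycles by Proposition~\ref{prop:alternating_length}) $\la(x)=\sum_K\la(w_K)$ and each restriction is itself reduced.

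Second, I would pin down the components using a genus count. A reduced transitive factorization of $w_K$ into $L_K=\la(w_K)$ three-cycles is a transitive factorization into $2L_K$ transpositions of a permutation on $n_K$ points with $c_K$ cycles, so the genus (Euler-characteristic) count for transitive transposition factorizations gives $2L_K=n_K+c_K-2+2g$ with $g\ge 0$. Substituting $L_K=\la(w_K)=\tfrac{n_K-\oc(w_K)}{2}$ forces $\oc(w_K)+c_K=2-2g$, whence $g=0$ and $\oc(w_K)+c_K=2$. Writing $c_K=\oc(w_K)+2k_K$ this reads $\oc(w_K)+k_K=1$, so every component is either a single odd cycle or a pair of even cycles carrying no odd cycle, and is always of genus $0$. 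In particular the odd cycles of $x$ are forced to be singleton components, so the component invariant of any $F$ reduces to a matching of the $2k$ even cycles of $x$ into $k$ unordered pairs; there are $(2k-1)!!$ such matchings, and each is realized by concatenating a genus-$0$ factorization for every pair and every odd cycle.

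Third, I would reduce the orbit count to the two local blocks. Because letters of different components commute, a cross-component Hurwitz operator merely transposes the two (unchanged) letters, while a within-component operator can always be applied after first commuting the two relevant letters together; thus for a fixed component partition the Hurwitz action splits as a shuffle of the components, all interleavings of fixed component-subwords form a single class, and the number of orbits is the product over components of the local orbit counts. It then remains to prove that a single odd cycle contributes $1$ orbit (Hurwitz transitivity on reduced factorizations of an odd cycle into $3$-cycles) and that a pair of even cycles $x_{p,q}$ contributes exactly $2$ orbits (the case $k=1$). Granting these, each matching yields $2^{k}$ orbits, and the total is $(2k-1)!!\cdot 2^{k}=(k+1)(k+2)\cdots(2k)=(2k)_{k}$.

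The main obstacle is precisely these two local statements, which do not follow from the structure theory and require genuine Hurwitz-move arguments. For the single odd cycle I would induct on its length, using the cover description of Corollary~\ref{cor:covers}\eqref{it:oddo} to bring a prescribed ``first'' $3$-cycle to the front and then delete it. For the even pair I expect a complete $\mathbb{Z}/2$-invariant distinguishing the two orbits --- morally a parity recording how the two even cycles interleave --- and the real work is to show it is the only obstruction to transitivity; here the Kreweras complement, which exchanges the parity of pure elements in the two-even-cycle analysis, should be the natural device for exhibiting a representative in each class.
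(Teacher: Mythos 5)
Your reduction to components is correct and is in fact a more conceptual route to the paper's Lemma~\ref{lem:matchings}: where the paper analyses cover types from Corollary~\ref{cor:covers} to show that the even cycles are matched in pairs, you obtain the same conclusion (plus the extra information that each component is exactly one odd cycle or exactly one pair of even cycles, always of genus $0$) from the Hurwitz-invariance of the subgroup generated by the letters together with the Riemann--Hurwitz count $2L_K=n_K+c_K-2+2g$; substituting $2L_K=n_K-\oc(w_K)$ indeed forces $g=0$ and $\oc(w_K)+k_K=1$. That part I would accept as written, and the passage from local orbit counts to $(2k-1)!!\cdot 2^k=(2k)_k$ is the same bookkeeping as in the paper.

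The gap is that the two ``local building blocks'' you defer are not side conditions but the actual content of the theorem, and your proposal proves neither. First, transitivity of the Hurwitz action on $\red_{\bth}(\zeta)$ for a single odd cycle $\zeta$ is the forward direction of Proposition~\ref{prop:hurwitz_transitive}; the paper's proof is not a soft induction on covers but requires exhibiting, for \emph{every} $3$-cycle $a\leqa c$, a word $a\ww_a$ in the orbit of a fixed base word $\ww_c$, via explicit braid computations (the words $\ww_{k,j}$ and the action of $(\sigma_1\sigma_2\cdots\sigma_{n-1})^n$). ``Bring a prescribed first letter to the front and delete it'' is precisely the step that needs justification, and it is where the work lies. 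Second, for a pair of even cycles you have not defined the parity invariant, let alone shown it is well defined: the paper's Lemma~\ref{lem:mixed_generators} --- that every reduced word contains at least two mixed generators and that all of them share the same parity --- is a genuine case analysis, and it is exactly what makes ``the parity of the mixed generators of $\ww$'' a Hurwitz invariant at all. You then still need that both parities are realized (easy, via the explicit words $\xx_1$ and $\xx_2$) and, crucially, that each parity class is a single orbit, which is the paper's Lemma~\ref{lem:technical} and occupies the longest computation in Section~\ref{sec:hurwitz}; appealing to the Kreweras complement does not obviously produce the required chains of Hurwitz moves. As it stands, your argument establishes only the lower bound of $(2k)_k$ orbits.
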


Let us illustrate Theorem~\ref{thm:main_hurwitz_orbits} with a small example.  Consider the element $x=(1\;2)(3\;4)\in\Alt_{4}$.  According to Proposition~\ref{prop:max_chains_two_even}, $x$ has precisely eight reduced factorizations, and we can check that they fall into the following two Hurwitz orbits:
\begin{displaymath}\begin{aligned}
	& (1\;2\;4)(2\;4\;3), && (1\;3\;4)(1\;2\;4), && (1\;3\;2)(1\;3\;4), && (2\;4\;3)(1\;3\;2);\\
	& (2\;3\;4)(1\;4\;2), && (1\;4\;2)(1\;4\;3), && (1\;4\;3)(1\;2\;3), && (1\;2\;3)(2\;3\;4).
\end{aligned}\end{displaymath}

In the course of the proof of Theorem~\ref{thm:main_hurwitz_orbits}, we will in fact precisely describe the orbits. The case $k=0$ emphasizes once again the special role of $\Alto_{N}$.  Let us state this separately.

\begin{proposition}\label{prop:hurwitz_transitive}
	For $N\geq 3$ and $x\in\Alt_{N}$, we have $x\in\Alto_{N}$ if and only if $\Braid_{\la(x)}$ acts transitively on $\red_{\bth}(x)$.
\end{proposition}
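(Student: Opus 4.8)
The plan is to prove both implications through the product decomposition of reduced factorizations and then to isolate the two genuinely indecomposable building blocks: a single odd cycle and a product of even cycles.

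First I would establish a multiplicativity principle for Hurwitz orbits. Writing $x=\zeta_1\cdots\zeta_r\,\xi$ with $\zeta_1,\dots,\zeta_r$ the odd cycles and $\xi$ the product of the even cycles, as in Proposition~\ref{prop:interval_decomposition}, I claim every word in $\red_\bth(x)$ is a shuffle of reduced factorizations of $\zeta_1,\dots,\zeta_r,\xi$, each of whose letters has support inside a single block. Indeed, every letter of a reduced factorization is an atom below $x$, and under the isomorphism $[\id,x]_\bth\cong\prod_i[\id,\zeta_i]_\bth\times[\id,\xi]_\bth$ an atom lies in exactly one factor, so its support sits in one block. Letters from different blocks commute, so by the remark following Equation~\eqref{eq:hurwitz_right} the operators $\sigma_i^{\pm1}$ merely transpose them; consequently the action preserves the block-subsequences up to within-block Hurwitz moves and arbitrary reshuffling. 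This gives a bijection between orbits of $\red_\bth(x)$ and tuples of orbits of $\red_\bth(\zeta_1),\dots,\red_\bth(\zeta_r),\red_\bth(\xi)$, so the number of orbits is the product of the numbers of orbits of the factors. Thus the proposition reduces to two statements: (i) a single odd cycle has a transitive Hurwitz action, and (ii) a (necessarily even, hence nonzero) number of even cycles produces at least two orbits.

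For (i) I would argue by induction on $n$ for $c=(1\;2\;\ldots\;2n+1)$. The base case $n=1$ is trivial, as $\red_\bth\bigl((1\;2\;3)\bigr)$ is a singleton. For the step, observe that $c$ has a lower cover obtained by splitting it into the three odd cycles $(3\;4\;\ldots\;2n+1)$, $(1)$ and $(2)$; let $a$ be the corresponding atom, so that $a^{-1}c=(3\;4\;\ldots\;2n+1)$ is a single shorter odd cycle. Any reduced factorization whose first letter is $a$ then has its remaining $n-1$ letters forming a reduced factorization of this shorter cycle, to which the inductive hypothesis applies (the Hurwitz orbit structure depends only on the cycle type). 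It therefore suffices to show that every reduced factorization of $c$ is Hurwitz-equivalent to one whose first letter is $a$. This normalization is where the noncrossing structure $[\id,c]_\bth=\penc_{2n+1}$ from Section~\ref{sec:alternating_noncrossing} enters: the atoms below $c$ are exactly the noncrossing odd $3$-cycles, and one brings $a$ into first position by local Hurwitz moves; this is the technical heart of (i).

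For (ii), and hence for the reverse implication, the smallest case $\xi=(1\;2)(3\;4)$ is already instructive and can be settled directly: its reduced factorizations have length two, the braid group $\Braid_2$ is cyclic, and the full twist $\sigma_1^{2}$ acts by conjugation by $\xi$, an involution; hence every orbit has size dividing $4$, while $\bigl\lvert\red_\bth(\xi)\bigr\rvert=8$, forcing at least two orbits. I expect the main obstacle to be upgrading (ii) to an arbitrary product of even cycles, where no further product decomposition is available: precisely because an even cycle is an odd permutation and so cannot be factored into $3$-cycles on its own, every reduced factorization must entangle the even blocks. Producing a Hurwitz-invariant statistic that is provably non-constant in this entangled regime is the crux, and it is exactly the refined analysis that the proof of Theorem~\ref{thm:main_hurwitz_orbits_bis} must carry out (yielding the exact count $(2k)_k\geq 2$); the reverse implication of the present proposition is its $k\geq1$ shadow.
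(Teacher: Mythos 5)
Your overall architecture is sound and coincides with the paper's: reduce via the product decomposition of Proposition~\ref{prop:interval_decomposition} to (i) a single odd cycle and (ii) a product of even cycles, and your observation that every letter of a reduced word is an atom supported in a single block (so that orbits multiply across blocks) is correct. The $\Braid_{2}$ full-twist argument for $(1\;2)(3\;4)$ is also correct and is a nice elementary way to settle that one case. However, the two steps you yourself flag as ``the technical heart'' and ``the crux'' are genuine gaps, and they are exactly where all the work in the paper lies.

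For (i), ``one brings $a$ into first position by local Hurwitz moves'' is an assertion, not a proof. The paper's argument here occupies most of its proof of the forward direction: it fixes $\ww_c=u_1u_3\cdots u_{2n-1}$ and, via explicit braid words (the families $\ww_{k,j}$, together with the fact that $(\sigma_1\cdots\sigma_{n-1})^n$ conjugates every letter by $c$), shows that \emph{every} $3$-cycle $a\leqa c$ occurs as the first letter of some word in the orbit of $\ww_c$; only then does induction on $a^{-1}c$ close the argument. Nothing in the noncrossing structure of $\penc_{2n+1}$ makes this normalization automatic, so your sketch omits the main computation. For (ii), your reduction stops at the product $\xi$ of \emph{all} even cycles, and the full-twist argument does not extend beyond length $2$: for $\la(\xi)>2$ the full twist only generates the center of $\Braid_{\la(\xi)}$ and the quotient is infinite, so no bound on orbit sizes follows. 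The paper needs two further ingredients you do not supply: Lemma~\ref{lem:matchings}, which shows each reduced word induces a Hurwitz-invariant \emph{matching} of the even cycles (so the problem does decompose further, contrary to your remark that no decomposition is available), and Lemma~\ref{lem:mixed_generators}, which produces the distinguishing invariant --- the common parity of the mixed generators --- and exhibits words of both parities. Deferring these to ``the refined analysis that the proof of Theorem~\ref{thm:main_hurwitz_orbits_bis} must carry out'' means the reverse implication is not actually proved here.
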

	
We start with the direct implication: if $x\in\Alto_{N}$, then the Hurwitz action is transitive on $\red_{\bth}(x)$. 

 \begin{proof}[Proof of Proposition \ref{prop:hurwitz_transitive} ($\Rightarrow$)]
We reason by induction on $\la(x)$. The result is trivial when $x$ is the identity, so we assume that $\la(x)>0$ and that the claim holds for all elements of $\Alto_{N}$ with length smaller than $\la(x)$.

	Suppose first that $x$ has more than one (non-trivial) cycle in its decomposition, say $x=\zeta_1\zeta_2\cdots\zeta_k$ with $k\geq 2$, so that $\la(\zeta_i)<\la(x)$ for all $i$.  By construction, since $x\in\Alto_{N}$ so are its cycles.  Let $\ww\in\red_{\bth}(x)$.  By Proposition~\ref{prop:interval_decomposition}, $\ww$ is a shuffle of $k$ reduced words $\ww_i\in\red_{\bth}(\zeta_{i})$, and moreover the letters involved in distinct $\ww_i$ commute since the corresponding $3$-cycles have disjoint support.  Therefore the Hurwitz action allows us to bring $\ww$ to the form $\ww_1\ww_2\cdots\ww_k$.  Now by induction the Hurwitz action is transitive on $\red_{\bth}(\zeta_i)$ for all $i\in[k]$, and therefore also on $\red_{\bth}(x)$.

	If $x$ is a single cycle, we can assume without loss of generality that $x=c=(1\;2\;\ldots\;2n\!+\!1)$ which has length $\la(c)=n$.  Let us write $u_{i}=(i,i\!+\!1,i\!+\!2)$ for $i\in[N-2]$, and fix the reduced word $\ww_c=u_{1}u_{3}\cdots u_{2n-1}\in\red_{\bth}(c)$. 

	For any $k\in\{0,1,\ldots,n-1\}$ define $\ww_k:=\sigma^{-1}_1\sigma_2^{-1}\cdots\sigma_k^{-1} \ww_c$.  A direct computation shows that 
	\begin{displaymath}
		\ww_k=(1,2k\!+\!2,2k\!+\!3)u_{1}u_{3}\cdots\reallywidehat{u_{2k+1}}\cdots u_{2n-1},
	\end{displaymath}
	where the hat indicates omission.  Now for $j\in\{0,1,\ldots,n-k-1\}$ define 
	\begin{displaymath}
		\ww_{k,j}:=(1,2k\!+\!2,2k\!+\!2j\!+\!3)\;\xx\;\yy\;\zz,
	\end{displaymath}
	where the words $\xx,\yy,\zz$ are given by
	\begin{align*}
		\xx & = u_{2k+2j+2}u_{2k+2j}\cdots u_{2k+2}\\
		\yy & = u_{1}u_{3}\cdots u_{2k-1}\\
		\zz & = u_{2k+2j+3}u_{2k+2j+5}\cdots u_{2n-1}.
	\end{align*}
	In particular $\ww_{k,0}=\ww_k$.  By induction on $j$ it is verified that
	\begin{displaymath}
		\sigma_1^2\sigma_2\cdots\sigma_{k+j}\ww_{k,j}=\ww_{k,j+1},
	\end{displaymath}
	which implies that all words $\ww_{k,j}$ are Hurwitz-equivalent to $\ww_c$. 

	Furthermore, it is easily shown that $(\sigma_1\sigma_2\cdots\sigma_{n-1})^n$ acts on a word of length $n$ consisting of $3$-cycles by conjugating each letter by $c$.  The characterization in Proposition~\ref{prop:onc_characterization} implies that any $3$-cycle below $c$ is conjugate to a $3$-cycle of the form $(1,{2k\!+\!2j\!+\!2},{2k\!+\!2j\!+\!3})$.  So we proved that for any $3$-cycle $a$ below $c$, there exists a word $\ww_a$ such that $a\ww_a$ is Hurwitz-equivalent to $\ww_c$.

	Now pick any reduced word for $c$, and write it as $a\ww$ where $a\in A$ is its first letter, and let $x=a^{-1}c$ be the element represented by $\ww$. Then $x$ is in $\Alto_{N}$ by Proposition~\ref{prop:AAo_first_properties}, and so by induction the Hurwitz action is transitive on its reduced expressions. In particular $\ww$ is Hurwitz-equivalent to $\ww_{a}$, and so from the previous paragraph $a\ww$ is Hurwitz-equivalent to $\ww_c$, and the direct implication in Proposition~\ref{prop:hurwitz_transitive} is proved.
\end{proof}

\begin{remark}
	The recursive structure of the proof is inspired by \cite{bessis03dual}*{Proposition 1.6.1}. The result is also a special case of  \cite{lando04graphs}*{Theorem~5.4.11}. The proof there is of a geometric nature, using the dictionary between factorizations and ramified coverings of the Riemann sphere.
\end{remark}

We now deal with the case where $x\in\Alt_{N}$ has $2k$ even cycles for $k>0$.  For $\ww\in\red_{\bth}(x)$ we define an equivalence relation $M_\ww$ on the set of even cycles of $x$ as follows: $M_\ww$ is the finest relation such that $\zeta\sim\zeta'$ whenever there exists a letter of $\ww$ whose support intersects the supports of both $\zeta$ and $\zeta'$.

Let us illustrate the relation $M_{\ww}$ with a concrete example.  Consider 
\begin{displaymath}
	x = (1\;5\;4\;7)(2\;9)(3\;12\;8\;6\;10\;15)(11)(13\;18\;14)(16\;17)\in\Alt_{18},
\end{displaymath}
and the reduced factorization
\begin{displaymath}
	\ww = (1\;2\;9)\cdot(1\;7\;9)\cdot(3\;6\;17)\cdot(1\;5\;4)\cdot(6\;17\;16)\cdot(3\;12\;8)\cdot(6\;10\;15)\cdot(13\;18\;14).
\end{displaymath}
The even cycles of $x$ are $\zeta_{1}=(1\;5\;4\;7)$, $\zeta_{2}=(2\;9)$, $\zeta_{3}=(3\;12\;8\;6\;10\;15)$, and $\zeta_{4}=(16\;17)$.  The supports of the letters $(1\;2\;9)$ and $(1\;7\;9)$ of $\ww$ both intersect the supports of $\zeta_{1}$ and $\zeta_{2}$, so we have $\zeta_{1}\sim\zeta_{2}$.  Moreover, the supports of the letters $(3\;6\;17)$ and $(6\;17\;16)$ of $\ww$ both intersect the supports of $\zeta_{3}$ and $\zeta_{4}$, so we have $\zeta_{3}\sim\zeta_{4}$.  Therefore the equivalence classes of  $M_{\ww}$ are given by $\{\zeta_{1},\zeta_{2}\}$ and $\{\zeta_{3},\zeta_{4}\}$.

\begin{lemma}\label{lem:matchings}
	Let  $x\in \Alt_{N}$ and $\ww\in\red_{\bth}(x)$. All equivalence classes of $M_\ww$ consist of two cycles, \ie $M_\ww$ is a (perfect) \emph{matching}. Moreover, $M_\ww$ is invariant under the Hurwitz action on $\red_{\bth}(x)$, \ie for any $i<\la(x)$, we have $M_{\sigma_i^{\pm 1}\ww}=M_\ww$.
\end{lemma}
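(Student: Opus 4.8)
The plan is to prove the two assertions in turn, establishing the matching property first and then deducing invariance from it. Throughout I read the reduced word $\ww=t_1t_2\cdots t_m$ with $m=\la(x)$ as a maximal chain $\id\lessdot_\bth t_1\lessdot_\bth t_1t_2\lessdot_\bth\cdots\lessdot_\bth x$, so that each right multiplication $y_{i-1}\mapsto y_{i-1}t_i$ realises one of the two length-increasing moves of Proposition~\ref{prop:summary_length}: either (A) three pairwise disjoint cycles, at least two of them odd, are merged into a single cycle, or (B) two odd cycles are replaced by two even cycles (one odd cycle is split and one of its parts is reattached to the other). The decisive bookkeeping observation is that move (A) never changes the number of even cycles, whereas move (B) creates exactly two even cycles and consumes only odd cycles. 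Consequently even cycles are born in twin pairs, are never merged with one another, and once even they can only grow by absorbing odd cycles and are never split again. Tracking each even cycle of $x$ back to its birth therefore defines a perfect matching $M$ on the $2k$ even cycles: pair each even cycle with its birth twin.

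First I would establish the local structure of a single letter. Fix $t_i$ and follow its three support points forward from $y_i$. If $t_i$ performs a move (B) these three points land in the two twins it creates; if it performs a move (A) they lie in one common cycle $C$ of $y_i$. In the latter case, as long as the cycle carrying these points stays odd it is only ever grown or absorbed as a single block, and the unique event that can ever separate the three points is one later move (B) splitting that cycle into two even twins, after which no further splitting is possible. The upshot is a purity statement: the support of any letter lies entirely inside a single cycle of $x$, or inside the union of one twin pair of even cycles; in particular no letter has support points simultaneously in an even and in an odd cycle, and a letter meeting two distinct even cycles meets precisely a twin pair. This is the technical heart of the argument and the step I expect to require the most care. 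Granting it, the birth letter of each twin pair witnesses $\zeta\sim\zeta'$ for the twins while no letter links two non-twins, so $M_\ww=M$ is a perfect matching, which proves the first assertion.

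For invariance I would compare the graphs $G_\ww$ and $G_{\sigma_i^{\pm1}\ww}$ on the even cycles of $x$, in which each letter contributes a clique on the even cycles it meets; the two graphs differ only in the edges coming from the modified adjacent pair. The basic tool is the identity $\mathrm{supp}(b)\cup\mathrm{supp}(b^{-1}ab)=\mathrm{supp}(a)\cup\mathrm{supp}(b)$, valid for all $a,b$ since $b^{-1}$ fixes $\mathrm{supp}(a)\setminus\mathrm{supp}(b)$ pointwise and preserves $\mathrm{supp}(b)$. When the pair $(t_i,t_{i+1})$ has disjoint supports the letters commute and there is nothing to check; otherwise they share a point, and by the purity statement both letters are then of the same type. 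If both are odd-type, neither contributes an edge and the conjugated letter is again odd-type, so no edge is affected. If both are even-type, the shared point forces both to live inside the union of a single $M_\ww$-twin pair $P$; the support identity then gives $\mathrm{supp}(b)\cup\mathrm{supp}(b^{-1}ab)=\mathrm{supp}(a)\cup\mathrm{supp}(b)\subseteq\mathrm{supp}(P)$, so the conjugated letter also has support inside $P$, and the modified letters contribute edges only inside the single pair $P$ (the case of $\sigma_i^{-1}$ being symmetric).

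Finally I would close the argument by applying the first assertion to the Hurwitz-moved word as well. Since every unchanged letter contributes edges inside an $M_\ww$-twin pair and the two modified letters contribute edges inside the single pair $P$, all edges of $G_{\sigma_i^{\pm1}\ww}$ lie inside $M_\ww$-twin pairs; hence each of its connected components is contained in such a pair. But $M_{\sigma_i^{\pm1}\ww}$ is itself a perfect matching by the first assertion, so its classes have exactly two elements and must coincide with the $M_\ww$-twin pairs. Therefore $M_{\sigma_i^{\pm1}\ww}=M_\ww$, which is the asserted invariance.
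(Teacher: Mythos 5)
Your proof is correct and follows essentially the same route as the paper: even cycles are born in twin pairs by the type-(iii) covers of Corollary~\ref{cor:covers}, every letter's support lies in one odd cycle of $x$ or in the union of one twin pair, and letters attached to distinct pairs commute, so the Hurwitz action preserves the matching. You simply spell out the invariance step (the support identity under conjugation and the final ``both are perfect matchings, so they coincide'' argument) in more detail than the paper does.
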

\begin{proof}
	Consider the maximal chain from the identity to $x$ in $(\Alt_{N},\leqa)$ corresponding to $\ww\in\red_{\bth}(x)$.  By the description of covers in Corollary~\ref{cor:covers}, there must be $k$ occurrences of a cover of type \eqref{it:even}, and each creates a pair of even cycles (from a pair of odd cycles).  If $x_{i}\lessdot_{\bth}x_{i+1}$ is such a cover, then the $3$-cycle $x_{i}^{-1}x_{i+1}$ implies that the newly created pair of even cycles in $x_{i+1}$ is contained in some block of $M_\ww$.  Since the other possible covers of type \eqref{it:oddo} or \eqref{it:odde} merge three cycles (at most one of which is even), they are necessarily labeled by $3$-cycles whose support cannot involve elements from non-paired even cycles. This shows that all classes of $M_\ww$ have size $2$.
 
	By the analysis from the previous paragraph, the support of each $3$-cycle occurring in $\ww$ is included in the support of $\zeta\zeta'$ for a well-defined pair $\{\zeta,\zeta'\}\in M_\ww$, or in the support of an odd cycle of $x$. So letters corresponding to distinct pairs of $M_\ww$ commute, which entails that the Hurwitz action does not modify  $M_\ww$.
\end{proof}

Lemma~\ref{lem:matchings} effectively reduces the Hurwitz action to the case of two even cycles; we thus consider $x_{p,q}=(a_1\;a_2\;\ldots\;a_{2p})(b_1\;b_2\;\ldots\;b_{2q})\in\Alt_N$ as in Section~\ref{sec:enumeration_two_even_cycles}. 
The $3$-cycles below $x_{p,q}$ can be divided into two families: \defn{pure generators} which have the form $(a_i\;a_j\;a_k)$ or $(b_i\;b_j\;b_k)$ and \defn{mixed generators} of the form $(a_i\;a_j\;b_k)$ or $(a_i\;b_j\;b_k)$.  With these notations it is easily seen that in a pure generator we necessarily have $i<j<k$ and exactly one element of the sequence $j-i, k-j, i-k$ is even. For mixed generators $j-i$ must be odd in $(a_i\;a_j\;b_k)$, and $k-j$ must be odd in $(a_i\;b_j\;b_k)$.

\begin{definition}\label{def:parity}
	Given $x_{p,q}$ as above, we define the \defn{parity} of a mixed generator $(a_i\;a_j\;b_k)$ or $(a_i\;b_j\;b_k)$ to be the parity of $k-i$. 
\end{definition}

\begin{remark}\label{rem:mixed}
	It is important to notice that this notion of parity is not canonical and depends on a given specification of $x_{p,q}$. If we modify this specification either by swapping the two cycles, or by shifting cyclically the elements of one of the cycles, the notions of odd and even are exchanged. In any case the partition of mixed generators in two classes remains unchanged.
\end{remark}

\begin{lemma}\label{lem:mixed_generators}
	Let $x_{p,q}=(a_1\;a_2\;\ldots\;a_{2p})(b_1\;b_2\;\ldots\;b_{2q})\in\Alt_N$ as above, and let $\ww\in\red_{\bth}(x_{p,q})$. Then $\ww$ contains at least two mixed generators.  Moreover, all mixed generators in $\ww$ have the same parity.
\end{lemma}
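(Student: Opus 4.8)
The plan is to read a reduced word $\ww=t_1t_2\cdots t_{p+q}$ as the saturated chain $\id=y_0\lessdot_{\bth}y_1\lessdot_{\bth}\cdots\lessdot_{\bth}y_{p+q}=x_{p,q}$ with $y_m=y_{m-1}t_m$, and to control it through the number of even cycles of the $y_m$. By Corollary~\ref{cor:covers} a cover of type~\eqref{it:oddo} or~\eqref{it:odde} leaves this number unchanged, whereas a cover of type~\eqref{it:even} raises it by two; hence it is nondecreasing along the chain. As it equals $0$ at $\id$ and $2$ at $x_{p,q}$, precisely one cover, say $y_{m-1}\lessdot_{\bth}y_m$, is of type~\eqref{it:even}. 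Throughout write $A=\{a_1,\dots,a_{2p}\}$ and $B=\{b_1,\dots,b_{2q}\}$, and call a cycle \emph{straddling} if it meets both $A$ and $B$.

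The two even cycles created at step $m$ can afterwards only be enlarged by covers of type~\eqref{it:odde} and are never recombined, so they grow into the two cycles of $x_{p,q}$; thus one has its support in $A$ and the other in $B$. Tracing the type-\eqref{it:even} cover back through the case analysis in the proof of Proposition~\ref{prop:alternating_length}, the letter $t_m$ has exactly one entry in one of $A,B$ and two in the other, so $t_m$ is a mixed generator, and one of the cycles of $y_{m-1}$ involved in this cover already straddles $A$ and $B$. Before step $m$ there are no even cycles, so every earlier cover is of type~\eqref{it:oddo}, i.e. a merge of three cycles; such moves never destroy a straddling cycle, so the first cover that produces one merges three pure cycles lying on both sides and is therefore mixed. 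This yields at least two mixed generators, proving the first assertion.

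For the parity statement, exploit that every cycle of $x_{p,q}$ is even to fix a $2$-colouring $\chi\colon[N]\to\mathbb{Z}/2\mathbb{Z}$ with $\chi\bigl(x_{p,q}(v)\bigr)=\chi(v)+1$, namely the parity of the position of $v$ in its cycle. Checking the two shapes of Definition~\ref{def:parity} shows that the parity of a mixed generator $t$ equals the \emph{colour flip} $\chi(c)+\chi(t(c))$ at its unique letter $c$ with $c\in B$ and $t(c)\in A$. The engine of the proof is then the invariant that, for every $y\leqa x_{p,q}$, all $B$-to-$A$ crossings of $y$ (the pairs $c\mapsto y(c)$ with $c\in B$, $y(c)\in A$) share one colour flip $\theta(y)$. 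Since multiplying by $t_m$ merely rotates the three out-arcs at the letters of $t_m$, one propagates $\theta$ along the chain and matches, at each mixed cover, the generator's own parity with $\theta$; as $\theta$ is born with the first mixed generator and never changes, all mixed generators share the parity $\theta$.

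The main obstacle is exactly this invariant. Its preservation splits into a finite case analysis over the three cover types, and the delicate situations are those in which a cover merges or splits straddling cycles: there one must combine the cover constraints of Proposition~\ref{prop:summary_length} with the identity $\chi\circ x_{p,q}=\chi+1$ to rule out the creation of a crossing of the opposite flip, and to reconcile the intrinsic parity of the generator with the flip of the crossing it produces in the partial product. That passing below $x_{p,q}$ is what forbids mismatched flips is illustrated by $(1\;2\;5)(3\;4\;6)\not\leqa(1\;2\;3\;4)(5\;6\;7\;8)$, whose two straddling $3$-cycles carry opposite parities.
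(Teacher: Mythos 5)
Your first assertion is established correctly and along essentially the same lines as the paper: the saturated chain attached to $\ww$ contains exactly one cover of type \eqref{it:even}, whose label is necessarily mixed, and the straddling cycle present in the element just below that cover forces a second mixed letter earlier in the word. The extra care you take in locating the \emph{first} straddling cycle along the chain is correct and harmless.

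The parity statement is where the proposal has a genuine gap. You reduce everything to the invariant that all $B$-to-$A$ crossings of each partial product $y_l=t_1\cdots t_l$ carry a single colour flip $\theta$, and that at each mixed cover this flip agrees with the generator's parity; but you then declare this invariant to be ``the main obstacle'' and leave its verification as a case analysis you do not carry out. That verification is the entire content of the claim, and it is not routine, for two concrete reasons. First, the invariant cannot be propagated upward from $\id$: your own example $(1\;2\;5)(3\;4\;6)$ is reached from $\id$ by two perfectly legal covers of type \eqref{it:oddo} and already violates the invariant, so nothing local forbids the creation of a second straddling cycle with the opposite flip; the obstruction comes only from the requirement that the chain terminate at $x_{p,q}$, so the induction must run downward through the type-\eqref{it:even} cover and the subsequent splits, and none of that is set up. Second, matching the generator's parity with $\theta$ is not the near-tautology your phrasing suggests: when $t_l=(i\;j\;k)$ with $k\in B$ merges the cycle $\zeta_i$ of $y_{l-1}$ into a straddling cycle, the new crossing of $y_l$ is $k\mapsto\zeta_i(i)$ rather than $k\mapsto i=t_l(k)$, so its flip differs from the parity of $t_l$ by $\chi(i)+\chi(\zeta_i(i))$, which need not vanish for an arbitrary pure cycle $\zeta_i$ (e.g.\ a cycle of the shape $(a_1\;a_2\;a_3)$ attached at $a_1$). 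Ruling out such mismatches requires a further structural fact about which cycles can occur below $x_{p,q}$ and where a mixed letter may attach to them, which you neither state nor prove. For comparison, the paper avoids the global invariant altogether: it uses the Hurwitz action to bring the two mixed letters under comparison into the last two positions, writes the element preceding the last letter explicitly as a product of two odd cycles, and checks the congruence $t-r\equiv k-i\pmod 2$ in that single pinned-down configuration via Proposition~\ref{prop:summary_length}.
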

\begin{proof} 
	Consider the maximal chain from the identity to $x_{p,q}$ in $(\Alt_{N},\leqa)$ that corresponds to $\ww$. By the description of covers in Corollary~\ref{cor:covers}, this chain contains a unique occurrence of a cover of type~\eqref{it:even}, cover which is necessarily labeled by a mixed generator.  Now one of the two odd cycles joined in this cover contains elements of both cycles of $x_{p,q}$, and so not all generators below it can be pure, which accounts for at least one other mixed generator beside the one coming from the distinguished cover relation above.

	Now let us consider any two such mixed generators. Using the Hurwitz action,  we can assume that these generators occur in the last two positions of $\ww$.  Up to exchanging the two cycles of $x_{p,q}$, we can also assume that the mixed generator occurring in last position in $\ww$ has the form $(a_i\; a_j\; b_k)$; here $j-i$ is necessarily odd as noticed previously.  We shall also assume that $j<i$, the opposite case proceeds entirely similarly.  Define $x'=x_{p,q}(a_i\;a_j\;b_k)^{-1}=x_{p,q}(a_j\; a_i\; b_k)$, which is equal to 
	\begin{displaymath}
		(a_1\;a_2\;\ldots\;a_j\;a_{i+1}\;a_{i+2}\;\ldots\;a_{2p})(a_{j+1}\;a_{j+2}\;\ldots\;a_i\;b_{k+1}\;b_{k+2}\;\ldots b_{2q}\;b_1\;b_2\;\ldots\;b_k).
	\end{displaymath}
	We have $x'\lessdot x_{p,q}$ by definition, and $x'$ is a product of two odd cycles; only the second one of these cycles can be split into three odd cycles when multiplied by a mixed generator.  Denote by $(a_r\; a_s\; b_t)$ (or $(a_r\; b_s\; b_t)$) the mixed generator occurring second to last in $\ww$.   By an elementary but tedious distinction of cases (depending on how the splitting of the second odd cycle occurs when multiplied by this mixed generator), one shows that there always holds $t-r\equiv k-i \pmod 2$. This shows that the two mixed generators have the same parity. 
\end{proof}

We may now conclude the missing part of the proof of Proposition~\ref{prop:hurwitz_transitive}.

\begin{proof}[Proof of Proposition~\ref{prop:hurwitz_transitive} ($\Leftarrow$)]
	Let $x\in\Alt_N\setminus\Alto_{N}$, and let $2k\geq 2$ be its number of even cycles. To show that the Hurwitz action is not transitive, it is enough by Lemma~\ref{lem:matchings} to consider the case $k=1$, and pick $\ww\in\red_{\bth}(x)$.  If we act on $\ww$ by $\tau=\sigma_i^{\pm 1}$, then at most one letter is modified. By Lemma~\ref{lem:mixed_generators}, the resulting letter cannot be a mixed generator of the parity not already occurring in $\ww$. So in a Hurwitz orbit all occurring mixed generators have the same parity. 

	In order to conclude that we have at least two Hurwitz orbits, it suffices to exhibit two reduced factorizations of $x$, one containing even mixed generators, and one containing odd mixed generators.  This is quickly verified by considering the word
	\begin{multline*}
		\xx'=(a_{2}\;a_{3}\;a_{4})(a_{4}\;a_{5}\;a_{6})\cdots(a_{2p-2}\;a_{2p-1}\;a_{2p})\\
			(b_{2}\;b_{3}\;b_{4})(b_{4}\;b_{5}\;b_{6})\cdots(b_{2q-2}\;b_{2q-1}\;b_{2q}),
	\end{multline*}
	which consists only of pure generators. Now define
	\begin{align}
		\xx_{1} & = (a_{1}\;a_{2}\;b_{2})(a_{2}\;b_{2}\;b_{1})\xx' \label{eq:canonical_word_even_1},\quad\text{and}\\
		\xx_{2} & = (a_{2}\;a_{1}\;b_{2})(a_{1}\;b_{2}\;b_{1})\xx' \label{eq:canonical_word_even_2}.
	\end{align}
	We verify easily that both $\xx_{1}$ and $\xx_{2}$ are reduced factorizations of $x_{p,q}$, and the proof is complete.
%
\end{proof}

For the proof of Theorem~\ref{thm:main_hurwitz_orbits}, the two factorizations \eqref{eq:canonical_word_even_1} and \eqref{eq:canonical_word_even_2} will play a crucial role.  We have the following lemma.

\begin{lemma}\label{lem:technical}
	All pure generators and all mixed odd generators occur as first letters of words in the Hurwitz orbit of $\xx_{1}$.
\end{lemma}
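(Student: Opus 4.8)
Set $m=p+q=\la(x_{p,q})$ and write $\delta=(\sigma_1\sigma_2\cdots\sigma_{m-1})^m\in\Braid_m$ for the full twist. Exactly as in the single-cycle part of Proposition~\ref{prop:hurwitz_transitive}, $\delta$ acts on any word in $\red_{\bth}(x_{p,q})$ by conjugating each letter by $x_{p,q}$; since conjugation by $x_{p,q}$ sends a generator to the one obtained by shifting every $a$-index and every $b$-index forward by one (cyclically, modulo $2p$ and $2q$), the set of first letters occurring in the Hurwitz orbit of $\xx_1$ is closed under this simultaneous cyclic shift of indices. My plan is therefore to realize, for each shift-class, one pure generator and one odd mixed generator as a first letter, and then let $\delta$ generate the rest. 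Throughout, Lemma~\ref{lem:mixed_generators} provides a safety net: every word in the orbit of $\xx_1$ has all of its mixed generators odd (because $\xx_1$ does), so any mixed generator I produce as a first letter is automatically odd and I can never stray into even mixed generators.

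\emph{Pure generators.} Recall that $\xx_1=g_1g_2\,\xx'$ with $g_1g_2=(a_1\;a_2)(b_1\;b_2)$, and that $\xx'$ is a reduced word for $y=(a_2\;a_3\;\cdots\;a_{2p})(b_2\;b_3\;\cdots\;b_{2q})\in\Alto_N$. The operators $\sigma_3,\ldots,\sigma_{m-1}$ act only on the suffix $\xx'$ and realize the full Hurwitz action on $\red_{\bth}(y)$. Since $y\in\Alto_N$, the already-proved direction of Proposition~\ref{prop:hurwitz_transitive} makes this action transitive, so I may bring $\xx'$ to a reduced word beginning with any prescribed generator $t\leqa y$. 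Applying $\sigma_1^{-1}\sigma_2^{-1}$ then moves $t$ into first position at the cost of conjugating it by $g_1g_2$, so every generator $(g_1g_2)\,t\,(g_1g_2)^{-1}$ with $t\leqa y$ occurs as a first letter. Closing under the cyclic shifts coming from $\delta$, a direct combinatorial check (using the description of generators below an odd cycle from Proposition~\ref{prop:onc_characterization}) shows that these exhaust all pure generators below $x_{p,q}$.

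\emph{Odd mixed generators.} This is the genuinely new part, and where I expect the real work to lie. The word $\xx_1$ already carries the odd mixed generator $g_1=(a_1\;a_2\;b_2)$ in first position, and $\delta$ produces all of its simultaneous cyclic shifts $(a_{1+s}\;a_{2+s}\;b_{2+s})$; crucially, however, $\delta$ shifts the two cycles in lockstep and so cannot change the relative offset between the $a$-part and the $b$-part. To reach odd mixed generators of the remaining shapes, namely $(a_i\;a_j\;b_k)$ and $(a_i\;b_j\;b_k)$ with $k-i$ odd and arbitrary admissible gaps, I would imitate the explicit family $\ww_{k,j}$ from the single-cycle proof: starting from $\xx_1$, apply a controlled sequence of operators $\sigma_i^{\pm1}$ that successively widens the gaps inside the leading mixed generator while absorbing the displaced pure generators into the tail, and verify by induction on the gap size — mirroring the identity $\sigma_1^2\sigma_2\cdots\sigma_{k+j}\,\ww_{k,j}=\ww_{k,j+1}$ — that each intermediate word is Hurwitz-equivalent to $\xx_1$ with the desired generator in first position. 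The main obstacle is precisely the bookkeeping of these explicit words so that the braid moves interact correctly with both cycles at once and cover every admissible gap pattern; the parity safety net from Lemma~\ref{lem:mixed_generators} guarantees that everything produced is indeed an odd mixed generator, so once enough shapes are reached the shifts from $\delta$ fill in all the rest.
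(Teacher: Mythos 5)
Your treatment of the pure generators is correct and is a clean variant of what the paper does: you peel off the first \emph{two} letters of $\xx_1$, so that the remaining suffix represents $y=(a_2\;\cdots\;a_{2p})(b_2\;\cdots\;b_{2q})\in\Alto_N$, and you invoke the already-proved transitivity from Proposition~\ref{prop:hurwitz_transitive} together with the full-twist shift. The paper instead peels off only the first letter, obtaining the single long odd cycle $y_1=(a_1\;a_2\;b_2)^{-1}x_{p,q}=(a_2\;\cdots\;a_{2p}\;b_2\;\cdots\;b_{2q}\;b_1)$. That difference is not cosmetic: because $y_1$ is a single cycle on all of the support except $a_1$, the set of $3$-cycles below it (computed via Proposition~\ref{prop:onc_characterization}) already contains a large supply of \emph{mixed} generators, which the paper harvests for free; your $y$ is a product of two odd cycles with disjoint supports, so every generator below it is pure, and your reduction produces no mixed generators at all beyond the shift-orbit of $(a_1\;a_2\;b_2)$.

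This is where the genuine gap lies. You correctly observe that the full twist shifts the two cycles in lockstep and therefore cannot change the relative offset $k-i$ or the internal gaps of a mixed generator, so the shifts of $(a_1\;a_2\;b_2)$ realize only one very special family. For all remaining odd mixed generators you offer a plan (``imitate the family $\ww_{k,j}$\,'', ``the main obstacle is precisely the bookkeeping'') rather than an argument: no explicit braid words are produced and no induction is actually set up or verified. This is not a routine verification one can wave at --- in the paper it is the bulk of the proof, requiring a case split into four shapes of odd mixed generators and several explicitly constructed families of braid words ($\beta_{p,k}$, $\alpha_{p,k,j}$, $\beta_{k,j}$) whose action on $\xx_1$ must be computed and checked. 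The appeal to Lemma~\ref{lem:mixed_generators} as a ``safety net'' only guarantees that whatever you produce is odd; it does nothing to show that every odd mixed generator is actually produced. As written, the lemma is established only for pure generators and for the lockstep shifts of $(a_1\;a_2\;b_2)$.
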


The proof of this lemma is rather long and technical, so we withhold it for the moment.  Instead, we prove Theorem~\ref{thm:main_hurwitz_orbits}.

\begin{proof}[Proof of Theorem~\ref{thm:main_hurwitz_orbits}]
	In view of Lemma~\ref{lem:matchings} and Lemma~\ref{lem:mixed_generators} we see that for any matching $M$ of the even cycles of $x$, and any choice of parity for each of the $k$ pairs of even cycles, one obtains a Hurwitz-invariant set of factorizations.  This shows that the number of Hurwitz orbits of $\red_{\bth}(x)$ is at least $(2k-1)!!2^k=(2k)_k$. To show Theorem~\ref{thm:main_hurwitz_orbits}, we thus have to show that each of these Hurwitz-invariant sets is in fact a Hurwitz orbit.

	A byproduct of the proof of Lemma~\ref{lem:matchings} is that it is enough to do the case $k=1$ with no odd cycle:  this will transfer to the general case due to the fact that generators with disjoint supports commute, and that the Hurwitz action realizes this commutation for adjacent letters.

%

	We thus have to show that any $\ww\in\red_{\bth}(x_{p,q})$ is in the orbit of either $\xx_{1}$ or $\xx_{2}$ given by \eqref{eq:canonical_word_even_1} and \eqref{eq:canonical_word_even_2}, respectively.  
	


	If we denote by $\red^{o}_{\bth}(x_{p,q})$ the set of reduced factorizations of $x_{p,q}$ that contain odd mixed generators, then in view of Remark~\ref{rem:mixed} it suffices to show that $\red^{o}_{\bth}(x_{p,q})$ is contained in the Hurwitz orbit of $\xx_{1}$.
	
	Let $\ww=a\ww'$ be any element of $\red^o_{\bth}(x_{p,q})$.  By Lemma~\ref{lem:technical}, we know that there exists $a\xx'$ in the Hurwitz orbit of $\xx_1$. So we just have to show that $\xx'$ is in the Hurwitz orbit of $\ww'$.  Both words are certainly reduced factorizations of $a^{-1}x_{p,q}$. We need to distinguish two cases: if $a$ is mixed, then $a^{-1}x_{p,q}$ is a product of two odd cycles, and we know from Proposition~\ref{prop:hurwitz_transitive} that the Hurwitz action has just one orbit so we are done. If $a$ is pure, then $a^{-1}x_{p,q}$ is a product of two odd cycles and two even ones. One of the even cycles of $a^{-1}x_{p,q}$ is unchanged from $x_{p,q}$, and the other even cycle of $a^{-1}x_{p,q}$ is cut from the other cycle of $x_{p,q}$. This new cycle can be written in way that makes  $\xx'$ and $\ww'$ elements of $\red^o_{\bth}(a^{-1}x_{p,q})$ (because $\xx_1$ and $\ww$ are elements of  $\red^o_{\bth}(x_{p,q})$). By induction,  $\xx'$ and $\ww'$ are then Hurwitz equivalent and the proof is complete.
\end{proof}

\begin{remark}
	One can also study the Hurwitz action from a graph-theoretic point of view: for $x\in\Alt_{N}$,  define a graph on the set $\red_\bth(x)$, where there is an edge between two reduced factorizations if and only if one can be obtained from the other by the action of a Hurwitz operator. Such a Hurwitz graph (with vertex set $\red_\btw{(x)}$) was studied for $x$ a long cycle in $\Sym_{N}$ in \cite{adin14maximal}, and more recently in \cite{heller18generalized}.  (In fact, in \cite{heller18generalized} a slightly different graph is considered which happens to coincide with the Hurwitz graph in the case of $\Sym_{N}$.)  In a more general setting this graph appears in \cite{muehle17connectivity}.  
	
	A natural question is to study radius and diameter of these graphs.  While only lower bounds are known for the diameter, there are concrete results known for the radius.  For instance, if $x$ is a long cycle of $\Sym_{n+1}$, then \cite{adin14maximal}*{Theorem~9.3} establishes that the Hurwitz graph on $\red_\btw{(x)}$ has radius $\frac{(n-1)n}{2}$.  If $x$ is a long cycle of $\Sym_{2n+1}$, then we have verified by computer for $n\leq 6$ that the  Hurwitz graph on $\red_\bth{(x)}$ has radius $\frac{(n-1)(n+2)}{2}$, and we conjecture that this holds for all $n$.
	
	If $x=x_{p,q}\in\Alt_{2p+2q}$ is an element with two even cycles of lengths $2p$ and $2q$, then Theorem~\ref{thm:main_hurwitz_orbits} states that the corresponding Hurwitz graph has two connected components (which are mutually isomorphic).  By Proposition~\ref{prop:max_chains_two_even} each connected component has precisely $\frac{(p+q+1)!(2p)^{p}(2q)^{q}}{(p-1)!(q-1)!}$ vertices.  If we restrict our attention to one such component, then we have verified by computer that the radii in the case $p=1$ and $q\in\{1,2,3,4\}$ are $2,4,8,14$.
\end{remark}

We conclude this section with the proof of Lemma~\ref{lem:technical}.

\begin{proof}[Proof of Lemma~\ref{lem:technical}]
	Note that one can omit ``first'' in the  statement: if a letter occurs in the $i\text{th}$ position of a word $\ww$, then it occurs as the first letter of the word $\sigma_1\sigma_2\cdots\sigma_{i-1}\cdot\ww$. 

	Let $N=p+q$.  First of all, a simple computation shows that the braid word
	\begin{equation}\label{eq:shift_word}
		\gamma = \bigl(\sigma_{N-1}^{-1}\sigma_{N-2}^{-1}\cdots\sigma_{1}^{-1}\bigr)^{N}
	\end{equation}
	acts on any word in $\red_{\bth}(x_{p,q})$ by cyclically sending each letter $a_{i}$ to $a_{i+1}$ and each letter $b_{i}$ to $b_{i+1}$.  Another quick computation shows that
	\begin{displaymath}
		y_{1} := (a_{1}\;a_{2}\;b_{2})^{-1}x_{p,q} = (a_{2}\;a_{3}\;\ldots\;a_{2p}\;b_{2}\;b_{3}\;\ldots\;b_{2q}\;b_{1})\in\Alto_{2N}.
	\end{displaymath}
	In view of Proposition~\ref{prop:hurwitz_transitive}, this implies that any $\ww\in\red_{\bth}(x_{p,q})$ starting with $(a_{1}\;a_{2}\;b_{2})$ lies in the Hurwitz orbit of $\xx_{1}$.  Let $C_1$ denote the set of all $3$-cycles below $y_{1}$, which by Proposition \ref{prop:onc_characterization} is
	\begin{align*}
		C_1 & = \Bigl\{(a_{r}\;a_{s}\;a_{t}),(b_{r}\;b_{s}\;b_{t})\mid 1<r<s<t\;\text{and}\;s-r\;\text{odd},t-s\;\text{odd}\Bigr\}\\
		& \kern1cm \cup\Bigl\{(a_{r}\;a_{s}\;b_{t})\mid 1<r<s, 1<t\;\text{and}\;s-r\;\text{odd},t-s\;\text{even}\Bigr\}\\
		& \kern1cm \cup\Bigl\{(a_{r}\;b_{s}\;b_{t})\mid 1<r,1<s<t\;\text{and}\;s-r\;\text{even},t-s\;\text{odd}\Bigr\}\\
		& \kern1cm \cup\Bigl\{(b_{r}\;b_{s}\;b_{1})\mid 1<r<s\;\text{and}\;r\;\text{odd},s\;\text{even}\Bigr\}\\
		& \kern1cm \cup\Bigl\{(a_{r}\;a_{s}\;b_{1})\mid 1<r<s\;\text{and}\;r\;\text{even},s\;\text{odd}\Bigr\}\\
		& \kern1cm \cup\Bigl\{(a_{r}\;b_{s}\;b_{1})\mid 1<r,1<s\;\text{and}\;r,s\;\text{even}\Bigr\}.
	\end{align*}
	All pure generators with no $a_1$ in their support belong to $C_1$. By Hurwitz-transitivity of $\red_{\bth}(y_{1})$ we conclude that any $\ww\in\red^o_{\bth}(x)$ that starts with a pure generator lies in the Hurwitz orbit of $\xx_{1}$.
		
	The odd mixed generators fall into four categories:
	\begin{enumerate}[(I)]
		\item $(a_{r}\;b_{s}\;b_{t})$ for $1\leq r\leq 2p, 1\leq s<t\leq 2q$ and $t-s$ odd, $t-r$ odd;
		\item $(a_{r}\;b_{t}\;b_{s})$ for $1\leq r\leq 2p, 1\leq s<t\leq 2q$ and $t-s$ odd, $s-r$ odd;
		\item $(a_{r}\;a_{s}\;b_{t})$ for $1\leq r<s\leq 2p, 1\leq t\leq 2q$ and $s-r$ odd, $t-r$ odd;
		\item $(a_{s}\;a_{r}\;b_{t})$ for $1\leq r<s\leq 2p, 1\leq t\leq 2q$ and $s-r$ odd, $t-s$ odd.
	\end{enumerate}
	Those of type (I) and (III) (except for those containing $a_{1}$ and $b_{1}$) belong to $C_1$ and as before any $\ww\in\red^o_{\bth}(x)$ starting with one of these lies in the Hurwitz orbit of $\xx_{1}$.  Suitable application of $\gamma$ from \eqref{eq:shift_word} also produces those odd mixed generators of type (I) or (III), which contain $a_{1}$ or $b_{1}$.  It thus remains to consider odd mixed generators of type (II) and (IV).

	Let $\zeta=(a_{r}\;b_{s}\;b_{t})$ be an odd mixed generator of type (II), \ie $s-t$ is odd.  Without loss of generality we may assume that $r=1$.  (We may obtain the general case by suitable application of $\gamma$ from \eqref{eq:shift_word}.)  For arbitrary integers $i,j$ define the braid words
	\begin{align*}
		\omega_{i} & = \sigma_{1}^{-1}\sigma_{2}^{-2}\sigma_{3}\sigma_{4}\cdots\sigma_{i+1}\\
		\tau_{i,k} & = \sigma_{1}^{2}\sigma_{2}\cdots\sigma_{k-1}\sigma_{k}^{-2}\sigma_{k+1}\sigma_{k+2}\cdots\sigma_{i+k-1}.
	\end{align*}
	Fix $k\in\{2,3,\ldots,q\}$ and define
	\begin{equation*}
		\beta_{p,k} = \begin{cases}\omega_{p}, & \text{if}\;k=2,\\ \tau_{p,k}\tau_{p,k-1}\cdots\tau_{p,3}\omega_{p}, & \text{otherwise}.\end{cases}
	\end{equation*}
	We can then verify that 
	\begin{align*}
		\beta_{p,k}\cdot\xx_{1} & = (a_{1}\;b_{2k-1}\;b_{2})(a_{1}\;b_{2k-3}\;b_{2k-2})(a_{1}\;b_{2k-5}\;b_{2k-4})\cdots(a_{1}\;b_{3}\;b_{4})\\
		& \kern1cm (a_{1}\;a_{2}\;b_{2})(b_{1}\;b_{2k-1}\;b_{2k})(a_{2}\;a_{3}\;a_{4})(a_{4}\;a_{5}\;a_{6})\cdots(a_{2p-2}\;a_{2p-1}\;a_{2p})\\
		& \kern1cm (b_{2k}\;b_{2k+1}\;b_{2k+2})(b_{2k+2}\;b_{2k+3}\;b_{2k+4})\cdots(b_{2q-2}\;b_{2q-1}\;b_{2q}).
	\end{align*}
	Finally define for $k\in\{2,3,\ldots,q\}$ and $j\in\{0,1,\ldots,k-2\}$ the braid words
	\begin{align*}
		\mu_{k} & = \sigma_{1}^{2}\sigma_{2}\sigma_{3}\cdots\sigma_{k-2}\sigma_{k-1}^{2},\\
		\alpha_{p,k,j} & = \mu_{k}^{j}\beta_{p,k}.
	\end{align*}
	We obtain
	\begin{align*}
		\alpha_{p,k,j}\cdot\xx_{1} & = (a_{1}\;b_{2k-1}\;b_{2j+2})(b_{2j}\;b_{2j+1}\;b_{2k-1})(b_{2j-2}\;b_{2j-1}\;b_{2k-1})\cdots(b_{2}\;b_{3}\;b_{2k-1})\\
		& \kern1cm (a_{1}\;b_{2k-3}\;b_{2k-2})(a_{1}\;b_{2k-5}\;b_{2k-2})\cdots(a_{1}\;b_{3+2j}\;b_{4+2j})\\
		& \kern1cm (a_{1}\;a_{2}\;b_{2j+2})(b_{1}\;b_{2k-1}\;b_{2k})(a_{2}\;a_{3}\;a_{4})(a_{4}\;a_{5}\;a_{6})\cdots(a_{2p-2}\;a_{2p-1}\;a_{2p})\\
		& \kern1cm (b_{2k}\;b_{2k+1}\;b_{2k+2})(b_{2k+2}\;b_{2k+3}\;b_{2k+4})\cdots(b_{2q-2}\;b_{2q-1}\;b_{2q}),
	\end{align*}
	where the indices of the middle entries in the triples of the second line are supposed to form a decreasing sequence.  In particular, if $j=k-2$, then this line is supposed to be empty.  We conclude that any $\ww\in\red_{\bth}(x_{p,q})$ starting with an odd mixed generator of type (II) lies in the Hurwitz orbit of $\xx_{1}$.

	Let now $\zeta=(a_{s}\;a_{r}\;b_{t})$ be an odd mixed generator of type (IV), \ie $s-r$ is odd.  Without loss of generality assume that $t=2$.  (We may obtain the general case by suitable application of $\gamma$ from \eqref{eq:shift_word}.)  For arbitrary integers $i,j$ define the braid words
	\begin{align*}
		\mu_{i} & = \sigma_{i}^{2}\sigma_{i+1},\\
		\nu_{i} & = \sigma_{3}^{-1}\sigma_{4}^{-1}\cdots\sigma_{i+1}^{-1},\\
		\omega_{i} & = \mu_{i+1}\mu_{i}\cdots\mu_{2},\\
		\xi_{j,i} & = \nu_{j}\nu_{i}\nu_{i-1}\cdots\nu_{2}.
	\end{align*}
	If there is a choice of parameter that makes one of the occurring sequences non-monotone, then we define the corresponding word to be empty.  For instance $\nu_{1}$ is supposed to be empty.  For $0\leq j<k<p$ define yet another braid word
	\begin{displaymath}
		\beta_{k,j} = \sigma_{1}^{-1}\omega_{j}\sigma_{1}^{-1}\sigma_{2}^{-1}\xi_{k,j}.
	\end{displaymath}
	We obtain 
	\begin{align*}
		\beta_{k,j}\cdot\xx_{1} & = (a_{2k+1}\;a_{2j+2}\;b_{2})(a_{1}\;a_{2k+1}\;a_{2k+2})(a_{1}\;a_{2j}\;a_{2j+1})\cdots(a_{1}\;a_{2}\;a_{3})\\
		& \kern1cm (a_{2j+2}\;b_{2}\;b_{1})(a_{2j+2}\;a_{2j+3}\;a_{2j+4})\cdots(a_{2p-2}\;a_{2p-1}\;a_{2p})\\
		& \kern1cm (b_{2}\;b_{3}\;b_{4})(b_{4}\;b_{5}\;b_{6})\cdots(b_{2q-2}\;b_{2q-1}\;b_{2q}),
	\end{align*}
	and we conclude that any $\ww\in\red_{\bth}(x_{p,q})$ starting with an odd mixed generator of type (IV) lies in the Hurwitz orbit of $\xx_{1}$.  This concludes the proof.
\end{proof}

\section{Extensions}
	\label{sec:extensions}

\subsection{$m$-Divisible Noncrossing Partitions}
	\label{sec:m_divisible}
In the spirit of \cite{armstrong09generalized}, we can define a partial order on the set of multichains of $\penc_{2n+1}$ as follows.  Fix $m\geq 1$, and consider an $m$-multichain $C=(x_{1},x_{2},\ldots,x_{m})$.  The \defn{extended delta sequence} is $\delta_{o}(C)=(d_{0};d_{1},d_{2},\ldots,d_{m})$ of $\penc_{2n+1}$, where $d_{i}=x_{i}^{-1}x_{i+1}$ for $i\in\{0,1,\ldots,m\}$, as well as $x_{0}=\id$ and $x_{m+1}=c=(1\;2\;\ldots\;2n\!+\!1)$.  

We define a partial order on the set of $m$-multichains of $\penc_{2n+1}$ by setting $C\leq C'$ if and only if $d_{i}\geq_{\bth}d'_{i}$ for $i\in[m]$, where $\delta_{o}(C')=(d'_{0};d'_{1},d'_{2},\ldots,d'_{m})$.  Let us denote the resulting poset by $\penc_{2n+1}^{(m)}$. 

\begin{conjecture}
	For $n,m\geq 1$ the number of maximal chains of $\penc_{2n+1}^{(m)}$ is ${m^n(2n+1)^{(n-1)}}$.  
\end{conjecture}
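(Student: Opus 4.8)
The plan is to set up the delta-sequence model for $\penc_{2n+1}^{(m)}$ and then exhibit an explicit bijection between its maximal chains and the set $\red_\bth(c)\times[m]^{n}$, where $c=(1\;2\;\ldots\;2n\!+\!1)$; since $\bigl|\red_\bth(c)\bigr|=\bigl|\MaxChains(\penc_{2n+1})\bigr|=(2n+1)^{n-1}$ by Corollary~\ref{cor:enc_rk_and_chain_enumeration}, this yields the count $m^{n}(2n+1)^{n-1}$ at once. By Lemma~\ref{lem:multichains_onc} and the multichain--factorization dictionary recalled before it, an element of $\penc_{2n+1}^{(m)}$ is the same datum as a minimal factorization $c=\delta_{0}\delta_{1}\cdots\delta_{m}$ in which every factor $\delta_{i}$ lies in $\enc_{2n+1}$ (each $\delta_{i}\in\enc_{2n+1}$ because it is a sub-factor of $c$, via Corollary~\ref{cor:shifting_intervals} and Proposition~\ref{prop:onc_characterization}), the poset order comparing the tuples $(\delta_{1},\ldots,\delta_{m})$ componentwise for the reverse of $\leqa$. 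First I would verify that this poset is graded of rank $n$ with rank function $\la(\delta_{0})=n-\sum_{i\geq 1}\la(\delta_{i})$: the maximum is $(c;e,\ldots,e)$, the rank-$0$ elements are exactly those with $\delta_{0}=e$, and a direct computation shows that a cover $C\lessdot C'$ consists in deleting the last letter $s$ of a reduced word of one factor $\delta_{j}$ ($j\in[m]$), replacing it by $\delta_{j}'=\delta_{j}s^{-1}$, and appending the conjugate $s'=(\delta_{1}\cdots\delta_{j-1}\delta_{j}')\,s\,(\delta_{1}\cdots\delta_{j-1}\delta_{j}')^{-1}$ to $\delta_{0}$, so that $\delta_{0}\lessdot_{\bth}\delta_{0}s'=\delta_{0}'$.

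With this cover description in hand the bijection reads off directly. Along a maximal chain from a rank-$0$ element to the top, the factor $\delta_{0}$ grows by exactly one cover in $\penc_{2n+1}$ at each of the $n$ steps, so its successive values $e\lessdot_{\bth}s_{1}'\lessdot_{\bth}s_{1}'s_{2}'\lessdot_{\bth}\cdots\lessdot_{\bth}s_{1}'\cdots s_{n}'=c$ record a reduced word $(s_{1}',\ldots,s_{n}')\in\red_\bth(c)$; simultaneously, recording the slot $j_{k}\in[m]$ modified at the $k\th$ step produces a colour sequence $(j_{1},\ldots,j_{n})\in[m]^{n}$. The inverse map processes the data from the top downwards: starting from $(c;e,\ldots,e)$, at step $k$ one removes the rightmost letter $s_{k}'$ from $\delta_{0}$ and inserts its appropriate conjugate into the slot $\delta_{j_{k}}$, the conjugating element being forced by the requirement that the product stay equal to $c$. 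Granting that this reconstruction is well defined, it is visibly two-sided inverse to the extraction map, whence
\[
\bigl|\MaxChains(\penc_{2n+1}^{(m)})\bigr|=\bigl|\red_\bth(c)\bigr|\cdot m^{n}=(2n+1)^{n-1}m^{n}.
\]

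I expect the main obstacle to be precisely the two validity claims underpinning this inverse map. The first is that every cover of $\penc_{2n+1}^{(m)}$ really has the stated ``move one atom from slot $j$ into $\delta_{0}$'' form, so that the poset is graded of rank $n$ and all maximal chains have length $n$; this requires translating the componentwise order on delta sequences into the splitting/merging language of Corollary~\ref{cor:covers}. The second, more delicate, is that for an \emph{arbitrary} reduced word and an \emph{arbitrary} colour sequence the downward reconstruction never leaves the poset --- that is, any colouring is realizable for any reduced word, with each inserted conjugate keeping every $\delta_{i}$ a noncrossing product of odd cycles and every intermediate factorization minimal. Both rest on the stability of $\enc_{2n+1}$ under Kreweras complementation established in Proposition~\ref{prop:kreweras_invariance}, since prepending a conjugated $3$-cycle to a factor is exactly such an operation; the genuinely fiddly part is the bookkeeping of the conjugating elements as atoms migrate between slots. (The case $n=2,\ m=2$, where the formula predicts $20$ maximal chains, can be worked out entirely by hand: the poset has seven minimal elements and a unique maximum, one checks there are $20$ maximal chains, and one sees explicitly that every reduced word of $c$ occurs with each of the four colourings, confirming both the grading and the independence of the word from the colouring.)
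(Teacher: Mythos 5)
This statement is Conjecture~7.1 of the paper: the authors give no proof, so there is no argument of theirs to compare yours against. Your proposal therefore has to stand on its own, and I think it essentially does --- if the two gaps you flag are closed, you will have proved the conjecture, not merely reproved it.

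Both gaps close with one observation, and more easily than you fear. An element of $\penc_{2n+1}^{(m)}$ is exactly a factorization $\delta_{0}\delta_{1}\cdots\delta_{m}=c$ with $\sum_{i}\la(\delta_{i})=n=\la(c)$; as in the discussion preceding Lemma~\ref{lem:multichains_onc}, \emph{every} factor of such a minimal factorization automatically lies in $[\id,c]_{\bth}=\enc_{2n+1}$, so you never need to check the noncrossing/odd-cycle conditions by hand, and Proposition~\ref{prop:kreweras_invariance} is not needed. For the reconstruction: if you drop a final atom $t$ from $\delta_{0}$ and prepend the forced conjugate $\tilde{t}=(\delta_{1}\cdots\delta_{j-1})^{-1}t\,(\delta_{1}\cdots\delta_{j-1})$ to $\delta_{j}$, then $\la(\tilde{t}\delta_{j})\leq\la(\delta_{j})+1$, while subadditivity applied to the new factorization of $c$ gives $n\leq(\la(\delta_{0})-1)+\la(\tilde{t}\delta_{j})+\sum_{i\neq 0,j}\la(\delta_{i})$; these force $\la(\tilde{t}\delta_{j})=\la(\delta_{j})+1$, so the new tuple is again a minimal factorization and the step is a genuine cover. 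Thus any reduced word and any colouring are realizable, which is your surjectivity claim. The same interpolation argument (replace a single $d_{j}$ by any $d_{j}''$ with $d_{j}'\leq_{\bth}d_{j}''\lessdot_{\bth}d_{j}$ and recompute $\delta_{0}$) shows that a relation $C<C'$ with rank difference at least $2$ always admits an intermediate element, so all covers are single atom moves and the poset is graded of rank $n$; together with injectivity of the extraction map (the chain is reconstructed uniquely from the word and the colouring by downward induction) this completes the bijection with $\red_{\bth}(c)\times[m]^{n}$ and yields $m^{n}(2n+1)^{n-1}$. I would only ask you to write out these two verifications explicitly rather than deferring them, since they are the entire content of the proof; once that is done, you should present this as a theorem rather than a conjecture.
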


\begin{conjecture}\label{conj:mdiv_zeta}
	For $n,m\geq 1$, the zeta polynomial of $\penc_{2n+1}^{(m)}$ is 
	\begin{multline*}
		\ZetaPol_{\penc_{2n+1}^{(m)}}(q) =\\
			\frac{m(q-1)+1}{\bigl(2m(q-1)+1\bigr)n+m(q-1)+1}\binom{\bigl(2m(q-1)+1\bigr)n+m(q-1)+1}{n}.
	\end{multline*}
\end{conjecture}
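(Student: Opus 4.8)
The plan is to recognize the conjectured formula as a \emph{reindexing} of the zeta polynomial already computed in Theorem~\ref{thm:main_zeta_polynomial}. Writing $Q = m(q-1)+1$, so that $m(q-1) = Q-1$ and $2m(q-1)+1 = 2Q-1$, the denominator of the conjectured expression is $(2Q-1)n + Q = Q(2n+1)-n$, and the whole right-hand side collapses to
\begin{displaymath}
	\frac{Q}{Q(2n+1)-n}\binom{Q(2n+1)-n}{n} = \ZetaPol\bigl(\penc_{2n+1},Q\bigr).
\end{displaymath}
Since $\ZetaPol(\penc_{2n+1},Q)$ counts $(Q-1)$-multichains of $\penc_{2n+1}$, the conjecture is equivalent to the clean statement that the number of $(q-1)$-multichains of $\penc_{2n+1}^{(m)}$ equals the number of $m(q-1)$-multichains of $\penc_{2n+1}$. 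I would therefore prove this identity and then quote Theorem~\ref{thm:main_zeta_polynomial}. As a consistency check, extracting the coefficient of $q^{n}$ in this polynomial and multiplying by $n!$ recovers $m^{n}(2n+1)^{n-1}$, so the companion conjecture on the number of maximal chains of $\penc_{2n+1}^{(m)}$ would follow as an immediate corollary of a proof of this one.

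The heart of the argument is a straightening bijection, in the spirit of the generalized Fuss-type noncrossing posets of \cite{armstrong09generalized}. I would first translate both sides into factorizations: by the discussion preceding Lemma~\ref{lem:multichains_onc}, an $\ell$-multichain of $\penc_{2n+1}$ is the same as a minimal factorization $c = z_0 z_1 \cdots z_{\ell}$ of $c=(1\;2\;\ldots\;2n\!+\!1)$ with $\sum_i \la(z_i) = n$ and every factor in $\Alto_{2n+1}$. An element of $\penc_{2n+1}^{(m)}$ is itself an $m$-multichain of $\penc_{2n+1}$, hence a minimal factorization into $m+1$ such factors recorded by its delta sequence $(d_0;d_1,\ldots,d_m)$, and a $(q-1)$-multichain of $\penc_{2n+1}^{(m)}$ is a nested family of delta sequences whose inner components $d_1,\ldots,d_m$ weakly decrease in $\leqa$ as the multichain ascends. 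I would then flatten this nested family into a single minimal factorization of $c$ into $m(q-1)+1$ factors: the bottom delta sequence contributes its $m+1$ factors, and each of the remaining $q-2$ steps contributes the $m$ additional factors it produces, and I would check reversibility. The two features that make this plausible here are that $\Alto_{2n+1}$ is a lower ideal on which $\la = \tfrac{1}{2}\lt$ (Proposition~\ref{prop:AAo_first_properties}), so that minimality is a purely additive length condition, and that $C_{3,N}$ is conjugation-closed, so that the operations involved never leave $\Alto_{2n+1}$ (cf. Remark~\ref{rem:odd_below}).

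The main obstacle I anticipate is verifying that the flattening simultaneously respects length-additivity and preserves the odd-cycle condition at every intermediate stage. The order on $\penc_{2n+1}^{(m)}$ constrains only the inner components $d_1,\ldots,d_m$, so one must control the compensating behaviour of the distinguished component $d_0$ and show that the nesting never forces an even cycle to appear. An alternative, more computational route would bypass the bijection and parallel the proof of Theorem~\ref{thm:main_zeta_polynomial} directly: establish an analogue of Theorem~\ref{thm:enc_rs_chain_enumeration} counting $(q-1)$-multichains of $\penc_{2n+1}^{(m)}$ with a prescribed matrix of rank jumps, by applying the transitive-factorization formula of Goulden and Jackson to each refinement step, and then collapse the resulting iterated sum by repeated use of the multi-parameter Rothe--Hagen identity of Lemma~\ref{lem:multi_rothe_hagen}. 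This second approach stays entirely within the paper's toolkit, but the summation is likely to be considerably heavier than the single application that sufficed for Theorem~\ref{thm:main_zeta_polynomial}, which is why I would attempt the bijective reduction first and fall back on the Rothe--Hagen computation only if the straightening map proves recalcitrant.
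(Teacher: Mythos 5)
First, a point of reference: the paper offers no proof of this statement --- it is posed as an open conjecture in Section~\ref{sec:m_divisible} --- so your attempt can only be judged on its own merits. Your opening reduction is correct and genuinely useful: with $Q=m(q-1)+1$ the conjectured expression is exactly $\ZetaPol\bigl(\penc_{2n+1},Q\bigr)$ from Theorem~\ref{thm:main_zeta_polynomial}, so the conjecture is equivalent to the assertion that $(q-1)$-multichains of $\penc_{2n+1}^{(m)}$ are equinumerous with $m(q-1)$-multichains of $\penc_{2n+1}$, i.e.\ with minimal factorizations of $c$ into $m(q-1)+1$ factors. Your consistency check on the leading coefficient is also right, so a proof along these lines would indeed settle the companion conjecture on maximal chains as well (one should only note that the leading-coefficient fact is quoted in the paper for bounded posets, whereas $\penc_{2n+1}^{(m)}$ has several minimal elements; it still applies because every $(n+1)$-element chain in a graded poset of rank $n$ is maximal). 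This is precisely the shape of Armstrong's computation of the zeta polynomial of $NC^{(m)}(W)$, and your plan is the natural one.

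What you have written, however, is a plan rather than a proof, and the step you defer is the one carrying all the content. Two remarks. First, the obstacle you single out --- preserving the odd-cycle condition at intermediate stages --- is not actually an obstacle: every factor of a minimal factorization of $c$ is $\leqa c$ (each $z_i$ is a prefix of the Kreweras complement of $z_0\cdots z_{i-1}$, and Proposition~\ref{prop:generalized_kreweras} plus transitivity put it below $c$), hence lies in $\enc_{2n+1}\subseteq\Alto_{2n+1}$ by Proposition~\ref{prop:onc_characterization}; oddness takes care of itself. Second, the genuine difficulty is the one you dispatch in a single sentence: converting the $q-1$ nested delta sequences into one \emph{ordered} minimal factorization of $c$ of length $m(q-1)+1$ and back. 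For each $i\in[m]$ the column $d_i^{(1)}\geq_{\bth}\cdots\geq_{\bth}d_i^{(q-1)}$ yields $q-2$ quotients, but these live in different intervals and must be conjugated into position so that the concatenated word actually multiplies to $c$ with lengths adding; and the components $d_0^{(j)}$, which the order on $\penc_{2n+1}^{(m)}$ does not constrain directly, must be shown to organize themselves compatibly (in Armstrong's treatment this is a separate lemma, not a formality). Until the flattening map is written down explicitly, shown to land in the correct set, and inverted, the conjecture is not proved --- and the fact that the authors, who cite Armstrong, still state this as a conjecture suggests that this bookkeeping is exactly where the difficulty lies. Your fallback route via a rank-jump refinement of Theorem~\ref{thm:enc_rs_chain_enumeration} collapsed by Lemma~\ref{lem:multi_rothe_hagen} is equally legitimate in principle and equally unexecuted.
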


In general $\penc_{2n+1}^{(m)}$ is a graded poset with a greatest element, and several minimal elements.  Let $\widehat{\penc}_{2n+1}^{(m)}$ denote the poset that arises from $\penc_{2n+1}^{(m)}$ by adding a unique minimal element, and let $\overline{\penc}_{2n+1}^{(m)}$ denote the poset that arises from $\penc_{2n+1}^{(m)}$ by identifying all minimal elements.  We pose the following conjectures on the M{\"o}bius numbers of these modified posets.

\begin{conjecture}
	For $n,m\geq 1$ the M{\"o}bius number of $\widehat{\penc}_{2n+1}^{(m)}$ is given by 
	\begin{displaymath}
		(-1)^{n-1}\frac{m-1}{m(2n+1)-1}\binom{m(2n+1)-1}{n}.
	\end{displaymath}
\end{conjecture}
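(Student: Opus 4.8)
The plan is to reduce the statement to the zeta polynomial of $\penc_{2n+1}^{(m)}$ via a general fact about adjoining a bottom element, and then to carry out an elementary binomial simplification. Since $\penc_{2n+1}^{(m)}$ is graded with a greatest element and (possibly several) minimal elements, the poset $\widehat{\penc}_{2n+1}^{(m)}$ is bounded and graded, so $\mu(\widehat{\penc}_{2n+1}^{(m)})=\ZetaPol(\widehat{\penc}_{2n+1}^{(m)},-1)$. The first step is to observe that every $(q-1)$-multichain of $\widehat{\penc}_{2n+1}^{(m)}$ is an initial run of copies of the adjoined bottom element followed by a multichain of $\penc_{2n+1}^{(m)}$; sorting by the length of this run gives
\[
\ZetaPol\bigl(\widehat{\penc}_{2n+1}^{(m)},q\bigr)=\sum_{k=1}^{q}\ZetaPol\bigl(\penc_{2n+1}^{(m)},k\bigr),
\]
an identity of polynomials (the right-hand side is the summation of a polynomial, hence polynomial in $q$, and agrees with the zeta polynomial of $\widehat{\penc}_{2n+1}^{(m)}$ at all positive integers). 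It yields the difference relation $\ZetaPol(\widehat{\penc}_{2n+1}^{(m)},q)-\ZetaPol(\widehat{\penc}_{2n+1}^{(m)},q-1)=\ZetaPol(\penc_{2n+1}^{(m)},q)$, and since $\ZetaPol(P,1)=1$ for any finite poset $P$, one gets $\ZetaPol(\widehat{\penc}_{2n+1}^{(m)},1)=1$ and hence $\ZetaPol(\widehat{\penc}_{2n+1}^{(m)},0)=0$. Evaluating the difference relation once more at $q=0$ gives the clean identity
\[
\mu\bigl(\widehat{\penc}_{2n+1}^{(m)}\bigr)=\ZetaPol\bigl(\widehat{\penc}_{2n+1}^{(m)},-1\bigr)=-\,\ZetaPol\bigl(\penc_{2n+1}^{(m)},0\bigr).
\]

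Next I would substitute the formula of Conjecture~\ref{conj:mdiv_zeta} at $q=0$. Writing $c=m(2n+1)$, one has $m(q-1)+1=1-m$ and $(2m(q-1)+1)n+m(q-1)+1=n+1-c$ at $q=0$, so that
\[
\ZetaPol\bigl(\penc_{2n+1}^{(m)},0\bigr)=\frac{1-m}{\,n+1-c\,}\binom{n+1-c}{n}.
\]
The binomial has negative upper argument; applying $\binom{-a}{n}=(-1)^{n}\binom{a+n-1}{n}$ with $a=c-n-1$ rewrites it as $(-1)^{n}\binom{c-2}{n}$, and the one-line factorial identity $\frac{1}{c-n-1}\binom{c-2}{n}=\frac{1}{c-1}\binom{c-1}{n}$ (both sides equal $\frac{(c-2)!}{n!\,(c-1-n)!}$) produces exactly $(-1)^{n-1}\frac{m-1}{m(2n+1)-1}\binom{m(2n+1)-1}{n}$ once the sign from $\mu=-\ZetaPol(\penc_{2n+1}^{(m)},0)$ is inserted. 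The small cases $n=1$ ($\mu=0$ when $m=1$, and $\mu=1$ when $m=2$) serve as a reassuring check.

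The genuine obstacle is that this derivation is conditional on Conjecture~\ref{conj:mdiv_zeta}, which is itself unproven; an unconditional argument must first establish that zeta polynomial. I expect this to follow the template of the proof of Theorem~\ref{thm:main_zeta_polynomial}: set up the $m$-divisible analogue of Lemma~\ref{lem:multichains_onc}, identifying $(q-1)$-multichains of $\penc_{2n+1}^{(m)}$ with minimal transitive factorizations of $(1\;2\;\ldots\;2n+1)$ into $m(q-1)+1$ factors lying in $\Alto_{2n+1}$; count these for each fixed rank-jump vector by the Goulden--Jackson/Krattenthaler formula exactly as in Theorem~\ref{thm:enc_rs_chain_enumeration}; and sum over all rank-jump vectors using the multi-parameter Rothe--Hagen identity of Lemma~\ref{lem:multi_rothe_hagen} with $b=-1$ and $m(q-1)+1$ summands. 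The point I expect to require the most care is the bookkeeping of the grading: verifying that the order on $m$-multichains defined through the extended delta sequences $\delta_{o}$ refines correctly into factorization data, so that the rank-jump parameters map precisely onto the inputs of Lemma~\ref{lem:multi_rothe_hagen}.
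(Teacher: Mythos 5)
The statement you are asked to prove is posed in the paper as a conjecture, with no proof supplied, so there is nothing on the paper's side to compare your argument against. Your reduction is correct as far as it goes: the identity $\ZetaPol\bigl(\widehat{\Poset},q\bigr)=\sum_{k=1}^{q}\ZetaPol(\Poset,k)$ for a finite poset $\Poset$ with an adjoined bottom element, the consequence $\mu\bigl(\widehat{\Poset}\bigr)=-\ZetaPol(\Poset,0)$, and the binomial manipulation turning $\frac{1-m}{\,n+1-m(2n+1)\,}\binom{n+1-m(2n+1)}{n}$ into $(-1)^{n}\frac{m-1}{m(2n+1)-1}\binom{m(2n+1)-1}{n}$ all check out (I verified the sign conventions and the identity $\frac{1}{c-n-1}\binom{c-2}{n}=\frac{1}{c-1}\binom{c-1}{n}$, as well as the small cases $n=1$, $m\in\{1,2\}$). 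What you have genuinely established is the implication that Conjecture~\ref{conj:mdiv_zeta} implies the conjectured M\"obius number of $\widehat{\penc}_{2n+1}^{(m)}$. That is a worthwhile consistency result linking two of the paper's conjectures, but it is not a proof of the statement.

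The genuine gap, which you candidly flag yourself, is that Conjecture~\ref{conj:mdiv_zeta} is itself unproven, both in the paper and in your proposal, so the entire derivation is conditional. Your closing sketch of how one might prove it is not yet an argument: the crucial step --- identifying $(q-1)$-multichains of $\penc_{2n+1}^{(m)}$, ordered by reverse refinement of extended delta sequences, with minimal factorizations of $(1\;2\;\ldots\;2n+1)$ into $m(q-1)+1$ factors from $\Alto_{2n+1}$ in a way that makes the rank-jump data line up with the inputs of Lemma~\ref{lem:multi_rothe_hagen} --- is precisely where the difficulty lies, and you defer it. (The analogue for ordinary multichains, Lemma~\ref{lem:multichains_onc}, does not transfer verbatim because the order on $\penc_{2n+1}^{(m)}$ is not a prefix order.) Until that step is carried out, or Conjecture~\ref{conj:mdiv_zeta} is established by other means, the proposal remains a conditional reduction rather than a proof.
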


\begin{conjecture}
	For $n,m\geq 1$ the M{\"o}bius number of $\overline{\penc}_{2n+1}^{(m)}$ is given by 
	\begin{multline*}
		(-1)^{n}\left(\frac{m}{(m+1)(2n+1)-1}\binom{(m+1)(2n+1)-1}{n}\right.\\
			\left.-\frac{m-1}{m(2n+1)-1}\binom{m(2n+1)-1}{n}\right).
	\end{multline*}
\end{conjecture}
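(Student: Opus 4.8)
The plan is to reduce the conjecture to the preceding conjecture on $\mu(\widehat{\penc}_{2n+1}^{(m)})$ together with the \emph{already proven} zeta polynomial of $\penc_{2n+1}$ from Theorem~\ref{thm:main_zeta_polynomial}. The first ingredient I would isolate is a purely order-theoretic lemma, valid for any finite poset $P$ with greatest element $\hat 1$ and minimal elements $m_1,\dots,m_s$: writing $\widehat{P}$ for $P$ with a new bottom $\hat 0$ adjoined and $\overline{P}$ for $P$ with all $m_i$ identified into a single $\tilde 0$, one has
\begin{displaymath}
	\mu(\overline{P}) = \mu(\widehat{P}) + \sum_{i=1}^{s}\mu_{P}(m_i,\hat 1).
\end{displaymath}
I would prove this via Philip Hall's formula $\mu=\sum_k(-1)^k c_k$, where $c_k$ counts $k$-chains between the bounds. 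Since the $m_i$ are pairwise incomparable, every chain from $\hat 0$ to $\hat 1$ in $\widehat{P}$ either avoids all minimal elements—these are exactly the chains counted for $\overline{P}$—or begins with a unique $m_i$, and resumming the latter contributions over the open intervals $(m_i,\hat 1)$ recovers $\sum_i\mu_P(m_i,\hat 1)$ by Hall's formula applied inside each $[m_i,\hat 1]$.

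Applying this with $P=\penc_{2n+1}^{(m)}$, the term $\mu(\widehat{P})$ is the content of the preceding conjecture, so it remains to evaluate $\sum_i \mu_{\penc_{2n+1}^{(m)}}(m_i,\hat 1)$. Here I would first identify the minimal elements: an element can be lowered precisely when its free factor $d_0$ is nontrivial (split a $3$-cycle off $d_0$ and absorb it into $d_1$), so the minimal elements are exactly the delta sequences with $d_0=\id$, that is, the ordered minimal factorizations $c=d_1d_2\cdots d_m$ with all factors in $\Alto_{2n+1}$; by Lemma~\ref{lem:multichains_onc} these correspond to the multichains $\id=y_0\leqa y_1\leqa\cdots\leqa y_m=c$ of $\penc_{2n+1}$, where $y_i=d_1\cdots d_i$. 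The key structural step is then to establish the filter decomposition
\begin{displaymath}
	[m_i,\hat 1] \;\cong\; [\id,d_1]_{\bth}\times[\id,d_2]_{\bth}\times\cdots\times[\id,d_m]_{\bth}
\end{displaymath}
via $(d_0';d_1',\dots,d_m')\mapsto(d_1',\dots,d_m')$. Well-definedness and injectivity are immediate; for surjectivity one must check that whenever $d_i'\leqa d_i$ for all $i$, the product $d_1'd_2'\cdots d_m'$ still lies below $c$ with additive length, which I would prove by induction on $m$, using that the Kreweras complement is order-reversing (Proposition~\ref{prop:generalized_kreweras}) together with Corollary~\ref{cor:shifting_intervals}.

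Granting this decomposition, $\mu([m_i,\hat 1])=\prod_i\mu\bigl([\id,d_i]_{\bth}\bigr)=\prod_i\mu_{\penc_{2n+1}}(y_{i-1},y_i)$, the last equality by Corollary~\ref{cor:shifting_intervals}. Summing over minimal elements then turns the sum into a convolution power in the incidence algebra of $\penc_{2n+1}$:
\begin{displaymath}
	\sum_i \mu(m_i,\hat 1) = \sum_{\id=y_0\leqa\cdots\leqa y_m=c}\;\prod_{i=1}^{m}\mu_{\penc_{2n+1}}(y_{i-1},y_i) = \mu^{*m}(\id,c) = \ZetaPol_{\penc_{2n+1}}(-m),
\end{displaymath}
using $\mu=\zeta^{-1}$ and the fact that $\ZetaPol_{\penc_{2n+1}}(q)=\zeta^{q}(\id,c)$ extends polynomially to all integers. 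Finally I would substitute $q=-m$ into the formula of Theorem~\ref{thm:main_zeta_polynomial} and simplify, via $\binom{-M}{n}=(-1)^n\binom{M+n-1}{n}$ and $\binom{p}{n}=\tfrac{p}{p-n}\binom{p-1}{n}$, to recognize $\ZetaPol_{\penc_{2n+1}}(-m)$ as $-\mu(\widehat{\penc}_{2n+1}^{(m+1)})$. Combined with the lemma this gives the clean relation $\mu(\overline{\penc}_{2n+1}^{(m)})=\mu(\widehat{\penc}_{2n+1}^{(m)})-\mu(\widehat{\penc}_{2n+1}^{(m+1)})$, and inserting the conjectured value of $\mu(\widehat{\penc}_{2n+1}^{(m)})$ (for both $m$ and $m+1$) produces exactly the claimed expression.

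The main obstacle is the filter decomposition of the third paragraph: correctly pinning down the minimal elements and proving that the prefix conditions $d_i'\leqa d_i$ impose no hidden global constraint on the factorization is where the genuine combinatorics of $\penc_{2n+1}^{(m)}$ enters. Everything downstream—the incidence-algebra convolution, the evaluation at $q=-m$, and the binomial simplification matching the two terms of the statement—is then either formal or a direct appeal to Theorem~\ref{thm:main_zeta_polynomial}, while the only genuinely \emph{assumed} input is the preceding conjecture on $\mu(\widehat{\penc}_{2n+1}^{(m)})$.
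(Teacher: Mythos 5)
The first thing to say is that the paper offers no proof of this statement: it is posed explicitly as a conjecture (as is the preceding statement on $\mu\bigl(\widehat{\penc}_{2n+1}^{(m)}\bigr)$ that your argument takes as input), so there is no proof of record to compare yours against. What you have written is therefore not a proof of the conjecture but a conditional reduction: granting the conjectured value of $\mu\bigl(\widehat{\penc}_{2n+1}^{(m)}\bigr)$ for all $m$, the present formula follows. That reduction is, as far as I can check, correct and genuinely informative. The order-theoretic lemma $\mu(\overline{P})=\mu(\widehat{P})+\sum_{i}\mu_{P}(m_i,\hat 1)$ is right (Hall's formula splits the chains of $\widehat{P}$ according to whether they pass through a minimal element, and the sign bookkeeping works out as you say). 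The identification of the minimal elements of $\penc_{2n+1}^{(m)}$ with the delta sequences having $d_0=\id$, and the filter decomposition $[m_i,\hat 1]\cong\prod_{j}[\id,d_j]_{\bth}$, are the standard Armstrong-type facts; the surjectivity you flag as the main obstacle does hold, since writing $d_j=d_j'e_j$ with additive lengths and shuffling the $e_j$ to the front by conjugation (which preserves $\la$ because $C_{3,N}$ is closed under conjugation) shows that $d_0'=c(d_1'\cdots d_m')^{-1}$ has exactly the complementary length. The incidence-algebra step $\sum_i\mu(m_i,\hat 1)=\mu^{*m}(\id,c)=\ZetaPol_{\penc_{2n+1}}(-m)$ is Stanley's reciprocity, and the evaluation of Theorem~\ref{thm:main_zeta_polynomial} at $q=-m$ does simplify, via $\binom{-M}{n}=(-1)^n\binom{M+n-1}{n}$ and $\binom{p}{n}=\tfrac{p}{p-n}\binom{p-1}{n}$, to $(-1)^n\tfrac{m}{(m+1)(2n+1)-1}\binom{(m+1)(2n+1)-1}{n}$, which combines with the assumed value of $\mu\bigl(\widehat{\penc}_{2n+1}^{(m)}\bigr)$ to give exactly the claimed expression.

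The genuine gap is therefore not in any individual step but in the overall logical status: the only unproven input is precisely the preceding conjecture, which the paper also does not establish, so the statement remains open. What your argument really proves, unconditionally, is the identity $\mu\bigl(\overline{\penc}_{2n+1}^{(m)}\bigr)=\mu\bigl(\widehat{\penc}_{2n+1}^{(m)}\bigr)+\ZetaPol_{\penc_{2n+1}}(-m)$, which shows that the two M{\"o}bius-number conjectures of Section~\ref{sec:m_divisible} are not independent: the second follows from the first. That is a worthwhile observation and worth recording as such, but it should be presented as a reduction, not as a proof.
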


\subsection{Generation by $k$-Cycles}
	\label{sec:kcycles}
A fairly natural extension is to consider $k$-cycles instead of $3$-cycles for $k\geq 4$.  We may then ask which results of this paper can be generalized from $\leq_\bth$ to $\leq_{\mathbf{k}}$? The difficulty starts at the very beginning: to our knowledge, there is no simple way to express the associated length function $\ell_{\mathbf{k}}$.  In the paper \cite{herzog76representation}, the authors manage to find a complicated formula for $\ell_{\mathbf{4}}$, which hints at the increasing complexity of formulas for larger $k$. 
Thus it is arguably even harder to describe cover relations for the orders $\leq_\mathbf{k}$, and the rest of the general structure of the poset is probably even trickier.

There is, however, a certain subclass of permutations for which the results generalize. By writing a $k$-cycle as a product of $k-1$ transpositions, one has easily $(k-1)\ell_\mathbf{k}(x)\geq \lt(x)$.  Equality holds in the case described by the following easy proposition.

\begin{proposition}\label{prop:nice_k_length}
	A permutation $x$ satisfies $\ell_\mathbf{k}(x)= \frac{\lt(x)}{k-1}$ if and only if all its cycles have length congruent to $1$ modulo $k-1$.
\end{proposition}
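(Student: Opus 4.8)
The plan is to pivot everything on the elementary lower bound $(k-1)\ell_{\mathbf{k}}(x)\ge\lt(x)$ noted just above the statement (assuming throughout that $x$ lies in the group generated by $k$-cycles, so that $\ell_{\mathbf{k}}(x)$ is finite). First I would reformulate the equality condition. Writing a minimal factorization $x=a_1a_2\cdots a_m$ into $k$-cycles with $m=\ell_{\mathbf{k}}(x)$, the prefixes $p_i=a_1\cdots a_i$ satisfy $\lt(p_i)\le (k-1)i$ and $\lt(p_i^{-1}x)\le (k-1)(m-i)$ by unpacking each $k$-cycle into $k-1$ transpositions; combined with the triangle inequality $\lt(x)\le \lt(p_i)+\lt(p_i^{-1}x)$ this shows that the global equality $(k-1)m=\lt(x)$ forces $\lt(p_i)=(k-1)i$ for every $i$. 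Hence equality holds if and only if there is a $k$-cycle factorization in which each right multiplication $p_{i-1}\mapsto p_{i-1}a_i$ raises reflection length by the maximal amount $k-1$.

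The key technical step is the following local claim: for any $y\in\Sym_N$ and any $k$-cycle $a$, one has $\lt(ya)\le \lt(y)+(k-1)$, with equality if and only if $\operatorname{supp}(a)$ meets $k$ distinct cycles of $y$, in which case $ya$ is obtained from $y$ by merging those $k$ cycles into a single cycle whose length is the sum of their lengths. I would prove this by a direct cycle count: if $\operatorname{supp}(a)$ meets exactly $d$ cycles $\zeta_1,\dots,\zeta_d$ of $y$, then $ya$ agrees with $y$ outside $U=\bigcup_{j}\operatorname{supp}(\zeta_j)$ and restricts to some permutation $\pi$ of $U$, so $\cyc(ya)=(\cyc(y)-d)+\cyc(\pi)\ge \cyc(y)-d+1$, giving $\lt(ya)\le \lt(y)+(d-1)\le \lt(y)+(k-1)$. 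Equality throughout forces $d=k$ and $\cyc(\pi)=1$, which is exactly the merging statement. I expect this claim — specifically pinning down the equality case and the resulting cycle length — to be the main obstacle, though the counting above renders it routine.

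For necessity I would then run an induction along a tight factorization $e=p_0,p_1,\dots,p_m=x$ as above, with the invariant that every cycle of every $p_i$ has length $\equiv 1\pmod{k-1}$. This holds for $p_0=e$, all of whose cycles are fixed points of length $1$, and it is preserved: by the claim each step merges $k$ cycles, all $\equiv 1\pmod{k-1}$ by the inductive hypothesis, into one cycle of length $\equiv k\equiv 1\pmod{k-1}$, while the remaining cycles are untouched. Thus $x=p_m$ has all cycle lengths $\equiv 1\pmod{k-1}$.

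For sufficiency, suppose every cycle of $x$ has length $\ell\equiv 1\pmod{k-1}$, say $\ell=1+r(k-1)$. It suffices to factor each cycle into $r=(\ell-1)/(k-1)$ many $k$-cycles and concatenate, since cycles with disjoint supports contribute independently and the total number of factors is $\sum_{\text{cycles}}(\ell-1)/(k-1)=\lt(x)/(k-1)$, matching the lower bound. For a single $\ell$-cycle I would use the standard overlapping ``path'' factorization $(1\;2\;\ldots\;\ell)=\prod_{j=0}^{r-1}\bigl(1+j(k-1)\;\;2+j(k-1)\;\;\cdots\;\;k+j(k-1)\bigr)$, a telescoping product that is immediate to verify. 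This yields $\ell_{\mathbf{k}}(x)\le \lt(x)/(k-1)$, which together with the lower bound gives equality and completes the proof.
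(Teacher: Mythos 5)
The paper states this proposition without proof (it is introduced as ``the following easy proposition''), so there is no argument of the authors' to compare against; your write-up is a correct and complete proof. It follows exactly the route the paper's preceding sentence suggests: the lower bound $(k-1)\ell_{\mathbf{k}}(x)\geq\lt(x)$ from expanding each $k$-cycle into $k-1$ transpositions, the cycle-counting observation that a tight step must merge $k$ distinct cycles into one (which preserves the property that all cycle lengths are $\equiv 1\pmod{k-1}$), and the telescoping factorization of a cycle of length $1+r(k-1)$ into $r$ overlapping $k$-cycles for the converse --- so nothing further is needed.
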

 
Notice that for $k=3$ the elements occurring in Proposition~\ref{prop:nice_k_length} are precisely the elements of $\Alto$, and the results from Section~\ref{sec:alternating} concerning $\Alto$ can be extended to the analogous set for $k\geq 4$. 

Furthermore, the enumerative results of Sections \ref{sec:enumerative} and \ref{sec:bijective} can also be generalized.  The set of all elements $x$ in the subgroup of $\Sym_{(k-1)n+1}$ generated by all $k$-cycles, which satisfy $x\leq_{\mathbf{k}}\bigl(1\;2\;\ldots\;(k-1)n+1\bigr)$ is then in bijection with the set of pairs of $k$-ary trees which have a total of $n$ internal nodes.  Therefore, the cardinality of these sets is given by $\frac{2}{(k-1)n+2}\binom{kn+1}{n}$.  The zeta polynomial of the resulting poset is given by $\frac{q}{(q-1)(k-1)n+q}\binom{(q-1)(k-1)n+q+n-1}{n}$. These results and others can be found in the preprint \cite{muehle19kindivisible} of the authors together with Nathan Williams.

\subsection{Generalization to Coxeter Groups}
	\label{sec:coxeter}
Let $(W,S)$ be a finite irreducible Coxeter system of rank $n$, and let $T=\{w^{-1}sw\mid w\in W,s\in S\}$ denote the set of all reflections of $W$. The alternating group $\Alt(W)$ is the subset of $W$ of elements $w$ such that $\ell_S(w)$  (or, equivalently, $\ell_T(w)$)  is even. Now consider
\begin{equation}\label{eq:alt_coxeter_generators}
	A = \bigl\{w^{-1}stw\mid w\in W, s,t \in S, 3\leq m_{st}\leq \infty \bigr\}.
\end{equation}
This corresponds to the set of $3$-cycles for $W=\Sym_n$. 

In general {\em the set $A$ generates $\Alt(W)$}. In fact the subset $A'\subseteq A$ of elements $st$ with $m_{st}\geq 3$ already generates $\Alt(W)$:  since by definition $\Alt(W)$ is generated by all products $st$, we just need to show that when $m_{st}=2$ then $st$ is a product of elements of $A'$. But since $W$ is irreducible, for any such $s,t$ there is a path $s=s_0\to s_1\to \cdots \to s_k=t$ in the Coxeter graph, which means that $m_{s_{i-1}s_i}\geq 3$ for all $i$. Now $st=(s_0s_1)(s_1s_2)\cdots (s_{k-1}s_k)$ is a product of elements of $A'$ as wanted.

We can thus define $\ell_{A}$ to be the word length in $\Alt(W)$ with respect to $A$ and the $A$-prefix order $\leq_{A}$. The structural and enumerative questions that we dealt with in type $A$ can thus be studied for any finite Coxeter group.

In particular, recall that a \defn{Coxeter element} $c$ of $W$ is a product of any permutation of the Coxeter generators, and therefore has $\ell_{T}(c)=n$.  Hence $c\in\Alt(W)$ if and only if $W$ has even rank.  For any Coxeter element $c\in W$, we define the set
\begin{equation}\label{eq:enc_coxeter}
	\enc_{W}(c) = \{x\in\Alt(W)\mid x\leq_{A}c\},
\end{equation}
and we denote by $\penc_{W}(c)=\bigl(\enc_{W}(c),\leq_{A}\bigr)$ the corresponding poset. Since any two Coxeter elements $c,c'\in W$ are conjugate, the posets $\penc_{W}(c)$ and $\penc_{W}(c')$ are isomorphic. We are still missing a good combinatorial model for these posets, but early computations show that for instance in type $B$, the zeta polynomial of $\penc_{B_{2n}}$ factors nicely.

\begin{bibdiv}
\begin{biblist}

\bib{adin14maximal}{article}{
      author={Adin, Ron~M.},
      author={Roichman, Yuval},
       title={{On Maximal Chains in the Noncrossing Partition Lattice}},
        date={2014},
     journal={Journal of Combinatorial Theory (Series A)},
       pages={18\ndash 46},
      volume={125},
}

\bib{armstrong09generalized}{article}{
      author={Armstrong, Drew},
       title={{Generalized Noncrossing Partitions and Combinatorics of Coxeter
  Groups}},
        date={2009},
     journal={Memoirs of the American Mathematical Society},
      volume={202},
}

\bib{athanasiadis08absolute}{article}{
      author={Athanasiadis, Christos~A.},
      author={Kallipoliti, Myrto},
       title={{The Absolute Order on the Symmetric Group, Constructible
  Partially Ordered Sets and Cohen-Macaulay Complexes}},
        date={2008},
     journal={Journal of Combinatorial Theory (Series A)},
      volume={115},
       pages={1286\ndash 1295},
}

\bib{athanasiadis14absolute}{article}{
      author={Athanasiadis, Christos~A.},
      author={Roichman, Yuval},
       title={{The Absolute Order of a Permutation Representation of a Coxeter
  Group}},
        date={2014},
     journal={Journal of Algebraic Combinatorics},
      volume={39},
       pages={75\ndash 98},
}

\bib{baumeister17on}{article}{
      author={Baumeister, Barbara},
      author={Gobet, Thomas},
      author={Roberts, Kieran},
      author={Wegener, Patrick},
       title={{On the Hurwitz Action in Finite Coxeter Groups}},
        date={2017},
     journal={Journal of Group Theory},
      volume={20},
       pages={103\ndash 132},
}

\bib{benitzhak03graph}{article}{
      author={Ben-Itzhak, Tzachi},
      author={Teicher, Mina},
       title={{Graph Theoretic Method For Determining Hurwitz Equivalence in
  the Symmetric Group}},
        date={2003},
     journal={Israel Journal of Mathematics},
      volume={135},
       pages={83\ndash 91},
}

\bib{bessis03dual}{article}{
      author={Bessis, David},
       title={{The Dual Braid Monoid}},
        date={2003},
     journal={Annales Scientifiques de l'{\'E}cole Normale Sup{\'e}rieure},
      volume={36},
       pages={647\ndash 683},
}

\bib{bessis15finite}{article}{
      author={Bessis, David},
       title={{Finite Complex Reflection Arrangements are $K(\pi,1)$}},
        date={2015},
     journal={Annals of Mathematics},
      volume={181},
       pages={809\ndash 904},
}

\bib{biane96minimal}{article}{
      author={Biane, Philippe},
       title={{Minimal Factorizations of a Cycle and Central Multiplicative
  Functions on the Infinite Symmetric Group}},
        date={1996},
     journal={Journal of Combinatorial Theory (Series A)},
      volume={76},
       pages={197\ndash 212},
}

\bib{biane97some}{article}{
      author={Biane, Philippe},
       title={{Some Properties of Crossings and Partitions}},
        date={1997},
     journal={Discrete Mathematics},
      volume={175},
       pages={41\ndash 53},
}

\bib{bjorner80shellable}{article}{
      author={Bj{\"o}rner, Anders},
       title={{Shellable and Cohen-Macaulay Partially Ordered Sets}},
        date={1980},
     journal={Transactions of the American Mathematical Society},
      volume={260},
       pages={159\ndash 183},
}  

\bib{brady01partial}{article}{
      author={Brady, Thomas},
       title={{A Partial Order on the Symmetric Group and new $K(\pi,1)$'s for
  the Braid Groups}},
        date={2001},
     journal={Advances in Mathematics},
      volume={161},
       pages={20\ndash 40},
}

\bib{brenti08alternating}{article}{
      author={Brenti, Francesco},
      author={Reiner, Victor},
      author={Roichman, Yuval},
       title={{Alternating Subgroups of Coxeter Groups}},
        date={2008},
     journal={Journal of Combinatorial Theory (Series A)},
      volume={115},
       pages={845\ndash 877},
}

\bib{deligne74letter}{misc}{
      author={Deligne, Pierre},
       title={{Letter to Eduard Looijenga}},
        date={1974},
        note={Available at
  \url{http://homepage.rub.de/christian.stump/Deligne_Looijenga_Letter_09-03-1974.pdf}},
}

\bib{deutsch02diagonally}{article}{
      author={Deutsch, Emeric},
      author={Feret{\'i}c, Svjetlan},
      author={Noy, Marc},
       title={{Diagonally Convex Directed Polyominoes and Even Trees: a
  Bijection and Related Issues}},
        date={2002},
     journal={Discrete Mathematics},
      volume={256},
       pages={645\ndash 654},
}

\bib{hou08hurwitz}{article}{
      author={dong Hou, Xiang},
       title={{Hurwitz Equivalence in Tuples of Generalized Quaternion Groups
  and Dihedral Groups}},
        date={2008},
     journal={The Electronic Journal of Combinatorics},
      volume={15},
}

\bib{garside69braid}{article}{
      author={Garside, Frank~A.},
       title={{The Braid Group and Other Groups}},
        date={1969},
     journal={The Quarterly Journal of Mathematics},
      volume={20},
       pages={235\ndash 254},
}

\bib{gould56some}{article}{
      author={Gould, Henry~W.},
       title={{Some Generalizations of Vandermonde's Convolution}},
        date={1956},
     journal={The American Mathematical Monthly},
      volume={63},
       pages={84\ndash 91},
}

\bib{goulden92combinatorial}{article}{
      author={Goulden, Ian~P.},
      author={Jackson, David~M.},
       title={{The Combinatorial Relationship between Trees, Cacti and Certain
  Connection Coefficients for the Symmetric Group}},
        date={1992},
     journal={European Journal of Combinatorics},
      volume={13},
       pages={357\ndash 365},
}

\bib{goulden00transitive}{article}{
      author={Goulden, Ian~P.},
      author={Jackson, David~M.},
       title={{Transitive Factorizations in the Symmetric Group, and
  Combinatorial Aspects of Singularity Theory}},
        date={2000},
     journal={European Journal of Combinatorics},
      volume={21},
       pages={1001\ndash 1016},
}

\bib{graham94concrete}{book}{
      author={Graham, Ronald~L.},
      author={Knuth, Donald~E.},
      author={Patashnik, Oren},
       title={{Concrete Mathematics}},
     edition={2},
   publisher={Addison-Wesley},
        date={1994},
}

\bib{heller18generalized}{article}{
      author={Heller, Julia},
      author={Schwer, Petra},
       title={{Generalized Non-Crossing Partitions and Buildings}},
        date={2018},
     journal={The Electronic Journal of Combinatorics},
      volume={25},
       pages={Research paper P1.24, 30 pages},
}

\bib{herzog76representation}{article}{
      author={Herzog, Marcel},
      author={Reid, Kenneth~B.},
       title={{Representation of Permutations as Products of Cycles of Fixed
  Length}},
        date={1976},
     journal={Journal of the Australian Mathematical Society (Series A)},
      volume={22},
       pages={321\ndash 331},
}

\bib{huang17absolute}{article}{
      author={Huang, Jia},
      author={Lewis, Joel~B.},
      author={Reiner, Victor},
       title={{Absolute Order in General Linear Groups}},
        date={2017},
     journal={Journal of the London Mathematical Society},
      volume={95},
       pages={223\ndash 247},
}

\bib{hurwitz91ueber}{article}{
      author={Hurwitz, Adolf},
       title={{Ueber Riemann'sche Fl{\"a}chen mit gegebenen
  Verzweigungspunkten}},
        date={1891},
     journal={Math. Ann.},
      volume={39},
       pages={1\ndash 60},
}

\bib{krattenthaler10decomposition}{article}{
      author={Krattenthaler, Christian},
      author={M\"uller, Thomas~W.},
       title={{Decomposition Numbers for Finite Coxeter Groups and Generalised
  Non-Crossing Partitions}},
        date={2010},
     journal={Transactions of the American Mathematical Society},
      volume={362},
       pages={2723\ndash 2787},
}

\bib{kreweras72sur}{article}{
      author={Kreweras, Germain},
       title={Sur les partitions non crois\'ees d'un cycle},
        date={1972},
     journal={Discrete Mathematics},
      volume={1},
       pages={333\ndash 350},
}

\bib{lando04graphs}{book}{
      author={Lando, Sergei~K.},
      author={Zvonkin, Alexander~K.},
       title={{Graphs on Surfaces and their Applications}},
   publisher={Springer},
     address={Berlin},
        date={2004},
      volume={141},
}

\bib{mitsuhashi01the}{article}{
      author={Mitsuhashi, Hideo},
       title={{The $q$-Analogue of the Alternating Group and its
  Representations}},
        date={2001},
     journal={Journal of Algebra},
      volume={240},
       pages={535\ndash 558},
}

\bib{muehle17connectivity}{article}{
      author={M{\"u}hle, Henri},
      author={Ripoll, Vivien},
       title={{Connectivity Properties of Factorization Posets in Generated
  Groups}},
        date={2017},
      eprint={arXiv:1710.02063},
}

\bib{muehle19kindivisible}{article}{
      author={M{\"u}hle, Henri},
      author={Nadeau, Philippe},
      author={Williams, Nathan},
       title={{$k$-Indivisible Noncrossing Partitions}},
        date={2019},
      eprint={arXiv:1904.05573},
}

\bib{regev04permutation}{article}{
      author={Regev, Amitai},
      author={Roichman, Yuval},
       title={{Permutation Statistics on the Alternating Group}},
        date={2004},
     journal={Advances in Applied Mathematics},
      volume={33},
       pages={676\ndash 709},
}

\bib{rotbart11generator}{article}{
      author={Rotbart, Aviv},
       title={{Generator Sets for the Alternating Group}},
        date={2011},
     journal={S{\'e}minaire Lotharingien de Combinatoire},
      volume={65},
}

\bib{sia09hurwitz}{article}{
      author={Sia, Charmaine},
       title={{Hurwitz Equivalence in Tuples of Dihedral Groups, Dicyclic
  Groups, and Semidihedral Groups}},
        date={2009},
     journal={The Electronic Journal of Combinatorics},
      volume={16},
}

\bib{simion00noncrossing}{article}{
      author={Simion, Rodica},
       title={{Noncrossing Partitions}},
        date={2000},
     journal={Discrete Mathematics},
      volume={217},
       pages={397\ndash 409},
}

\bib{sloane}{misc}{
      author={Sloane, Neil J.~A.},
       title={{The Online Encyclopedia of Integer Sequences}},
        note={\url{http://www.oeis.org}},
}

\bib{stanley01enumerative_vol2}{book}{
      author={Stanley, Richard~P.},
       title={{Enumerative Combinatorics, Vol. 2}},
   publisher={Cambridge University Press},
     address={Cambridge},
        date={2001},
}

\bib{stanley11enumerative_vol1}{book}{
      author={Stanley, Richard~P.},
       title={{Enumerative Combinatorics, Vol. 1}},
     edition={2},
   publisher={Cambridge University Press},
     address={Cambridge},
        date={2011},
}

\bib{wegener20on}{article}{
      author={Wegener, Patrick},
       title={{On the Hurwitz Action in Affine Coxeter Groups}},
        date={2020},
     journal={Journal of Pure and Applied Algebra},
      volume={224},
        pages={Article 106308}
}

\end{biblist}
\end{bibdiv}

\end{document}